\newcommand{\enorm}[2][]{#1|\!#1|\!#1|\,#2\,#1|\!#1|\!#1|}
\newcommand{\norm}[3][]{#1\|#2#1\|_{#3}}
\newcommand{\set}[3][\big]{#1\{#2\,:\,#3#1\}}
\def\reff#1#2{\!\stackrel{\eqref{#1}}{#2}\!}
\newcounter{statement}
\newenvironment{statement}[2][!]{%
	\vskip3mm
	\hrule
	\hrule
	\hrule
	\vskip1mm
	\noindent%
	\refstepcounter{statement}%
	\bf#2~\thestatement%
	\ifthenelse{\equal{#1}{!}}{.\ }{~(#1).\ }%
	\it%
}{%
	\vskip1mm
	\hrule
	\hrule
	\hrule
	\vskip2mm
}
\newenvironment{theorem}[1][!]{\begin{statement}[#1]{Theorem}}{\end{statement}}
\newenvironment{lemma}[1][!]{\begin{statement}[#1]{Lemma}}{\end{statement}}
\newenvironment{proposition}[1][!]{\begin{statement}[#1]{Proposition}}{\end{statement}}
\newenvironment{remark}[1][!]{\begin{statement}[#1]{Remark}}{\end{statement}}
\newenvironment{algorithm}[1][!]{\begin{statement}[#1]{Algorithm}}{\end{statement}}
\def\@seccntformat#1{\vspace*{-2mm}\newline\hspace*{4mm}%
  \protect\textup{\protect\@secnumfont
    \ifnum\pdfstrcmp{subsection}{#1}=0 \bfseries\fi
    \csname the#1\endcsname
    \protect\@secnumpunct
  }%
}
\def\paragraph{\@startsection{paragraph}{4}%
  \z@\z@{-\fontdimen2\font}%
  {\normalfont\bfseries}}
\def\section{\@startsection{section}{1}%
\z@{.7\linespacing\@plus\linespacing}{.5\linespacing}%
{\normalsize\scshape\bfseries\centering}}
\renewcommand{\@secnumfont}{\bfseries}
\newcommand{\R}{\mathbb{R}}
\newcommand{\N}{\mathbb{N}}
\newcommand{\abs}[2][]{#1|\,#2\,#1|}
\newcommand{\seminorm}[3][]{#1|{#2}#1|_{#3}}
\newcommand{\setpower}[1]{\# #1}
\renewcommand{\d}[1]{\mathrm{\, d}#1}
\let\div\relax
\DeclareMathOperator{\div}{div}
\newcommand{\jump}[1]{\left[\!\left[#1\right]\!\right]}
\newcommand{\area}[1]{\Omega(#1)}
\newcommand{\transfer}[2]{\mathcal{Q}_{#2,#1}}
\newcommand{\energy}{\mathbb{E}}
\def\coarse{H}
\def\fine{h}
\def\EE{\mathcal{E}}
\def\PP{\mathcal{P}}
\def\UU{\mathcal{U}}
\def\VV{\mathcal{V}}
\def\LL{\mathcal{L}}
\def\MM{\mathcal{M}}
\def\SS{\mathcal{S}}
\def\TT{\mathcal{T}}
\def\T{\mathbb{T}}
\def\normal{\boldsymbol{\nu}}
\def\f{\boldsymbol{f}}
\def\g{\boldsymbol{g}}
\newcommand{\refine}[2]{\ifthenelse{\equal{#2}{}}{\mathrm{refine}(#1)}{\mathrm{refine}(#1;#2)}}
\newcommand{\tail}[2]{\mathrm{tail}_{#2}(#1)}
\def\coarse{H}
\def\fine{h}
\newcommand{\est}{\eta}
\newcommand{\osc}{\mathrm{osc}}
\newcommand{\data}{\varrho}
\newcommand{\Csz}{C_{\rm sz}}
\newcommand{\Cest}{C_{\rm est}}
\newcommand{\Cerr}{C_{\rm err}}
\newcommand{\Cshapereg}{C_{\rm mesh}}
\newcommand{\Crel}{C_\mathrm{rel}}
\newcommand{\Ceff}{C_\mathrm{eff}}
\newcommand{\Cmesh}{C_\mathrm{mesh}}
\newcommand{\Clde}{C_{\rm diam}}
\newcommand{\Cmark}{C_\mathrm{mark}}
\newcommand{\Cmin}{C_\mathrm{min}}
\newcommand{\Cdrel}{C_\mathrm{drel}}
\newcommand{\Cdeff}{C_\mathrm{deff}}
\newcommand{\refstack}[3][]{#1\stackrel{\mathclap{#2}}{#3}#1}
\def\reff#1#2{\stackrel{\eqref{#1}}{#2}}
\newcommand*\patchAmsMathEnvironmentForLineno[1]{%
	\expandafter\let\csname old#1\expandafter\endcsname\csname #1\endcsname
	\expandafter\let\csname oldend#1\expandafter\endcsname\csname end#1\endcsname
	\renewenvironment{#1}%
	{\linenomath\csname old#1\endcsname}%
	{\csname oldend#1\endcsname\endlinenomath}}%
\newcommand*\patchBothAmsMathEnvironmentsForLineno[1]{%
	\patchAmsMathEnvironmentForLineno{#1}%
	\patchAmsMathEnvironmentForLineno{#1*}}%
\title{Instance-optimal goal-oriented adaptivity}
\author{Michael Innerberger}
\address{TU Wien, Institute for Analysis and Scientific Computing, Wiedner Hauptstr.~8--10/E101/4, 1040 Wien, Austria}
\email{Michael.Innerberger@asc.tuwien.ac.at}
\author{Dirk Praetorius}
\address{TU Wien, Institute for Analysis and Scientific Computing, Wiedner Hauptstr.~8--10/E101/4, 1040 Wien, Austria}
\email{Dirk.Praeorius@asc.tuwien.ac.at}
\keywords{Adaptive finite element method, goal-oriented algorithm, quantity of interest, maximum marking strategy, convergence, instance optimality}
\subjclass[2010]{65N30, 41A25, 65N12, 65N50}
\thanks{{\bf Acknowledgement:} The authors acknowledge support through the Austrian Science Fund (FWF) through the doctoral school \emph{Dissipation and dispersion in nonlinear PDEs} (grant W1245), the special research program \emph{Taming complexity in PDE systems} (grant SFB F65), and the stand-alone project \emph{Optimal adaptivity for BEM and FEM-BEM coupling} (grant P27005).}
\date{\today}
\begin{document}
	
\begin{abstract}
We consider an adaptive finite element method with arbitrary but fixed polynomial degree $p \ge 1$, where adaptivity is driven by an edge-based residual error estimator.
Based on the modified maximum criterion from~{\rm[}Diening et al, \emph{Found.\ Comput.\ Math.} 16, 2016{\rm]}, we propose a goal-oriented adaptive algorithm and prove that it is instance optimal.
More precisely, the goal-error is bounded by the product of the total errors (being the sum of energy error plus data oscillations) of the primal and the dual problem, and the proposed algorithm is instance optimal with respect to this upper bound.
Numerical experiments underline our theoretical findings.
\end{abstract}
\maketitle


\section{Introduction}
\label{sec:intro}

\subsection{Rate optimality vs.\ instance optimality of AFEM}\label{section:intro:versus}
For an elliptic PDE with sought solution $u \in H^1_0(\Omega)$, 
the adaptive finite element method (AFEM) iterates the loop
\begin{align}\label{eq:intro:semr}
 \boxed{\textsl{\sl\,\,SOLVE\,\,}}
 \quad \longrightarrow \quad
 \boxed{\textsl{\,\,ESTIMATE\,\,}}
 \quad \longrightarrow \quad
 \boxed{\textsl{\sl\,\,MARK\,\,}}
 \quad \longrightarrow \quad
 \boxed{\textsl{\sl\,\,REFINE\,\,}}
\end{align}
to successively adapt an initial mesh $\TT_0$ to the possible singularities of the sought solution $u$. This leads to a sequence of meshes $\TT_\ell$ and corresponding discrete solutions $u_\ell \in \SS^p_0(\TT_\ell) \subseteq H_0^1(\Omega)$ being $\TT_\ell$-piecewise polynomials of degree $p \geq 1$.
In the last decades, the mathematical understanding of AFEM has matured. We refer to~\cite{doerfler1996,mns2000,bdd04,stevenson2007,ckns2008,ffp2014} for some milestones of the analysis of convergence with optimal algebraic rates, to~\cite{msv2008,siebert2011} for a general theory on (plain) convergence, and to~\cite{axioms} for an abstract approach for optimal convergence rates. 

The works~\cite{stevenson2007,ckns2008,ffp2014,axioms} aim for rate optimality of adaptive algorithms, i.e., they prove that the usual adaptive loop~\eqref{eq:intro:semr}, based on (quasi-) minimal D\"orfler marking~\cite{doerfler1996} for the step {\sl MARK}, leads to optimal decay of the \emph{total error}
\begin{align}\label{eq:intro:total_error}
 {\rm error}(\TT_\coarse) := \enorm{u - u_\coarse} + \osc_\coarse(\TT_\coarse),
\end{align}
being the sum of energy error $\enorm{u - u_\coarse}$ plus data oscillation terms (or, equivalently, the error estimator, since $\eta_\coarse \simeq {\rm error}(\TT_\coarse)$). Let $\#(\cdot)$ denote the number of elements of a finite set. Then, rate optimality reads as follows: For all $s > 0$, it holds that
\begin{align}\label{eq:intro:rate}
 \norm{u}{\mathbb{A}_s} < \infty
 \quad \Longrightarrow \quad
 \exists \, C > 0 \, 
 \forall \, \ell \in \N_0 : \quad 
 {\rm error}(\TT_\ell) \le C \, (\#\TT_\ell)^{-s},
\end{align}
where
\begin{equation*}
	\norm{u}{\mathbb{A}_s}
	=
	\sup\limits_{\varepsilon > 0} \min\limits_{\TT_\coarse \in \T_\varepsilon}
	\big[ \varepsilon \, (\#\TT_\coarse - \#\TT_0)^s \big]
	< \infty
	\quad \text{with} \quad
	\TT_\varepsilon
	:=
	\set{\TT_\coarse \in \T}{\mathrm{error}(\TT_\coarse) \leq \varepsilon}
\end{equation*}%
corresponds to certain approximation properties of $u$ (which can be characterized in terms of Besov regularity~\cite{bddp2002,gm2014,gantumur2017}).

Unlike this, the first work~\cite{bdd04} on AFEM with convergence rates implicitly proved even instance optimality of AFEM, i.e., the total error on an adaptive mesh is quasi-optimal with respect to all refinements of $\TT_0$, which have essentially the same number of elements: 
It holds that
\begin{align}\label{eq:intro:instance}
\begin{split}
 \exists \, C > 1 \, 
 \forall \, \ell \in \N_0 \, 
 \forall \, \TT_\coarse \in {\rm refine}(\TT_0) : \quad
 &\big[ C \, \#\TT_\coarse \le \#\TT_\ell \, \Longrightarrow \,
 {\rm error}(\TT_\ell)
 \le C \, {\rm error}(\TT_\coarse) \big].
\end{split}
\end{align}
The key argument for the proof of~\eqref{eq:intro:instance} in~\cite{bdd04} is an additional coarsening step in the adaptive loop~\eqref{eq:intro:semr}. While~\cite{bdd04} also employed the D\"orfler marking criterion~\cite{doerfler1996} for the step {\sl MARK}, the work~\cite{dks16} proposed a modified maximum criterion to single out edges for refinement. For P1-AFEM for the 2D Poisson problem,~\cite{dks16} then proved instance optimality~\eqref{eq:intro:instance} of their adaptive strategy without resorting to an additional coarsening step. In~\cite{ks2016}, their analysis was extended to AFEM with non-conforming P1 elements for the Poisson problem and the Stokes system in 2D. We stress that any instance-optimal AFEM~\eqref{eq:intro:instance} is, in particular, rate optimal~\eqref{eq:intro:rate}.

\subsection{Goal-oriented adaptivity}\label{section:intro:goafem}
While standard adaptivity aims to approximate the PDE solution $u \in H^1_0(\Omega)$ by some discrete approximation $u_\ell \in \SS^p_0(\TT_\ell)$ in the energy norm, a goal-oriented adaptive finite element method (GOAFEM)  aims only to approximate $G(u)$ by $G(u_\ell)$, where $G : H^1_0(\Omega) \to \R$ is the so-called \emph{goal functional} or \emph{quantity of interest}.

In the present paper, we consider the following problem: Let $\Omega \subset \R^2$ be a polygonal Lipschitz domain, which is resolved by the initial mesh $\TT_0$, where $\TT_0$ is admissible in the sense of \cite{bdd04, nvb}. Given $f, g \in L^2(\Omega)$ and $\f, \g \in [L^2(\Omega)]^2$, the (linear) goal functional $G \in H^{-1}(\Omega)$ reads
\begin{align}\label{eq:intro:goal}
 G(u) := \int_\Omega g u - \g \cdot \nabla u \d{x},
\end{align}
where $u \in H^1_0(\Omega)$ is the unique solution to 
\begin{subequations}\label{eq:intro:primal}
\begin{align}
 \hspace*{20mm} -\div(A\nabla u) &= f + \div \f &&\hspace*{-25mm} \textrm{ in } \Omega, \hspace*{10mm} \\
		u &= 0  &&\hspace*{-25mm} \textrm{ on } \Gamma \coloneqq \partial \Omega. \hspace*{10mm}
\end{align}
\end{subequations}
For technical reasons, we assume that the diffusion matrix $A \in [L^\infty(\Omega)]^{2 \times 2}$ is $\TT_0$-piecewise constant and $A|_T \in \R^{2 \times 2}$ is symmetric and positive definite. Moreover, we assume that the restrictions $\f|_T, \g|_T$ are smooth for all $T \in \TT_0$.

Convergence and rate-optimality of GOAFEM has been addressed in~\cite{ms2008,bet2011,hpz2015,hp2016,pointabem,goafem}. The key idea of the argument is to let $u^\ast \in H^1_0(\Omega)$ be the unique solution to the \emph{dual problem}
\begin{subequations}\label{eq:intro:dual}
\begin{align}
 \hspace*{20mm} -\div(A\nabla u^\ast) &= g + \div \g &&\hspace*{-25mm} \textrm{ in } \Omega, \hspace*{25mm} \\
		u^\ast &= 0  &&\hspace*{-25mm} \textrm{ on } \Gamma. \hspace*{10mm}
\end{align}
\end{subequations}
Throughout, quantities associated with the dual problem are indexed by an asterisk. We note that the (symmetric) primal problem~\eqref{eq:intro:primal} and the dual problem~\eqref{eq:intro:dual} coincide up to the right-hand side.
Let $\enorm{v} := \norm{A^{1/2} \nabla v}{L^2(\Omega)}$ be the problem-induced energy norm.
For FEM approximations $u \approx u_\ell \in \SS^p_0(\TT_\ell)$ and $u^\ast \approx u_\ell^\ast \in \SS^p_0(\TT_\ell)$, standard duality arguments (together with the Galerkin orthogonality) lead to
\begin{align}\label{eq:intro:goafem}
\begin{split}
 |G(u) - G(u_\ell)| 
 &\le \enorm{u - u_\ell} \, \enorm{u^\ast - u^\ast_\ell}
 \\&
 \le \big[ \enorm{u - u_\ell} + \osc_\ell(\TT_\ell) \big] \, \big[ \enorm{u^\ast - u^\ast_\ell} + \osc_\ell^\ast(\TT_\ell) \big];
\end{split}
\end{align}
see, e.g.,~\cite{ms2008,goafem}.
Therefore, GOAFEM aims to control and steer the product of the total errors
\begin{align}\label{eq:intro:goafem_error}
 {\rm error}(\TT_\coarse)\,{\rm error}^\ast(\TT_\coarse) := \big[ \enorm{u - u_\coarse} + \osc_\coarse(\TT_\coarse) \big] \, \big[ \enorm{u^\ast - u^\ast_\coarse} + \osc_\coarse^\ast(\TT_\coarse) \big].
\end{align}
 While~\cite{bet2011,hpz2015,hp2016} focus on linear convergence of GOAFEM, the works~\cite{ms2008,pointabem,goafem} also prove rate optimality. All works employ variants of the D\"orfler marking criterion~\cite{doerfler1996}: The seminal work~\cite{ms2008} employs (quasi-) minimal D\"orfler marking separately for the primal and the dual problem, and then uses the smaller set for {\sl MARK}. Instead,~\cite{bet2011} proposes a (quasi-) minimal D\"orfler marking for some combined estimator. Both strategies guarantee rate optimality for the product of the total errors
\begin{align}\label{eq:intro:rate:goafem}
 \norm{u}{\mathbb{A}_s} + \norm{u^\ast}{\mathbb{A}_t} < \infty
 \,\, \Longrightarrow \,\,
 \exists \, C > 0 \, 
 \forall \, \ell \in \N_0 : \,\, 
 {\rm error}(\TT_\ell) \, {\rm error}^\ast(\TT_\ell) \le C \, (\#\TT_\ell)^{-(s+t)},
\end{align}%
where the possible algebraic rate $s + t$ now depends on the approximability properties 
of the primal and dual problem; see~\cite{ms2008,pointabem,goafem}.
While such a strategy thus leads to optimal rates for the error product, one has to admit that the latter may dramatically overestimate the actual goal error.

\subsection{Instance-optimal GOAFEM}\label{section:intro:goafem:instance}
The new GOAFEM algorithm can briefly be outlined as follows: {\sl SOLVE} computes the FEM solution $u_\ell \in \SS^p_0(\TT_\ell)$ to the primal problem~\eqref{eq:intro:primal} and $u_\ell^\ast \in \SS^p_0(\TT_\ell)$ to the dual problem~\eqref{eq:intro:dual}. {\sl ESTIMATE} computes the corresponding residual error estimators $\eta_\ell$ and $\eta_\ell^\ast$. {\sl MARK} employs the modified maximum strategy from~\cite{dks16} to obtain two sets of marked edges, namely $\overline\MM_\ell$ with respect to $\eta_\ell$ and $\overline\MM_\ell^\ast$ with respect to $\eta_\ell^\ast$. With $n := \min\{\#\overline\MM_\ell, \#\overline\MM_\ell^\ast\}$, we then define $\MM_\ell := \underline\MM_\ell \cup \underline\MM_\ell^\ast$, where $\underline\MM_\ell \subseteq \overline\MM_\ell$ and $\underline\MM_\ell^\ast \subseteq \overline\MM_\ell^\ast$ are arbitrary up to $\#\underline\MM_\ell = n = \#\underline\MM_\ell^\ast$. Finally, {\sl REFINE} employs 2D newest vertex bisection (NVB) to generate the coarsest mesh $\TT_{\ell + 1}$, where all edges in $\MM_\ell$ have been bisected once. 

The main result of the present work states that the proposed GOAFEM is instance optimal~\eqref{eq:intro:instance} with respect to the total-error product, i.e.,
\begin{align}
\label{eq:intro:instance:goafem}
 &\exists \, C > 1 \, 
 \forall \, \ell \in \N_0 \, 
 \forall \, \TT_\coarse, \TT_{\coarse^\ast} \in {\rm refine}(\TT_0) : \quad
 \\
\nonumber
 \mathclap{\hspace{-13pt}\big[ \, C} &\, \max\{ \#\TT_\coarse, \#\TT_{\coarse^\ast} \} \le \#\TT_\ell \,\, \Longrightarrow \,\,
 {\rm error}(\TT_\ell) \, {\rm error}^\ast(\TT_\ell)
 \le C \, {\rm error}(\TT_\coarse) \, {\rm error}^\ast(\TT_{\coarse^\ast}) \, \big].\hspace*{-1mm}
\end{align}%
Again, we note that this implies, in particular, rate optimality~\eqref{eq:intro:rate}.
On a technical side, we note that the seminal work~\cite{dks16} is restricted to the lowest-order FEM discretization $p = 1$, while the present analysis also allows higher (but fixed) polynomial degrees $p \ge 1$. In this sense, the present work provides also the technical tools to generalize the instance-optimal AFEM of~\cite{dks16} from $p = 1$ to general but fixed $p \ge 1$.

\subsection{Outline}
The remainder of this paper is organized as follows: In Section~\ref{sec:result}, we give a precise formulation of the modules {\sl SOLVE}, {\sl ESTIMATE}, {\sl MARK}, and {\sl REFINE} of the adaptive loop~\eqref{eq:intro:semr}. In particular, we state the modified maximum criterion (Algorithm~\ref{alg:marking}) from~\cite{dks16} as well as our extension to GOAFEM (Algorithm~\ref{alg:goafemmarking}). Then, we thoroughly formulate our GOAFEM algorithm (Algorithm~\ref{alg:abstractgoafem}) and state our main result that the proposed GOAFEM is instance optimal (Theorem~\ref{theorem:goafem}). 
Section~\ref{sec:auxiliary} collects the technical results to generalize the seminal work~\cite{dks16} from lowest-order FEM $p = 1$ to arbitrary polynomial degree $p \ge 1$ (Theorem~\ref{theorem:instanceoptimality}) by reviewing the proof of~\cite{dks16} in an abstract framework.
Thereafter, Section~\ref{sec:proofgoafem} gives the proof of Theorem~\ref{theorem:goafem}. 
Some numerical experiments in Section~\ref{sec:numerics} conclude this work and empirically compare the instance-optimal GOAFEM algorithm from the present work with the rate-optimal GOAFEM strategies from~\cite{ms2008,bet2011,goafem}.

\subsection{General notation}
In all results, the involved constants (as well as their dependencies) are stated explicitly.
In proofs, however, we write $\lesssim$ to abbreviate $\le$ up to a multiplicative constant which is clear from the context.
Moreover, we write $\simeq$ if both estimates, $\lesssim$ and $\gtrsim$, hold.


\section{Main result}
\label{sec:result}

\noindent
Before stating our main result, we discuss the particular modules of the adaptive loop~\eqref{eq:intro:semr} and fix the necessary notation.

\subsection{\textsl{REFINE}}
A mesh $\TT_\coarse$ is a conforming triangulation of $\Omega$ into non-degenerate compact triangles $T \in \TT_\coarse$. The edges of $\TT_\coarse$ are denoted by $\EE_\coarse$. The set of interior edges of $\TT_\coarse$ is denoted by $\EE_\coarse^{\Omega}$, i.e., for each $E \in \EE_\coarse^{\Omega}$, there exist unique $T, T' \in \TT_\coarse$ such that $E = T \cap T'$. The set of vertices of $\TT_\coarse$ is denoted by $\VV_\coarse$. We define the patches
\begin{align}
 \TT_\coarse(\omega) : = \set{T \in \TT_\coarse}{T \cap \omega \neq \emptyset}
 \quad \text{for all } \omega \subset \overline\Omega.
\end{align}
For vertices $z \in \VV_\coarse$, we abbreviate $\TT_\coarse(z) := \TT_\coarse(\{z\}) = \set{T \in \TT_\coarse}{z \in T}$. For neighbors $T, T'\in \TT_\coarse$, we also consider the reduced edge patch
\begin{align}
 \TT_\coarse^{\rm red}(E) := \set{T \in \TT_\coarse}{E \subset \partial T} = \{T, T'\}
 \quad \text{for } E = T \cap T' \in \EE_\coarse.
\end{align}
Similarly, we define the area associated to a set of triangles $\UU_\coarse \subseteq \TT_\coarse$ by
\begin{align}
 \Omega(\UU_\coarse) := \bigcup_{T \in \UU_\coarse} T \subseteq \overline\Omega
 \quad \text{for all } \UU_\coarse \subseteq \TT_\coarse.
\end{align}

For mesh-refinement, we employ an edge-based variant of newest vertex bisection (NVB) \cite{nvb}. We suppose that the initial mesh $\TT_0$ is admissible in the sense of~\cite{bdd04,nvb}: For all neighbors $T, T' \in \TT_0$, the joint edge $E = T \cap T' \in \EE_0$ is the reference edge of $T$ if and only if it is also the reference edge of $T'$. While this assumption is unnecessary for the NVB algorithm~\cite{kpp2013}, it provides additional structure which is crucial in the instance-optimality analysis of~\cite{dks16}.

For a mesh $\TT_\coarse$ and a set $\MM_\coarse \subseteq \EE_\coarse$ of marked edges, let $\TT_\fine := {\rm refine}(\TT_\coarse, \MM_\coarse)$ be the coarsest NVB refinement of $\TT_\coarse$ such that all edges $E \in \MM_\coarse$ have been bisected. Moreover, we write $\TT_\fine \in {\rm refine}(\TT_\coarse)$, if $\TT_\fine$ can be obtained by finitely many steps of NVB refinement. Then, $\T := {\rm refine}(\TT_0)$ is the set of all possible NVB refinements of $\TT_0$.
We note that NVB leads to uniformly shape-regular meshes in the sense of
\begin{equation}\label{eq:shapereg}
	\Cshapereg := \sup_{\TT_\coarse \in \T}
	\max_{T \in \TT_\coarse}
	\frac{\mathrm{diam}(T)^2}{|T|}
	< \infty,
\end{equation}
where $|T|$ is the area of a triangle $T$.

\subsection{\textsl{SOLVE}}
As usual, the primal problem~\eqref{eq:intro:primal} is understood in weak form. The Lax--Milgram lemma guarantees existence and uniqueness of $u \in H^1_0(\Omega)$ such that
\begin{equation}\label{eq:weakform}
	a(u,v) := \int_{\Omega} A \nabla u \cdot \nabla v \d{x}
	=
	\int_{\Omega} f v - \f \cdot \nabla v \d{x} =: F(v)
	\quad \text{for all }
	v \in H^1_0(\Omega).
\end{equation}
We define the energy norm $\enorm[]{v} \coloneqq \norm{A^{1/2} \nabla v}{L^2(\Omega)} = a(v,v)^{1/2}$ and note that $\enorm{\cdot} \simeq \seminorm{\cdot}{H^1(\Omega)} \coloneqq \norm[]{\nabla (\cdot)}{L^2(\Omega)} \simeq \norm{\cdot}{H^1(\Omega)}$ on $H^1_0(\Omega)$.
Let $\TT_\coarse \in \T$ and $p \ge 1$.  For the discretization of~\eqref{eq:weakform}, define the space of $\TT_\coarse$-piecewise polynomials
\begin{align}
 \PP^p(\TT_\coarse) := \set{v \in L^2(\Omega)}{\forall T \in \TT_\coarse \quad v|_T \text{ is a polynomial of degree } \le p}
\end{align}
as well as the conforming FEM spaces
\begin{align}
 \SS^p(\TT_\coarse) \coloneqq \PP^p(\TT_\coarse) \cap C(\Omega) = \PP^p(\TT_\coarse) \cap H^1(\Omega)
 \quad \text{and} \quad
 \SS^p_0(\TT_\coarse) \coloneqq \mathcal{S}^p(\TT_\coarse) \cap H^1_0(\Omega).
\end{align}
Again, the Lax--Milgram lemma proves the existence and uniqueness of $u_\coarse \in \SS^p_0(\TT_\coarse)$ such that
\begin{equation}\label{eq:weakfemform}
	a(u_\coarse,v_\coarse) = F(v_\coarse)
	\quad \text{for all }
	v_\coarse \in \SS^p_0(\TT_\coarse).
\end{equation}

\subsection{\textsl{ESTIMATE}}
For {\sl a~posteriori} error estimation, we employ an edge-based residual error estimator.
Let $|E|$ be the length of an edge $E$.
For the primal problem~\eqref{eq:intro:primal} with $F = f + \div\f \in H^{-1}(\Omega)$, we define
\begin{align}
\label{eq:def-eta}
 \begin{split}
 \eta_\coarse(E)^2 
 &\coloneqq |E| \, \norm{\jump{(A \nabla u_\coarse + \f)\cdot\normal}}{L^2(E)}^2
 \\&
 \qquad + \sum_{T \in \TT_\coarse^{\rm red}(E)} |T| \, \norm{f + \div (A \nabla u_\coarse + \f)}{L^2(T)}^2
 \quad \text{for all } E \in \EE_\coarse,
 \end{split}
\end{align}
where $\normal$ is a normal vector on $E$ and $\jump{\cdot}$ denotes the jump across $E$.
With this notation, we define 
\begin{equation}
\label{eq:def-eta-subset}
 \eta_\coarse(\UU_\coarse) := \bigg( \sum_{E \in \UU_\coarse} \eta_\coarse(E)^2 \bigg)^{1/2}.
\end{equation}
With the $L^2$-orthogonal projections $\Pi_E : L^2(E) \to \PP^{p-1}(E)$ and $\Pi_T : L^2(T) \to \PP^{p-2}(T)$, where $\PP^{-1}(T) := \{0\}$, the data resolution terms read
\begin{equation}
\label{eq:def-oscillations}
	 \data_\coarse(T)^2
	 \coloneqq |T| \, \norm{(1-\Pi_T)(f + \div \f)}{L^2(T)}^2
	 +\!\! \sum_{\substack{E \in \EE_\coarse\\E \subset \partial T}} |T|^{1/2} \, \norm{(1-\Pi_E)\jump{\f \cdot \normal}}{L^2(E)}^2.
\end{equation}
Note that for $p=1$ the volume term of $\data_\coarse(T)$ simply reads $|T| \, \norm{f + \div \f}{L^2(T)}^2$.
Finally, with the $L^2$-orthogonal projection $\Pi_T^+ : L^2(T) \to \PP^{p-1}(T)$, the so-called data oscillations read
\begin{equation}\label{eq:def-classical-oscillations}
	\osc_\coarse(T)^2
	\coloneqq |T| \, \norm{(1-\Pi_T^+)(f + \div \f)}{L^2(T)}^2
	+\!\! \sum_{\substack{E \in \EE_\coarse\\E \subset \partial T}} |T|^{1/2} \, \norm{(1-\Pi_E)\jump{\f \cdot \normal}}{L^2(E)}^2.
\end{equation}
We note that
\begin{align}\label{eq:osc-eta}
 \osc_\coarse(T) \leq
 \data_\coarse(T) \lesssim \eta_\coarse(E)
 \qquad \text{for all } T \in \TT_\coarse, \, E \in \EE_\coarse
 \text{ with } E \subseteq \partial T,
\end{align}
where the hidden constant depends only on $\Cshapereg$ from~\eqref{eq:shapereg}.
For a subset $\UU_\coarse \subseteq \TT_\coarse$, we define $\data_\coarse(\UU_\coarse)$ and $\osc_\coarse(\UU_\coarse)$ analogously to~\eqref{eq:def-eta-subset}.
We note that
\begin{equation}
\label{eq:reliability}
 \Crel^{-1} \, \enorm{\! u - u_\coarse \!} 
 \le \eta_\coarse(\EE_\coarse)
 \le \Ceff \, \big[ \enorm{\! u - u_\coarse \!} + \osc_\coarse(\TT_\coarse) \big]
 \le \Ceff \, \big[ \enorm{\! u - u_\coarse \!} + \data_\coarse(\TT_\coarse) \big], 
\end{equation}%
where the reliability constant $\Crel > 0$ depends only on $\Cshapereg$ from~\eqref{eq:shapereg}, while the efficiency constant $\Ceff > 0$ depends additionally on $p$.
In general, the data resolution term $\data_\coarse$ cannot be expected to be of higher order than $\textrm{error}(\TT_\coarse)$.
However, from~\eqref{eq:osc-eta}--\eqref{eq:reliability} one infers that there holds
\begin{equation}
\label{eq:data-osc-equivalence}
	\textrm{error}(\TT_\coarse) + \osc_\coarse(\TT_\coarse)
	\simeq
	\textrm{error}(\TT_\coarse) + \data_\coarse(\TT_\coarse).
\end{equation}

\subsection{\textsl{MARK}}
\label{subsec:marking}
Let $\TT_\coarse \in \T$. We define the tail of an edge $E \in \EE_\coarse$ by
\begin{align}
 \tail{E}{\coarse} \coloneqq \EE_\coarse \setminus \EE_{\coarse,E},
 \quad \text{where} \quad
 \TT_{\coarse,E} := {\rm refine}(\TT_\coarse,\{E\}),
\end{align}
i.e., the tail consists of all edges, which have to be refined to ensure conformity of the triangulation if $E$ is bisected. Moreover, we define
\begin{align}
 \tail{\UU_\coarse}{\coarse}
 := \bigcup_{E \in \UU_\coarse} \tail{E}{\coarse}
 \quad \text{for all } \UU_\coarse \subseteq \EE_\coarse.
\end{align}
With these definitions, we recall the following modified maximum criterion from~\cite[Algorithm~5.1 Step~(3)]{dks16}, which leads to an instance-optimal AFEM.

\begin{algorithm}[Modified maximum criterion]\label{alg:marking}\\
	\textbf{Input:} Edges $\EE_\coarse$ with indicators $\mu_\coarse := \big(\mu_\coarse(E)\big)_{E \in \EE_\coarse}$, marking parameter $0 < \vartheta \leq 1$.\\
	\textbf{Output:} Set $\MM_\coarse := {\rm markAFEM}(\EE_\coarse,\mu_\coarse,\vartheta) \subseteq \EE_\coarse$ of marked edges.
	\begin{algorithmic}[1]
		\STATE $\MM_\coarse \coloneqq \emptyset$ and $\UU \coloneqq \EE_\coarse$
		\STATE $M \coloneqq \max \set{\mu_\coarse(\tail{E}{\coarse})}{E \in \EE_\coarse}$
		\WHILE{$\UU \neq \emptyset$}
		\STATE pick $E \in \UU$ and update $\UU \coloneqq \UU \setminus \tail{E}{\coarse}$
		\STATE compute $m \coloneqq \eta_\coarse(\tail{E}{\coarse} \setminus \tail{\MM_\coarse}{\coarse})$
		\IF{$m \geq \vartheta M$}
		\STATE $\MM_\coarse \coloneqq \MM_\coarse \cup \{E\}$
		\ENDIF
		\ENDWHILE
	\end{algorithmic}
\end{algorithm}

\def\Z{\mathbb{Z}}
We refer to~\cite[Algorithm~7.2]{dks16} for a recursive implementation of Algorithm~\ref{alg:marking}, which has linear costs.

\subsection{Instance-optimal AFEM}
\label{subsec:instanceoptimality}
The work~\cite{dks16} analyzes the following instance of the adaptive loop~\eqref{eq:intro:semr}, which turns out to be instance-optimal; see Theorem~\ref{theorem:instanceoptimality}.

\begin{algorithm}[Instance-optimal AFEM]\label{alg:abstractafem}\\
\textbf{Input:} Initial mesh $\TT_0$, polynomial degree $p \in \N$, marking parameter $0 < \vartheta \leq 1$.\\
\textbf{Output:} Meshes $\TT_\ell$, discrete solutions $u_\ell$, and estimators $\eta_\ell(\EE_\ell)$ for all $\ell \in \N_0$.
\begin{algorithmic}[1]
\FOR{{\bfseries all} $\ell = 0,1,2, \dots$}
\STATE compute FEM solution $u_\ell \in \SS^p_0(\TT_\ell)$
\STATE compute indicators $\eta_\ell = \big( \eta_\ell(E) \big)_{E \in \EE_\ell}$
\STATE generate $\MM_\ell := {\rm markAFEM}(\EE_\ell, \eta_\ell, \vartheta)$ by Algorithm~\ref{alg:marking}
\STATE employ NVB to generate $\TT_{\ell+1} = {\rm refine}(\TT_\ell,\MM_\ell)$
\ENDFOR
\end{algorithmic}
\end{algorithm}

For $p = 1$, the following theorem is the main result of~\cite{dks16}. Our analysis below implies that the result remains true for arbitrary polynomial degrees $p \ge 1$. 

\begin{theorem}\label{theorem:instanceoptimality}
Let the initial mesh $\TT_0$ be admissible in the sense of~\cite{bdd04}. Let $p \in \N$ and $0 < \vartheta \leq 1$. 
Then, the AFEM Algorithm~\ref{alg:abstractafem} for the primal problem~\eqref{eq:intro:primal} is instance optimal with respect to the total error, i.e.,
\begin{align}
\label{eq:pfemopt}
	&\exists \, C > 1 \, 
	\forall \, \ell \in \N_0 \, 
	\forall \, \TT_\coarse \in {\rm refine}(\TT_0) : \\
	\nonumber
	\mathclap{\hspace{-28pt}\Big(  C \, \#}&(\TT_\coarse \backslash \TT_0) \le \#(\TT_\ell \backslash \TT_0) \,\,\Longrightarrow \,\,
	\enorm{u - u_\ell}^2 + \osc_\ell(\TT_\ell)^2
	\le C \big[ \enorm{u - u_\coarse}^2 + \osc_\coarse(\TT_\coarse)^2\big] \Big).
\end{align}%
The constant $C$ depends only on $\vartheta$, $p$, $\Cshapereg$, and the data $A$, $f$, $\f$.
\end{theorem}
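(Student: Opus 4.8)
The plan is to revisit the instance-optimality proof of~\cite{dks16} in an abstract form and to isolate precisely those of its hypotheses that depend on the polynomial degree~$p$. These fall into three groups: (i) combinatorial properties of newest vertex bisection on admissible meshes --- the tail structure of $\tail{\cdot}{\coarse}$, the mesh-closure estimate $\#(\TT_\ell \setminus \TT_0) \lesssim \sum_{j < \ell} \#\MM_j$, and the near-minimality of the set of edges produced by Algorithm~\ref{alg:marking} --- which are purely geometric and $p$-independent, hence can be taken verbatim from~\cite{dks16}; (ii) the ``axioms of adaptivity'' for the pair $\big(\eta_\coarse, \enorm{u - u_\coarse}\big)$, namely stability of $\eta$ on unrefined edges, estimator reduction on refined edges, reliability~\eqref{eq:reliability}, a discrete reliability $\enorm{u_\fine - u_\coarse} \lesssim \eta_\coarse(\mathcal{R}_{\coarse \to \fine})$ for $\TT_\fine \in {\rm refine}(\TT_\coarse)$ with $\mathcal{R}_{\coarse \to \fine}$ a bounded layer around the refined edges, and efficiency with oscillation; (iii) the dominance relation~\eqref{eq:osc-eta} between oscillation and estimator together with the equivalence~\eqref{eq:data-osc-equivalence}, which render the total error quasi-monotone along the adaptive sequence. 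Granting (i)--(iii), the argument of~\cite{dks16} carries over without change, so the work is to establish (ii) and (iii) for arbitrary $p \ge 1$.

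For (ii) and (iii) the structural observation is that, since $A$ is $\TT_0$-piecewise constant, $A\nabla u_\coarse + \f$ is elementwise smooth and $\div(A\nabla u_\coarse)|_T$ is a polynomial of degree $\le p - 1$; hence the ``resolvable'' part of the element residual $f + \div(A\nabla u_\coarse + \f)$ is exactly the piece removed by $\Pi_T^+$ in~\eqref{eq:def-classical-oscillations}, and the resolvable part of the jump $\jump{\f \cdot \normal}$ is the piece removed by $\Pi_E$. This is what makes~\eqref{eq:reliability} hold with $\Ceff$ depending only on $p$: the upper bound is the standard residual-estimator efficiency estimate obtained from bubble-function techniques on the reference simplex involving polynomials of degree depending on $p$, which for fixed $p$ yields a finite constant. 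Stability and estimator reduction follow as for $p = 1$ by triangle inequalities for the jump and the two volume terms in~\eqref{eq:def-eta}, the halving of $|T|$ under one bisection, and --- on an unrefined element $T$, where $(u_\fine - u_\coarse)|_T \in \PP^p(T)$ --- an inverse estimate of the form $\mathrm{diam}(T)\,\norm{\div(A\nabla(u_\fine - u_\coarse))}{L^2(T)} \lesssim \norm{\nabla(u_\fine - u_\coarse)}{L^2(T)}$ with a $p$-dependent constant. The discrete reliability is the usual argument via a quasi-interpolation operator onto $\SS^p_0(\TT_\fine)$ reproducing $\SS^p_0(\TT_\coarse)$, locally $H^1$-stable with local first-order approximation --- again with $p$-dependent constants, but nothing deeper than the lowest-order case.

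With (i)--(iii) available, I would then run the main argument of~\cite{dks16}: plain convergence and quasi-monotonicity of ${\rm error}(\TT_\ell)^2 \simeq \eta_\ell(\EE_\ell)^2 + \osc_\ell(\TT_\ell)^2$ along the sequence; an overlay comparison between $\TT_\ell$ and an arbitrary competitor $\TT_\coarse \in {\rm refine}(\TT_0)$; and the counting step, in which the cardinality hypothesis $C \, \#(\TT_\coarse \setminus \TT_0) \le \#(\TT_\ell \setminus \TT_0)$ together with the mesh-closure estimate forces the adaptive algorithm to have performed so many near-minimal marking steps that $\eta_\ell(\EE_\ell) \lesssim \eta_\coarse(\EE_\coarse)$ must already hold --- otherwise the modified maximum criterion of Algorithm~\ref{alg:marking} would have singled out strictly fewer edges than $\TT_\coarse$ requires. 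Converting back through reliability and efficiency~\eqref{eq:reliability} and using~\eqref{eq:osc-eta}--\eqref{eq:data-osc-equivalence} for the competitor then yields $\enorm{u - u_\ell}^2 + \osc_\ell(\TT_\ell)^2 \le C \, \big[\enorm{u - u_\coarse}^2 + \osc_\coarse(\TT_\coarse)^2\big]$. The dependence of $C$ on $p$ enters only through $\Ceff$ and the constants in stability, reduction, and discrete reliability, while the dependence on the data $A$, $f$, $\f$ enters through this final comparison chain.

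The main obstacle is group (ii), and within it the efficiency bound with the oscillation as defined in~\eqref{eq:def-classical-oscillations}: one must check that choosing $\Pi_T^+$ onto $\PP^{p-1}(T)$ --- one degree higher than the data-resolution projection $\Pi_T$ onto $\PP^{p-2}(T)$ in~\eqref{eq:def-oscillations} --- is exactly what is needed for the data-independent part of the element residual to be annihilated, so that the local efficiency constant is free of $A$, $f$, $\f$; getting these polynomial degrees to match is the crux, and it rests on a $p$-dependent bubble-function computation on the reference element. A secondary subtlety concerns the volume contributions to $\eta_\coarse(E)$ in~\eqref{eq:def-eta}: in the setting of~\cite{dks16} the element residual is essentially data (for $p = 1$ and $\TT_0$-piecewise constant $A$ one has $\div(A\nabla u_\coarse)|_T = 0$), whereas for $p \ge 2$ it genuinely involves the discrete solution through the degree-$(p-1)$ polynomial $\div(A\nabla u_\coarse)|_T$; one has to re-examine that the stability and reduction estimates and the marking analysis of~\cite{dks16} are insensitive to this, which they are, since the volume contributions scale and transform under one bisection exactly like the jump contribution.
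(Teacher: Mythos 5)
There is a genuine gap: you have listed, as the $p$-dependent analytical hypotheses of the argument in \cite{dks16}, the familiar ``axioms of adaptivity'' (stability of $\eta$ on unrefined edges, estimator reduction on refined edges, reliability, discrete reliability, efficiency with oscillation). That is the apparatus of the rate-optimality literature, not of \cite{dks16}. The argument of \cite{dks16} --- and this paper's reconstruction of it via the properties \ref{enum:A:mark}--\ref{enum:A:equiv} --- is organized around the shifted Dirichlet energy $\energy(\TT_\coarse) = \tfrac12\enorm{u-u_\coarse}^2 + \data_\coarse(\TT_\coarse)^2$, whose exact monotonicity under refinement (not mere quasi-monotonicity of a total error) is used, and above all around the diamond estimate \ref{enum:A:lde}: for any diamond $(\TT_\coarse,\TT_\fine;\TT_1,\dots,\TT_m)$ the energy drop $\energy(\TT_\coarse)-\energy(\TT_\fine)$ is two-sidedly equivalent to $\sum_{j}\bigl[\energy(\TT_j)-\energy(\TT_\fine)\bigr]$. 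This has no counterpart among the rate-optimality axioms, its proof for $p\ge 2$ hinges on Scott--Zhang projectors $\transfer{\fine}{j}$ that commute pairwise thanks to a careful choice of the dual simplices $\sigma_{\coarse,z}$, and it is the single most delicate $p$-dependent ingredient to extend from $p=1$ --- yet it is absent from your list. Neither estimator stability nor estimator reduction is invoked anywhere in \cite{dks16}, and the counting step there is carried out via ``populations'' over diamonds, not via the overlay/minimal-marking comparison you sketch, which is the shape of a D\"orfler-marking rate-optimality proof and does not yield \eqref{eq:pfemopt}.

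That said, parts of your proposal are on target and do correspond to pieces of what must actually be verified: your treatment of discrete reliability via a Scott--Zhang-type quasi-interpolation onto $\SS^p_0(\TT_\coarse)$ reproducing coarse functions, of discrete local efficiency via $p$-dependent bubble functions, and your observation that $\Pi_T^+$ onto $\PP^{p-1}(T)$ versus $\Pi_T$ onto $\PP^{p-2}(T)$ is precisely tuned so that the discrete part of the residual is annihilated are exactly what enters the local energy estimates \ref{enum:A:dle} and the equivalence \ref{enum:A:equiv}. What is missing is (a) replacing ``quasi-monotone total error'' by the genuinely monotone energy $\energy$ (verification of \ref{enum:A:mon} with $\data_\coarse$, not $\osc_\coarse$, in the definition), and (b) the diamond estimate \ref{enum:A:lde}, which rests on the Galerkin identity $\energy(\TT_\coarse)-\energy(\TT_\fine)=\tfrac12\enorm{u_\coarse-u_\fine}^2+\data_\coarse(\TT_\coarse)^2-\data_\fine(\TT_\fine)^2$, the decomposition \eqref{eq:transferareas} of $\transfer{\fine}{\circ}$, and the one-level reduction \eqref{eq:prop:lde} of the data-resolution terms. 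Without (b) the argument does not close.
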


\begin{remark}
We note that elementary calculation shows that, for all $\TT_\coarse \in \T \backslash \{\TT_0\}$,
\begin{align*}
 \#\TT_\coarse - \#\TT_0  
 \le \#(\TT_\coarse \backslash \TT_0) 
 \le \#\TT_\coarse
 \le (\#\TT_0) \, ( \#\TT_\coarse - \#\TT_0 + 1 )
 \le 2 \, (\#\TT_0) \, ( \#\TT_\coarse - \#\TT_0 );
\end{align*}
see, e.g.,~\cite[Lemma~22]{bhp2018}. Hence, $\#(\TT_\ell \backslash \TT_0)$ in~\eqref{eq:pfemopt} can, in fact, be replaced by $\#\TT_\ell$ (at the cost that the constant $C$ in~\eqref{eq:intro:instance} will additionally depend on $\#\TT_0$).
Therefore, the statement of Theorem~\ref{theorem:instanceoptimality} is equivalent to the introductory statement of instance optimality~\eqref{eq:intro:instance} in Section~\ref{section:intro:versus}.
\end{remark}

\subsection{Instance-optimal GOAFEM}

For GOAFEM, additionally the dual problem~\eqref{eq:intro:dual} is solved analogously to~\eqref{eq:weakform} and~\eqref{eq:weakfemform} with $F(\cdot)$ being replaced by $G(\cdot)$ from~\eqref{eq:intro:goal}.
The Lax--Milgram lemma guarantees existence and uniqueness of the dual solution $u^\ast \in H^1_0(\Omega)$ and its FEM approximation $u_\coarse^\ast \in \SS^p_0(\TT_\coarse)$.
For the dual problem~\eqref{eq:intro:dual} with $G = g + \div \g \in H^{-1}(\Omega)$, we define the \emph{dual error estimator}
\begin{align*}
\begin{split}
	\eta^\ast_\coarse(E)^2 
	&\coloneqq |E| \, \norm{\jump{(A \nabla u_\coarse^\ast + \g)\cdot\normal}}{L^2(E)}^2
	\\&
	\qquad + \sum_{T \in \TT_\coarse^{\rm red}(E)} |T| \, \norm{g + \div (A \nabla u_\coarse^\ast + \g)}{L^2(T)}^2
	\quad \text{for all } E \in \EE_\coarse,
\end{split}
\end{align*}
analogously to~\eqref{eq:def-eta} and $\eta_\coarse^\ast(\UU_\coarse)$ is understood as in~\eqref{eq:def-eta-subset}.
The data resolution terms $\data^\ast_\coarse(T)$ (and oscillations $\osc^\ast_\coarse(T)$) for the dual problem are defined analogously to~\eqref{eq:def-oscillations} (and~\eqref{eq:def-classical-oscillations}) with $g$ and $\g$ instead of $f$ and $\f$, respectively.
Again, we note that
\begin{align}
\label{eq:osc-eta-dual}
	\osc_\coarse^\ast(T) \leq
	\data_\coarse^\ast(T) \lesssim \eta_\coarse^\ast(E)
	\quad\text{for all } T \in \TT_\coarse, \, E \in \EE_\coarse
	\text{ with } E \subseteq \partial T,
\end{align}
where the hidden constant depends only on $\Cshapereg$ from~\eqref{eq:shapereg}.
For a subset $\UU_\coarse \subseteq \TT_\coarse$, we define $\data^\ast_\coarse(\UU_\coarse)$ and $\osc^\ast_\coarse(\UU_\coarse)$ analogously to~\eqref{eq:def-eta-subset}.
We note that
\begin{equation}
\begin{split}
\label{eq:reliability-dual}
	\Crel^{-1} \, \enorm{ u^\ast-  u^\ast_\coarse } 
	&\le \eta^\ast_\coarse(\EE_\coarse) \\
	&\le \Ceff \big[ \enorm{ u^\ast - u^\ast_\coarse } + \osc^\ast_\coarse(\TT_\coarse) \big]
	\le \Ceff \big[ \enorm{ u^\ast - u^\ast_\coarse } + \data^\ast_\coarse(\TT_\coarse) \big],
\end{split}
\end{equation}%
with the same constants $\Crel$ and $\Ceff$ as in~\eqref{eq:reliability}.

In case of GOAFEM, the estimate~\eqref{eq:intro:goafem} essentially reduces the problem to simultaneously solving two independent linear problems.
Therefore, using the AFEM Algorithm~\ref{alg:abstractafem} for the primal and dual problem separately and, in each step, taking the overlay of refinements is easily observed to be instance optimal when the refined sets have comparable cardinality.
This can be achieved by not refining some of the edges in the larger set of marked edges, since Proposition~\ref{prop:marking}~below guarantees that instance optimality does not depend on the number of marked edges per step.

The preceeding strategy is realized by our following marking strategy (Algorithm~\ref{alg:goafemmarking}).
For the formal statement, recall the Gauss brackets $\lfloor x \rfloor := \max \set{n \in \Z}{n \le x}$ for $x \in \R$.
We note that the following algorithm is slightly more general than the strategy outlined in Section~\ref{section:intro:goafem:instance} of the introduction (where $\Cmin = 1$).

\begin{algorithm}[Modified maximum criterion for GOAFEM]\label{alg:goafemmarking}\\
	\textbf{Input:} Edges $\EE_\coarse$, indicators $\eta_\coarse := \big(\eta_\coarse(E)\big)_{E\in \EE_\coarse}$ and $\eta^\ast_\coarse := \big(\eta^\ast_\coarse(E)\big)_{E\in \EE_\coarse}$, marking parameters $0 < \vartheta \leq 1$ and $\Cmin > 0$.\\
	\textbf{Output:} Set $\MM_\coarse := {\rm markGOAFEM}(\EE_\coarse,\eta_\coarse,\eta^\ast_\coarse,\vartheta,\Cmin) \subseteq \EE_\coarse$ of marked edges.
	\begin{algorithmic}[1]
		\STATE generate $\overline\MM_\coarse := {\rm markAFEM}(\EE_\coarse, \eta_\coarse, \vartheta)$ by Algorithm~\ref{alg:marking}
		\STATE generate $\overline\MM^\ast_\coarse := {\rm markAFEM}(\EE_\coarse, \eta^\ast_\coarse, \vartheta)$ by Algorithm~\ref{alg:marking}
		\STATE choose $\MM_{\rm min} \coloneqq \arg\min \big\{ \#\overline\MM_\coarse \,,\, \#\overline\MM^\ast_\coarse \}$ 
		and $\MM_{\rm max} := \big\{ \overline\MM_\coarse \,,\, \overline\MM^\ast_\coarse \big\} \backslash \big\{ \MM_{\rm min} \big\}$
		\STATE define $n \coloneqq \min \big\{
		\#\MM_{\rm max}, \max \{
		1, \lfloor
		\Cmin \#\MM_{\rm min}
		\rfloor \} \big\}$
		\STATE pick $\MM_{\rm max}' \subseteq \MM_{\rm max}$ with $\#\MM_{\rm max}' = n$
		\STATE choose $\MM_\coarse = \MM_{\rm min} \cup \MM_{\rm max}'$
	\end{algorithmic}
\end{algorithm}

As outlined in the introduction, the main idea behind GOAFEM is the duality-based estimate
\begin{equation*}
	\abs{G(u) - G(u_\ell)}~
	= 
	\abs{a(u - u_\ell,u^\ast)}~
	= 
	\abs{a(u - u_\ell,u^\ast - u^\ast_\ell)}
	\leq~
	\enorm{u - u_\ell} \enorm{u^\ast - u^\ast_\ell},
\end{equation*}
The formal statement of our GOAFEM algorithm reads as follows:

\begin{algorithm}[Instance-optimal GOAFEM]\label{alg:abstractgoafem}\\
\textbf{Input:} Initial mesh $\TT_0$, polynomial degree $p \in \N$, marking parameters $0 < \vartheta \leq 1$ and $\Cmin > 0$.\\
\textbf{Output:} Meshes $\TT_\ell$, discrete solutions $u_\ell, u_\ell^\ast$, estimators $\eta_\ell(\EE_\ell), \eta_\ell^\ast(\EE_\ell)$ and goal quantities $G(u_\ell)$ for all $\ell \in \N_0$.
\begin{algorithmic}[1]
\FOR{{\bfseries all} $\ell = 0,1,2, \dots$}
\STATE compute FEM solutions $u_\ell \in \SS^p_0(\TT_\ell)$ and $u_\ell^\ast \in \SS^p_0(\TT_\ell)$
\STATE compute indicators $\eta_\ell = \big( \eta_\ell(E) \big)_{E \in \EE_\ell}$ and $\eta_\ell^\ast = \big( \eta_\ell^\ast(E) \big)_{E \in \EE_\ell}$
\STATE generate $\MM_\ell := {\rm markGOAFEM}(\EE_\ell, \eta_\ell, \eta_\ell^\ast, \vartheta, \Cmin)$ by Algorithm~\ref{alg:goafemmarking}
\STATE employ NVB to generate $\TT_{\ell+1} = {\rm refine}(\TT_\ell,\MM_\ell)$
\ENDFOR
\end{algorithmic}
\end{algorithm}

The following theorem is the main result of this work. We stress that the theorem involves the adaptively generated mesh $\TT_\ell$ for the primal and the dual error and compares it with arbitrary meshes $\TT_\coarse$ and $\TT_{\coarse^\ast}$, where $\TT_\coarse$ is used for the primal error and $\TT_{\coarse^\ast}$ is used for the dual error.

\begin{theorem}\label{theorem:goafem}
Let the initial mesh $\TT_0$ be admissible in the sense of~\cite{bdd04}. Let $p \in \N$ and $0 < \vartheta \leq 1$ as well as $\Cmin > 0$.
Let $(\TT_\ell)_{\ell \in \N_0}$ be the sequence of meshes generated by Algorithm~\ref{alg:abstractgoafem}.
Then, the AFEM Algorithm~\ref{alg:abstractafem} is instance optimal with respect to the product of total errors, i.e., \quad $\exists \, C > 1 \, 
	\forall \, \ell \in \N_0 \, 
	\forall \, \TT_\coarse, \TT_{\coarse^\ast} \in {\rm refine}(\TT_0) :$
\begin{equation}
\label{eq:goafemio}
\begin{split}
	& \Big( C \, \max \{\#(\TT_\coarse \backslash \TT_0), \#(\TT_{\coarse^\ast} \backslash \TT_0)\} \le \#(\TT_\ell \backslash \TT_0) \\
	&\qquad \qquad \Longrightarrow \,
	\big[ \, \enorm{u - u_\ell}^2 + \osc_\ell(\TT_\ell)^2 \, \big]
	\big[ \, \enorm{u^\ast - u^\ast_\ell}^2 + \osc^\ast_\ell(\TT_\ell)^2 \, \big]\\
	& \qquad \quad \qquad \qquad \leq
	C \big[ \, \enorm{u - u_\coarse}^2 + \osc_\coarse(\TT_\ell)^2 \, \big]
	\big[ \, \enorm{u^\ast - u_{\coarse^\ast}^\ast}^2 + \osc^\ast_{\coarse^\ast}(\TT_\ell)^2 \, \big] \Big).
\end{split}
\end{equation}
The constant $C$ depends only on $\vartheta$, $p$, $\Cshapereg$ $\Cmin$, and the data $A$, $f$, $\f$, $g$, $\g$.
\end{theorem}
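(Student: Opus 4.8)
The plan is to reduce Theorem~\ref{theorem:goafem} to the single-problem instance optimality of Theorem~\ref{theorem:instanceoptimality} by exploiting that, up to the right-hand side, the primal and dual problems are \emph{the same} bilinear form on the same mesh hierarchy, and that the marking strategy in Algorithm~\ref{alg:goafemmarking} never discards more than a $\Cmin$-fraction of marked edges from either problem. First, I would observe that the meshes $\TT_\ell$ produced by Algorithm~\ref{alg:abstractgoafem} are \emph{simultaneously} valid runs of Algorithm~\ref{alg:abstractafem} for the primal problem and for the dual problem, in the following sense: in each step we mark a set $\MM_\ell \supseteq \MM_{\rm min}$, where $\MM_{\rm min}$ is the smaller of the two AFEM-marked sets $\overline\MM_\ell, \overline\MM_\ell^\ast$. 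The nontrivial point is that the instance-optimality proof of~\cite{dks16} (as reviewed abstractly in Section~\ref{sec:auxiliary}) is robust under \emph{enlarging} the marked set and even under \emph{shrinking} it down to a fixed positive fraction; this is exactly the content the paper flags as Proposition~\ref{prop:marking} (``instance optimality does not depend on the number of marked edges per step''). Granting that proposition, the mesh sequence $(\TT_\ell)$ is an instance-optimal AFEM sequence for the primal problem and, separately, an instance-optimal AFEM sequence for the dual problem, with constants depending additionally on $\Cmin$.

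The second step is to convert the two single-problem statements into the product statement. From Theorem~\ref{theorem:instanceoptimality} applied to the primal problem we get: whenever $C_1 \, \#(\TT_\coarse \backslash \TT_0) \le \#(\TT_\ell \backslash \TT_0)$, then $\enorm{u - u_\ell}^2 + \osc_\ell(\TT_\ell)^2 \le C_1 \big[\enorm{u - u_\coarse}^2 + \osc_\coarse(\TT_\coarse)^2\big]$, and the analogous inequality holds for the dual problem with $\TT_{\coarse^\ast}$ in place of $\TT_\coarse$. Under the hypothesis $C \max\{\#(\TT_\coarse\backslash\TT_0), \#(\TT_{\coarse^\ast}\backslash\TT_0)\} \le \#(\TT_\ell\backslash\TT_0)$ with $C$ large enough (say $C = C_1$), both hypotheses are met simultaneously, so I would simply multiply the two resulting inequalities. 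Using $\osc_\ell \le \data_\ell \lesssim \eta_\ell$ and the equivalence~\eqref{eq:data-osc-equivalence} to pass freely between the $\osc$ and $\data$ versions (and noting that the right-hand side of~\eqref{eq:goafemio} as written has a harmless typo, $\osc_\coarse(\TT_\ell)$ should read $\osc_\coarse(\TT_\coarse)$, etc.), the product bound
$\big[\enorm{u-u_\ell}^2+\osc_\ell(\TT_\ell)^2\big]\big[\enorm{u^\ast-u^\ast_\ell}^2+\osc^\ast_\ell(\TT_\ell)^2\big] \le C^2 \big[\enorm{u-u_\coarse}^2+\osc_\coarse(\TT_\coarse)^2\big]\big[\enorm{u^\ast-u^\ast_{\coarse^\ast}}^2+\osc^\ast_{\coarse^\ast}(\TT_{\coarse^\ast})^2\big]$
follows, which is~\eqref{eq:goafemio} after renaming the constant. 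The dependence of the final constant on $\vartheta,p,\Cmesh,\Cmin$ and the data is inherited from Theorem~\ref{theorem:instanceoptimality} and the $\Cmin$-dependence introduced by the fractional-marking robustness.

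The genuine obstacle is the robustness claim itself: proving that the modified maximum criterion of~\cite{dks16} remains instance optimal when, in each step, one refines a set $\MM_\ell$ with $\MM_{\rm min} \subseteq \MM_\ell$ and $\#\MM_\ell$ possibly much smaller than $\#(\overline\MM_\ell \cup \overline\MM_\ell^\ast)$. The original argument of~\cite{dks16} crucially uses that \emph{all} edges passing the maximum test get bisected, in order to control the ``tail'' bookkeeping and to bound the total number of refined edges against an optimal mesh. Here one has to check two things: (i) that marking a \emph{superset} of one problem's AFEM-marked set cannot hurt instance optimality for \emph{that} problem — morally clear, since more refinement only decreases the error, but one must verify the mesh-counting side, i.e., that the extra edges (coming from the other problem's estimator) are still charged correctly against an optimal mesh; and (ii) that marking only a positive fraction $n \simeq \Cmin \#\MM_{\rm min}$ of the larger set $\MM_{\rm max}$, rather than all of it, still yields a valid instance-optimal AFEM run for \emph{that} (larger-marked) problem — here the point is that Algorithm~\ref{alg:marking}'s output already has the property that \emph{any} positive fraction of it suffices, because the maximum criterion is scale-invariant and the error reduction per bisected ``significant'' edge is uniformly bounded below. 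I expect this is precisely where the abstract reformulation of Section~\ref{sec:auxiliary} (Theorem~\ref{theorem:instanceoptimality} stated for general $p$, plus Proposition~\ref{prop:marking}) does the heavy lifting, so in the write-up of Theorem~\ref{theorem:goafem} itself I would cite those two results and present only the short multiplicative combination sketched above, keeping the technical burden confined to the earlier section.
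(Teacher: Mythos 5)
Your proposal follows essentially the same route as the paper: verify the four abstract conditions \ref{enum:A:mark}--\ref{enum:A:dle} (plus \ref{enum:A:equiv}) for the GOAFEM-marked sequence with respect to both the primal and the dual estimator, invoke the abstract instance-optimality result of Proposition~\ref{theorem:abstractio} once per problem, and multiply the two resulting inequalities; you also correctly spot the $\osc_\coarse(\TT_\ell)$/$\osc_\coarse(\TT_\coarse)$ typo in the theorem statement. One small imprecision: you cannot apply Theorem~\ref{theorem:instanceoptimality} to the GOAFEM meshes literally (they are not produced by Algorithm~\ref{alg:abstractafem}), and Proposition~\ref{prop:marking} alone is not quite the robustness you need; the actual content the paper supplies is Lemma~\ref{lemma:goafemmarking}, which combines Proposition~\ref{prop:marking} with the cardinality bookkeeping $\#\MM_\coarse \simeq \#\MM_{\rm min} \simeq \#\MM_{\rm max}'$ (using the cap $\Cmin$) to show that the \emph{same} set $\MM_\coarse$ satisfies \ref{enum:A:mark} for both estimators with a $\Cmin$-dependent constant — after which Proposition~\ref{theorem:abstractio} applies for primal and dual separately, exactly as you then say.
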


\begin{remark}
	Note that the natural statement of instance-optimality for GOAFEM in the sense of \eqref{eq:intro:instance} and \eqref{eq:intro:goafem_error} would be: 
	$\exists \, C > 1 \, 
	\forall \, \ell \in \N_0 \, 
	\forall \, \TT_\coarse \in {\rm refine}(\TT_0)$ such that
	\begin{align*}
		&\Big( C \#(\TT_\coarse \backslash \TT_0) \le \#(\TT_\ell \backslash \TT_0) \\
		&\qquad \qquad \Longrightarrow \,
		\big[ \, \enorm{u - u_\ell}^2 + \osc_\ell(\TT_\ell)^2 \, \big]
		\big[ \, \enorm{u^\ast - u^\ast_\ell}^2 + \osc^\ast_\ell(\TT_\ell)^2 \, \big]\\
		& \qquad \qquad \qquad \qquad \quad \leq
		C \big[ \, \enorm{u - u_\coarse}^2 + \osc_\coarse(\TT_\ell)^2 \, \big]
		\big[ \, \enorm{u^\ast - u_{\coarse}^\ast}^2 + \osc^\ast_{\coarse}(\TT_\ell)^2 \, \big] \Big).
	\end{align*}
	Our Theorem~\ref{theorem:goafem}, however, is stronger.
	There, the mesh for the right-hand side can be chosen for both factors independently.
\end{remark}

\section{Auxiliary results}
\label{sec:auxiliary}

In this section, we present four properties~{\rm\ref{enum:A:mark}--\ref{enum:A:dle}} that are sufficient for instance optimality.
We further show, how they are proved for our model problem from Section~\ref{sec:result}. 
In particular, we generalize the analysis of~\cite{dks16} from lowest-order FEM $p = 1$ to arbitrary fixed order $p \ge 1$.

\subsection{Abstract result on instance optimality}
\label{subsec:abstractio}

This subsection aims to review the proof of \cite[Theorem~7.3]{dks16} in an abstract framework. 
For arbitrary $m \in \N$, the tuple $(\TT_\coarse, \TT_\fine ; \TT_1,\ldots, \TT_m) \in \T^{m+2}$ is a \emph{diamond}, if 
\begin{itemize}
	\item $\TT_j \in \T$ are meshes for all $j = 1, \dots, m$
	\item with finest common coarsening $\TT_\coarse \in \T$ and coarsest common refinement $\TT_\fine \in \T$
	\item such that the areas $\Omega(\TT_j \backslash \TT_\fine)$ are pairwise disjoint for all $j = 1, \dots, m$.
\end{itemize}
Note that $\TT_\coarse, \TT_\fine \in \T$ exist (and are unique), since newest vertex bisection is a a binary refinement rule, where the order of refinements does not matter. This allows to write
\begin{align*}
\TT_\coarse &= 
\bigcup_{j = 1}^m 
\set{T \in \TT_j}{\forall k \in \{1, \dots, m\} \, \forall T' \in \TT_k \quad \big( \, T \subseteq T' \, \Longrightarrow \, T = T' \, \big) },
\\
\TT_\fine &= 
\bigcup_{j = 1}^m 
\set{T \in \TT_j}{\forall k \in \{1, \dots, m\} \, \forall T' \in \TT_k \quad \big( \, T' \subseteq T \, \Longrightarrow \, T = T' \, \big) }.
\end{align*}
Diamonds are a means to couple the lattice structure of $\T$ with an abstract energy 
\begin{align}\label{eq:def:abstract:energy}
\energy : \T \to \R_{\ge0}.
\end{align}%
Only energies that are compatible with this structure are suitable to prove instance optimality. This is encoded in the following properties~\ref{enum:A:mark}--\ref{enum:A:equiv}, where $\Cmark$, $\Clde$, $\Cest$, $\Cerr > 0$ are generic constants, $\eta_\fine$ is a computable edge-based estimator, and {\tt mark} is an abstract marking strategy:

\begin{enumerate}[label=(A\arabic*)]
	\bf 
	\item \label{enum:A:mark}
	Marking criterion: 
	\rm
	For all meshes $\TT_\coarse \in \T$ with  edges $\EE_\coarse$, the marking strategy guarantees that the marked edges $\MM_\coarse := {\tt mark}(\EE_\coarse, (\eta_\coarse(E))_{E \in \EE_\coarse})$ satisfy that
	\begin{equation*}
	\MM_\coarse \neq \emptyset
	\quad \textrm{and} \quad
	\est_\coarse (\tail{\MM_\coarse}{\coarse})^2
	\geq
	\Cmark \, ( \setpower{\MM_\coarse} ) \,
	\max_{E \in \EE_\coarse} \est_\coarse(\tail{E}{\coarse})^2.
	\end{equation*}
	\bf	
	\item \label{enum:A:mon}
	Monotonicity of energy: 
	\rm
	For all $\TT_\coarse \in \T$ and all $\TT_\fine \in~\refine{\TT_\coarse}{}$, it holds that
	\begin{equation*}
	0 \le \energy(\TT_\fine) \leq \energy(\TT_\coarse).	
	\end{equation*}
	\bf
	\item \label{enum:A:lde}
	Diamond estimate: 
	\rm 
	For all diamonds $(\TT_\coarse, \TT_\fine; \TT_1,\ldots, \TT_m) \in \T^{m+2}$, it holds that
	\begin{equation*}
	\Clde^{-1} \, \big[ \, \energy(\TT_\coarse) - \energy(\TT_\fine) \, \big]
	\le \sum_{j=1}^{m} \big[ \, \energy(\TT_j) - \energy(\TT_\fine) \, \big]
	\le \Clde \, \big[ \, \energy(\TT_\coarse) - \energy(\TT_\fine) \, \big].
	\end{equation*}
	\bf	
	\item \label{enum:A:dle}
	Local energy estimates for the estimator:
	\rm
	For all $\TT_\coarse \! \in \! \T$ and $\TT_\fine \in\!~\!\refine{\TT_\coarse}{}$, it holds that
	\begin{equation*}
	\Cest^{-1} \big[ \, \est_\coarse(\EE_\coarse \setminus \EE_\fine)^2 \, \big]
	\le \energy(\TT_\coarse) - \energy(\TT_\fine)
	\le  \Cest \big[ \, \est_\coarse(\EE_\coarse \setminus \EE_\fine)^2 \, \big].
	\end{equation*}
	\bf
	\item \label{enum:A:equiv}
	Equivalence of energy and total error: 
	\rm
	For all $\TT_\coarse \in \T$, it holds that
	\begin{equation*}
	\Cerr^{-1} \, \energy(\TT_\coarse) 
	\le 
	\enorm{u - u_\coarse}^2 + \osc_\coarse(\TT_\coarse)^2
	\le \Cerr \, \energy(\TT_\coarse) 
	\end{equation*}
\end{enumerate}

As can be seen from the proof of \cite[Theorem~7.3]{dks16}, the conditions~\ref{enum:A:mark}--\ref{enum:A:dle} are sufficient for an AFEM to be instance optimal with respect to $\energy$. Moreover, condition~\ref{enum:A:equiv} allows to derive 
instance optimality with respect to the total error.
We formulate this as a proposition, but refer to~\cite{dks16} for the proof.

\begin{proposition}\label{theorem:abstractio}
	Consider an AFEM loop as given by \eqref{eq:intro:semr}, which satisfies the conditions {\rm\ref{enum:A:mark}--\ref{enum:A:dle}}. Then, the AFEM is instance optimal with respect to the energy, i.e.,
	\begin{align}\label{eq:dks16}
	\begin{split}
	\exists \, C \! > \! 1 \, 
	\forall \, \ell \in \N_0 \, 
	\forall \, \TT_\coarse \! \in \! {\rm refine}(\TT_0) : ~
	&\big[ C \, \#(\TT_\coarse \backslash \TT_0) \le \#(\TT_\ell \backslash \TT_0) \, \Longrightarrow \,
	\energy(\TT_\ell)
	\le \energy(\TT_\coarse) \big].
	\end{split}
	\end{align}
	If {\rm\ref{enum:A:equiv}} is satisfied in addition, then the AFEM is instance optimal in the sense of~\eqref{eq:intro:instance}.
	\qed
\end{proposition}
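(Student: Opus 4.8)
`\section*{Proof plan for Proposition~\ref{theorem:abstractio}}`

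\section*{Proof plan for Proposition~\ref{theorem:abstractio}}

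The plan is to reconstruct the argument of~\cite[Theorem~7.3]{dks16}, isolating where each of the properties~\ref{enum:A:mark}--\ref{enum:A:equiv} and the standard properties of newest vertex bisection on the admissible mesh $\TT_0$ enter. The second assertion follows from the first at once: assuming~\eqref{eq:dks16}, the chain
\begin{equation*}
	\enorm{u - u_\ell}^2 + \osc_\ell(\TT_\ell)^2
	\le \Cerr \, \energy(\TT_\ell)
	\le \Cerr \, \energy(\TT_\coarse)
	\le \Cerr^2 \, \big[ \enorm{u - u_\coarse}^2 + \osc_\coarse(\TT_\coarse)^2 \big]
\end{equation*}
(the outer inequalities by~\ref{enum:A:equiv}, the middle one by~\eqref{eq:dks16}) gives~\eqref{eq:intro:instance} with the element-count threshold of~\eqref{eq:dks16} and an extra factor $\Cerr^2$. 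Hence it suffices to prove~\eqref{eq:dks16}, and I focus on the energy from now on.

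First I would record two consequences of~\ref{enum:A:mark}--\ref{enum:A:dle}. \emph{(a) Efficiency of one adaptive step.} Since $\TT_{\ell+1} = {\rm refine}(\TT_\ell, \MM_\ell)$, every edge whose bisection is forced by some marked edge is removed, i.e., $\tail{\MM_\ell}{\ell} \subseteq \EE_\ell \setminus \EE_{\ell+1}$; hence the lower bound in~\ref{enum:A:dle} together with~\ref{enum:A:mark} yields
\begin{equation*}
	\energy(\TT_\ell) - \energy(\TT_{\ell+1}) \ge \Cest^{-1} \, \est_\ell\big(\tail{\MM_\ell}{\ell}\big)^2 \ge \Cest^{-1} \Cmark \, (\#\MM_\ell) \, M_\ell^2 , \qquad M_\ell^2 := \max_{E \in \EE_\ell} \est_\ell\big(\tail{E}{\ell}\big)^2 .
\end{equation*}
\emph{(b) The competitor cannot gain much locally.} Fix $\TT_\coarse \in {\rm refine}(\TT_0)$ and let $\widehat\TT_\ell \in \T$ be the coarsest common refinement of $\TT_\ell$ and $\TT_\coarse$. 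Every $E \in \EE_\ell \setminus \EE_{\widehat\TT_\ell}$ is bisected on passing to $\widehat\TT_\ell$, so $E \in \tail{E}{\ell}$ and $\est_\ell(E)^2 \le M_\ell^2$; with the upper bound in~\ref{enum:A:dle},
\begin{equation*}
	\energy(\TT_\ell) - \energy(\widehat\TT_\ell) \le \Cest \, \est_\ell\big(\EE_\ell \setminus \EE_{\widehat\TT_\ell}\big)^2 \le \Cest \, \#\big(\EE_\ell \setminus \EE_{\widehat\TT_\ell}\big) \, M_\ell^2 \lesssim \#(\TT_\coarse \setminus \TT_0) \, M_\ell^2 ,
\end{equation*}
the last step using that a refined triangle carries at most three edges, the NVB overlay estimate $\#\widehat\TT_\ell \le \#\TT_\ell + \#\TT_\coarse - \#\TT_0$ (see, e.g.,~\cite{stevenson2007,nvb}), and the elementary cardinality comparisons recorded after Theorem~\ref{theorem:instanceoptimality}. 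Since $\widehat\TT_\ell \in {\rm refine}(\TT_\coarse)$, monotonicity~\ref{enum:A:mon} yields $\energy(\widehat\TT_\ell) \le \energy(\TT_\coarse)$, so that $M_\ell^2 \gtrsim \#(\TT_\coarse\setminus\TT_0)^{-1} \big( \energy(\TT_\ell) - \energy(\TT_\coarse) \big)$ whenever $\energy(\TT_\ell) > \energy(\TT_\coarse)$.

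The remaining — and principal — step is the counting argument of~\cite{dks16}. Arguing for the contrapositive of~\eqref{eq:dks16}, fix $\ell$ and $\TT_\coarse$ with $\energy(\TT_\ell) > \energy(\TT_\coarse)$; one must bound $\#(\TT_\ell\setminus\TT_0)$ by $C\,\#(\TT_\coarse\setminus\TT_0)$. Because $\energy$ is non-increasing along the adaptive sequence (by~\ref{enum:A:mon}), one has $\energy(\TT_k) > \energy(\TT_\coarse)$ for all $k \le \ell$, so the lower bound (b) for $M_k^2$ is available at each step $k < \ell$; combining it with (a) gives $\energy(\TT_k) - \energy(\TT_{k+1}) \gtrsim \#(\TT_\coarse\setminus\TT_0)^{-1}\,\#\MM_k\,\big(\energy(\TT_k) - \energy(\TT_\coarse)\big)$, and summing over $k<\ell$ together with the NVB mesh-closure estimate $\#(\TT_\ell\setminus\TT_0) \lesssim \sum_{k<\ell}\#\MM_k$ (valid since $\TT_0$ is admissible; see~\cite{stevenson2007,nvb}) controls how many elements the algorithm may refine while the energy stays above $\energy(\TT_\coarse)$. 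The subtle point — and the main obstacle — is that this crude telescoping only yields geometric decay of $\energy(\TT_k) - \energy(\TT_\coarse)$ and does not by itself close the contrapositive with a genuine constant; following~\cite{dks16} one must replace it by a \emph{localized} comparison. Concretely, one decomposes $\Omega$ into the sub-regions in which $\TT_\ell$ and $\TT_\coarse$ refine $\TT_0$, sorts the edges of the common refinement $\widehat\TT_\ell$ by the size of the indicators $\est_k(\tail{\cdot}{k})$, and pairs the refinements performed by the adaptive algorithm against those defining $\TT_\coarse$ so that each unit of adaptive refinement is charged against a comparable amount of refinement in $\TT_\coarse$. The diamond estimate~\ref{enum:A:lde} is exactly what makes this charging consistent: it guarantees that the energy reduction caused by refining a collection of mutually disjoint edge-neighborhoods equals, up to the constant $\Clde$, the sum of the individual reductions, so that the per-step efficiency (a) can be matched against the competitor's local cost region by region. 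Carrying out this bookkeeping — the technical heart of~\cite{dks16}, where~\ref{enum:A:mark}--\ref{enum:A:dle} are all simultaneously used — yields~\eqref{eq:dks16}, and then~\ref{enum:A:equiv} upgrades it to~\eqref{eq:intro:instance} as shown above.
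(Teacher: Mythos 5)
The paper itself offers no proof of Proposition~\ref{theorem:abstractio}: the statement is closed with a \textit{qed} symbol and the reader is referred wholesale to~\cite[Theorem~7.3]{dks16} (see also~\cite{ks2016}), the only point made in Section~\ref{subsec:abstractio} being that that proof uses nothing beyond~\ref{enum:A:mark}--\ref{enum:A:dle} together with structural properties of 2D NVB. So there is no in-paper argument to compare yours against step by step; the relevant question is whether your reconstruction would stand on its own.

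It does not, and the gap sits exactly where you flag it. Your reduction of the second claim to~\eqref{eq:dks16} via~\ref{enum:A:equiv} is correct, and your preliminary estimates are genuine ingredients of the argument: the per-step decrease $\energy(\TT_k)-\energy(\TT_{k+1})\ge \Cest^{-1}\Cmark\,(\#\MM_k)\,M_k^2$ from~\ref{enum:A:mark} and the lower bound in~\ref{enum:A:dle}, and the competitor bound $\energy(\TT_\ell)-\energy(\TT_\coarse)\lesssim \#(\TT_\coarse\setminus\TT_0)\,M_\ell^2$ from the upper bound in~\ref{enum:A:dle}, the NVB overlay estimate, and~\ref{enum:A:mon}. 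You also correctly diagnose that telescoping these only yields geometric decay of $\energy(\TT_k)-\energy(\TT_\coarse)$, which can never force this quantity to become nonpositive and hence cannot deliver~\eqref{eq:dks16}. But what you put in its place --- ``decompose $\Omega$ into sub-regions, sort the edges of the overlay by indicator size, pair adaptive refinements against competitor refinements'' --- is a gesture rather than an argument: no charging map is defined, no inequality is derived from it, and it is not a faithful description of what~\cite{dks16} actually does. Their proof runs an induction over the adaptive history in which the lower diamond estimate~\ref{enum:A:lde} splits the energy excess over the overlay $\TT_k\oplus\TT_\coarse$ into contributions of disjoint refinement regions, the maximum property~\ref{enum:A:mark} then forces a fixed proportion of each $\MM_k$ to lie where $\TT_\coarse$ is strictly finer than $\TT_k$, and the NVB closure and population/counting machinery converts this into $\#(\TT_\ell\setminus\TT_0)\le C\,\#(\TT_\coarse\setminus\TT_0)$. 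A telltale symptom of the gap is that your concrete inequalities never invoke~\ref{enum:A:lde} at all, even though it is a hypothesis of the proposition and is indispensable; a proof in which the diamond estimate appears only in prose cannot be complete. As it stands, your proposal is an annotated pointer to~\cite{dks16} --- which, to be fair, is also all the paper provides.
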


\begin{remark}
	We note that the proof of Proposition~\ref{theorem:abstractio} (resp.~\cite[Theorem~7.3]{dks16}) is currently tailored to 2D newest vertex bisection, for which structural properties are exploited (so-called \emph{populations}). Besides this, the proof only relies on the given properties~{\rm\ref{enum:A:mark}--\ref{enum:A:dle}}, as was already observed in \cite{ks2016}.
\end{remark}

\subsection{Verification of~\ref{enum:A:mark}: Marking criterion}\label{subsec:A1}
In \cite[Proposition~5.1]{dks16}, it is shown that Algorithm~\ref{alg:marking} satisfies the marking criterion~\ref{enum:A:mark} with $\Cmark = \vartheta$.
We state the following proposition, which is a straightforward generalization of this result and actually follows from the same arguments.

\begin{proposition}\label{prop:marking}
	Let $\overline{\MM}_\coarse \subseteq \EE_\coarse$ be the set of edges marked by Algorithm~\ref{alg:marking} for $0 < \vartheta \leq 1$.
		Then, any subset $\MM_\coarse \subseteq \overline{\MM}_\coarse$  with $\MM_\coarse \neq \emptyset$ satisfies {\rm\ref{enum:A:mark}} with $\Cmark = \vartheta$.
		\hfill $\square$
\end{proposition}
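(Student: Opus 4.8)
The plan is to reduce the claim about a nonempty subset $\MM_\coarse \subseteq \overline\MM_\coarse$ to the already-known statement that the full set $\overline\MM_\coarse$ produced by Algorithm~\ref{alg:marking} satisfies \ref{enum:A:mark} with $\Cmark = \vartheta$ (as established in \cite[Proposition~5.1]{dks16}). The nonemptiness part of \ref{enum:A:mark} is immediate from the hypothesis $\MM_\coarse \neq \emptyset$, so the work is entirely in the lower bound
\begin{equation*}
 \est_\coarse(\tail{\MM_\coarse}{\coarse})^2 \ge \vartheta \, (\#\MM_\coarse) \, \max_{E \in \EE_\coarse} \est_\coarse(\tail{E}{\coarse})^2 .
\end{equation*}
First I would isolate the key structural fact underlying the original proof: in Algorithm~\ref{alg:marking}, an edge $E$ is only added to $\MM_\coarse$ when the quantity $m = \eta_\coarse(\tail{E}{\coarse} \setminus \tail{\MM_\coarse}{\coarse})$, computed against the \emph{current} (partial) marked set, satisfies $m \ge \vartheta M$ with $M = \max_{E'} \eta_\coarse(\tail{E'}{\coarse})$. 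Enumerate the edges of $\overline\MM_\coarse$ in the order $E_1, E_2, \dots, E_K$ in which they are marked, and let $\overline\MM_\coarse^{(k)} := \{E_1, \dots, E_k\}$ be the partial marked set just before $E_{k+1}$ is added; then the increments
\begin{equation*}
 \eta_\coarse\big(\tail{E_{k+1}}{\coarse} \setminus \tail{\overline\MM_\coarse^{(k)}}{\coarse}\big)^2 \ge \vartheta^2 M^2
\end{equation*}
are the building blocks, and because $\tail{\overline\MM_\coarse}{\coarse}$ is the disjoint union over $k$ of the "fresh" tail pieces $\tail{E_{k+1}}{\coarse} \setminus \tail{\overline\MM_\coarse^{(k)}}{\coarse}$, summing the squared estimator over these disjoint edge sets yields $\eta_\coarse(\tail{\overline\MM_\coarse}{\coarse})^2 \ge \vartheta^2 (\#\overline\MM_\coarse) M^2$ — and a slightly sharper bookkeeping (using $m \ge \vartheta M$ rather than its square, as in \cite{dks16}) gives the stated constant $\vartheta$ instead of $\vartheta^2$.

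The observation that upgrades this to arbitrary $\MM_\coarse \subseteq \overline\MM_\coarse$ is monotonicity of tails: if $\MM_\coarse \subseteq \overline\MM_\coarse$, then for any partial marked set $\MM_\coarse^{(k)} \subseteq \overline\MM_\coarse^{(k)}$ one has $\tail{\MM_\coarse^{(k)}}{\coarse} \subseteq \tail{\overline\MM_\coarse^{(k)}}{\coarse}$, hence $\tail{E_{k+1}}{\coarse} \setminus \tail{\overline\MM_\coarse^{(k)}}{\coarse} \subseteq \tail{E_{k+1}}{\coarse} \setminus \tail{\MM_\coarse^{(k)}}{\coarse}$. Concretely, I would restrict attention to the subsequence of those $E_k$ that lie in $\MM_\coarse$; re-run the same disjointness-and-summation argument but now only over this subsequence, with the partial sets $\MM_\coarse^{(j)}$ being the portion of $\MM_\coarse$ marked so far. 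Each such edge was marked because $\eta_\coarse(\tail{E}{\coarse} \setminus \tail{\overline\MM_\coarse^{(k)}}{\coarse}) \ge \vartheta M$, and since the removed tail $\tail{\overline\MM_\coarse^{(k)}}{\coarse}$ is \emph{larger} than $\tail{\MM_\coarse^{(j)}}{\coarse}$, the same edge still satisfies $\eta_\coarse(\tail{E}{\coarse} \setminus \tail{\MM_\coarse^{(j)}}{\coarse}) \ge \vartheta M$; the fresh tail pieces $\tail{E}{\coarse} \setminus \tail{\MM_\coarse^{(j)}}{\coarse}$ over $E \in \MM_\coarse$ are pairwise disjoint and contained in $\tail{\MM_\coarse}{\coarse}$, so summing gives $\eta_\coarse(\tail{\MM_\coarse}{\coarse})^2 \ge \vartheta (\#\MM_\coarse) M^2 = \vartheta (\#\MM_\coarse) \max_{E \in \EE_\coarse} \eta_\coarse(\tail{E}{\coarse})^2$, which is exactly \ref{enum:A:mark}.

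The main obstacle — really the only nontrivial point — is verifying that the "fresh tail pieces" are genuinely pairwise disjoint and that their union is contained in (indeed equals, modulo edges never marked) $\tail{\MM_\coarse}{\coarse}$; this is the combinatorial heart of \cite[Proposition~5.1]{dks16} and relies on the telescoping identity $\tail{\MM_\coarse^{(j+1)}}{\coarse} = \tail{\MM_\coarse^{(j)}}{\coarse} \cup \big(\tail{E_{j+1}}{\coarse} \setminus \tail{\MM_\coarse^{(j)}}{\coarse}\big)$ together with the fact that the right-hand union is disjoint by construction. Since this combinatorial content is already fully contained in the cited proof, and our modification only replaces "the whole $\overline\MM_\coarse$" by "a subset thereof" while keeping the \emph{larger} running tail that Algorithm~\ref{alg:marking} actually uses, the proof is a verbatim repetition of \cite[Proposition~5.1]{dks16} with this one substitution — which is precisely why it suffices to say it "follows from the same arguments." I would therefore present it as a short paragraph: state the enumeration, quote the disjointness lemma from \cite{dks16}, invoke tail-monotonicity to pass from $\overline\MM_\coarse$ to $\MM_\coarse$, and conclude by summation.
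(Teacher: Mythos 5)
Your argument is correct and is exactly the one the paper intends: the paper offers no proof beyond citing \cite[Proposition~5.1]{dks16}, and your reconstruction --- the telescoping decomposition of $\tail{\overline\MM_\coarse}{\coarse}$ into pairwise disjoint fresh pieces $\tail{E_{k+1}}{\coarse}\setminus\tail{\overline\MM_\coarse^{(k)}}{\coarse}$, upgraded to a subset $\MM_\coarse\subseteq\overline\MM_\coarse$ by noting that replacing the running set $\overline\MM_\coarse^{(k)}$ with the smaller $\MM_\coarse^{(j)}$ only enlarges each fresh piece, so the per-edge lower bound $\ge\vartheta M$ survives --- is precisely what ``follows from the same arguments'' means here. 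The one inaccuracy is your claim that ``sharper bookkeeping'' yields $\Cmark=\vartheta$ rather than $\vartheta^2$: summing squares of quantities each bounded below by $\vartheta M$ can only give $(\#\MM_\coarse)\,\vartheta^2 M^2$, and the constant $\vartheta$ in the proposition tacitly assumes the marking test is on squared quantities (as in \cite{dks16}) rather than the unsquared test $m\ge\vartheta M$ written in Algorithm~\ref{alg:marking}; this is an inconsistency of the paper itself and is immaterial for every subsequent use of the result.
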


\subsection{Verification of~\ref{enum:A:mon}: Monotonicity of energy}\label{subsec:A2}

We consider the energy~\eqref{eq:def:abstract:energy} corresponding to a mesh $\TT_\coarse \in \T$ by
\begin{equation}
\label{eq:def:energy}
	\energy(\TT_\coarse)
	:= \frac{1}{2} \, a(u_\coarse,u_\coarse) - F(u_\coarse)
	- \Big[ \, \frac{1}{2} \, a(u,u) - F(u) \, \Big]
	+ \data_\coarse(\TT_\coarse)^2.
\end{equation}
\begin{remark}
	Our definition follows \cite{dks16}, but is shifted to ensure $\energy(\TT_\coarse) \geq 0$ for all $\TT_\coarse \in \T$.
	This is important, since the GOAFEM analysis involves energy products.
\end{remark}
Recall that $u \in H^1_0(\Omega)$ solves the variational formulation~\eqref{eq:weakform} if and only if it minimizes the Dirichlet energy, i.e.,
\begin{align}
 \frac{1}{2} \, a(u,u) - F(u) = \inf_{v \in H^1_0(\Omega)} \Big[ \, \frac{1}{2} \, a(v,v) - F(v) \, \Big].
\end{align}
The same holds (with $H_0^1(\Omega)$ being replaced by $\SS_0^p(\Omega)$) for the Galerkin formulation~\eqref{eq:weakfemform}. By definition~\eqref{eq:def-oscillations} of the data resolutions terms, this proves $\energy(\TT_\coarse) \ge 0$. Moreover, from nestedness $\SS^p(\TT_\coarse) \subseteq \SS^p(\TT_\fine)$, we obtain the monotonicity $\energy(\TT_\fine) \le \energy(\TT_\coarse)$ for all $\TT_\coarse \in \T$ and $\TT_\fine \in \refine{\TT_\coarse}{}$.

\subsection{Verification of~\ref{enum:A:equiv}: Equivalence of energy and total error}\label{subsec:A5}

It is well-known from variational calculus that 
\begin{align}
 \energy(\TT_\coarse) = \frac{1}{2} \, \enorm{u - u_\coarse}^2 + \data_\coarse(\TT_\coarse)^2.
\end{align}
This and~\eqref{eq:data-osc-equivalence} prove~\ref{enum:A:equiv}. Moreover, for $\TT_\fine \in {\rm refine}(\TT_\coarse$), the Galerkin orthogonality proves the identity
\begin{align}\label{eq:galerkin:A5}
 \energy(\TT_\coarse) - \energy(\TT_\fine)
 = \frac{1}{2} \, \enorm{u_\coarse - u_\fine}^2 + \data_\coarse(\TT_\coarse)^2 - \data_\fine(\TT_\fine)^2,
\end{align}
which will be exploited below.

\subsection{Scott--Zhang projector}\label{subsec:scottzhang}

The key ingredient to prove~\ref{enum:A:lde}--\ref{enum:A:dle} is a slight variant~\cite[Lemma~3.5]{dks16} of the Scott--Zhang projector from~\cite{sz90}: Suppose $\TT_\coarse \in \T$ and $\TT_\fine \in \refine{\TT_\coarse}{}$. Let $\LL_\coarse$ denote the set of Lagrange nodes of of $\SS^p_0(\TT_\coarse)$. For each $z \in \LL_\coarse$, choose a simplex $\sigma_{\coarse,z} \in \TT_\coarse \cup \EE_\coarse$ subject to the following constraints:
	\begin{enumerate}[label=\rm(\alph*)]
		\item If $z \in T \in \TT_\coarse$ lies in the interior of $T$, choose $\sigma_{\coarse,z} = T$.
		\label{enum:transfer1}
		
		\item If $z \in E \in \EE_\coarse$ lies in the interior of $E$,  choose $\sigma_{\coarse,z} = E$.
		
		\item If $z \in \VV_\coarse$ with $z \in \area{\TT_\coarse \cap \TT_\fine}$ (resp.\ $z \in \area{\TT_\coarse \setminus \TT_\fine}$), choose $\sigma_{\coarse,z} = E \in \EE_\coarse$ with $E \subseteq \area{\TT_\coarse \cap \TT_\fine}$ (resp.\ $E \subseteq \area{\TT_\coarse \setminus \TT_\fine}$).
		\label{enum:transfer3}
	\end{enumerate}
For a Lagrange point $z \in \LL_\coarse$, let $\phi_{\coarse,z} \in \SS^p(\TT_\fine)$ be the corresponding nodal basis function, i.e., it holds that $\phi_{\coarse,z}(z') = \delta_{zz'}$ for all $z' \in \LL_\coarse$. Moreover, let $\psi_{\coarse,z} \in \PP^{p}(\sigma_{\coarse,z})$ be the corresponding dual basis function with respect to $L^2(\sigma_{\coarse,z})$, i.e., it holds that
\begin{equation}\label{eq:dualproperty}
	\int_{\sigma_{\coarse,z}} \psi_{\coarse,z} \phi_{\coarse,z'} \d{x} = \delta_{zz'}
	\quad \text{for all }
	z, z' \in \LL_\coarse.
\end{equation}
Then, we consider the Scott--Zhang projector $\transfer{\fine}{\coarse} : H^1(\Omega) \to \mathcal{S}^p(\TT_\coarse)$ defined by
\begin{equation}\label{eq:transferdef}
 \transfer{\fine}{\coarse} v
 := \sum_{z \in \LL_\coarse} \phi_{\coarse,z} \int_{\sigma_{\coarse,z}} \psi_{\coarse,z} v \d{x} 
 \quad \text{for all }  v \in H^1(\Omega).
\end{equation}
The following proposition collects the relevant properties of $\transfer{\fine}{\coarse}$.
We note that the definition guarantees that, for $v_\fine \in \SS^p(\TT_\fine)$, the restriction of $\transfer{\fine}{\coarse} v_\fine$ to $\Omega(\TT_\coarse \cap \TT_\fine)$ (resp.\ $\Omega(\TT_\coarse \setminus \TT_\fine)$) depends only on $v_\fine$ restricted to $\Omega(\TT_\coarse \cap \TT_\fine)$ (resp.\ $\Omega(\TT_\coarse \setminus \TT_\fine)$).
This is enforced by the choice \ref{enum:transfer3} of $\sigma_{\coarse,z}$.

\begin{proposition}\label{prop:szproperties}
Let $\TT_\coarse \in \T$ and $\TT_\fine \in \refine{\TT_\coarse}{}$. Let $\UU \in \{ \TT_\coarse \cap \TT_\fine \, , \, \TT_\coarse \setminus \TT_\fine \}$. Then, there hold the following assertions~\ref{enum:szproperties1}--\ref{enum:transferproperties3}, where $\Csz > 0$ depends only on $\Cmesh$ and $p$:
\begin{enumerate}[label=\rm(\roman*)]
\item \label{enum:szproperties1}
$\seminorm{\transfer{\fine}{\coarse} v}{H^1(T)} \leq \Csz \seminorm{v}{H^1(\TT_\coarse(T))}$
\quad for all $T \in \TT_\coarse$ and $v \in H^1(\Omega)$.
\item \label{enum:szproperties3}
$\norm{(1-\transfer{\fine}{\coarse}) v}{L^2(T)}{} \leq \Csz h_{T} \seminorm{v}{H^1(\TT_\coarse(T))}$
\quad for all $T \in \TT_\coarse$ and $v \in H^1(\Omega)$.
\item \label{enum:szproperties4}
$\norm{(1-\transfer{\fine}{\coarse}) v}{L^2(E)}{} \leq \Csz h_{E}^{1/2} \seminorm{v}{H^1(\TT_\coarse(E))}$
\quad for all $E \in \EE_\coarse$ and $v \in H^1(\Omega)$.
\item \label{enum:transferproperties1}
$(\transfer{\fine}{\coarse}v_\fine)|_{\area{\UU}}$ depends only on $v_\fine|_{\area{\UU}}$
\quad for all $v_\fine \in \mathcal{S}^p(\TT_\fine)$.
\item\label{enum:transferproperties2}
$\big(\transfer{\fine}{\coarse}v_\fine - v_\fine \big)|_T = 0$
\quad for all $T \in \TT_\coarse \cap \TT_\fine$ and all $v_\fine \in \mathcal{S}^p(\TT_\fine)$.
\item\label{enum:transferproperties3}
$\big(\transfer{\fine}{\coarse}v_\fine\big)|_\Gamma = 0$
\quad for all $v_\fine \in \mathcal{S}^p_0(\TT_\fine)$.
\end{enumerate}
\end{proposition}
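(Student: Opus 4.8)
\emph{Order of proof.} I would verify the six claims in the order \ref{enum:transferproperties1}, \ref{enum:transferproperties2}, \ref{enum:transferproperties3}, then \ref{enum:szproperties1}--\ref{enum:szproperties4}, because the approximation bounds build on the local $\PP^p$-reproduction, which itself rests on the locality built into the choice of the simplices $\sigma_{\coarse,z}$. For \ref{enum:transferproperties1}, fix $\UU\in\{\TT_\coarse\cap\TT_\fine,\,\TT_\coarse\setminus\TT_\fine\}$ and use that $\area\UU$ is a finite union of closed triangles: a node $z$ contributes to $(\transfer{\fine}{\coarse}v_\fine)|_{\area\UU}$ only if $\phi_{\coarse,z}$ does not vanish identically on $\area\UU$, which — since distinct elements of a conforming mesh meet only in a common vertex or a common edge — forces $z\in\area\UU$; for such $z$, the rules \ref{enum:transfer1}--\ref{enum:transfer3} give $\sigma_{\coarse,z}\subseteq\area\UU$, the interior cases being immediate while in \ref{enum:transfer3} the edge is chosen explicitly inside the relevant region. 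Hence each coefficient $\int_{\sigma_{\coarse,z}}\psi_{\coarse,z}v_\fine$, and therefore the restricted sum, depends only on $v_\fine|_{\area\UU}$.

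\emph{Reproduction and boundary values.} Since the Lagrange nodes of $\SS^p(\TT_\coarse)$ lying on $\sigma_{\coarse,z}$ are unisolvent for $\PP^p(\sigma_{\coarse,z})$ (lattice points of a triangle, resp.\ of an edge), the dual property \eqref{eq:dualproperty} upgrades to the exact identity $\int_{\sigma_{\coarse,z}}\psi_{\coarse,z}q=q(z)$ for all $q\in\PP^p(\sigma_{\coarse,z})$. For \ref{enum:transferproperties2}, let $T\in\TT_\coarse\cap\TT_\fine$; every node $z$ with $\phi_{\coarse,z}|_T\neq0$ lies in $\area{\TT_\coarse\cap\TT_\fine}$ and, by \ref{enum:transfer1}--\ref{enum:transfer3}, has $\sigma_{\coarse,z}\subseteq\area{\TT_\coarse\cap\TT_\fine}$; an edge contained in this unrefined region is not bisected in $\TT_\fine$ (otherwise its adjacent elements would not belong to $\TT_\coarse\cap\TT_\fine$), so $v_\fine$ is a polynomial of degree $\le p$ on $\sigma_{\coarse,z}$ and the exact identity yields $\int_{\sigma_{\coarse,z}}\psi_{\coarse,z}v_\fine=v_\fine(z)$. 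Thus $(\transfer{\fine}{\coarse}v_\fine)|_T=\sum_z v_\fine(z)\,\phi_{\coarse,z}|_T$ is the $\PP^p(T)$-Lagrange interpolant of $v_\fine|_T\in\PP^p(T)$, i.e.\ equals $v_\fine|_T$. Claim \ref{enum:transferproperties3} is of the same flavour: for a Lagrange node $z$ on $\Gamma$ the simplex $\sigma_{\coarse,z}$ is chosen on $\Gamma$, so $v_\fine\in\SS^p_0(\TT_\fine)$ gives a vanishing coefficient, whence $\transfer{\fine}{\coarse}v_\fine$ vanishes at all Lagrange nodes on $\Gamma$ and hence on $\Gamma$.

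\emph{Stability and approximation.} Here I would run the classical Scott--Zhang scaling argument \cite{sz90}. The only point requiring attention beyond the case $p=1$ of \cite{dks16} is the scaling bound $\norm{\psi_{\coarse,z}}{L^2(\sigma_{\coarse,z})}\lesssim|\sigma_{\coarse,z}|^{-1/2}$, valid because $\psi_{\coarse,z}$ is the affine pull-back of a fixed reference dual basis function, with constant depending only on $p$. Given $T\in\TT_\coarse$, set $\omega:=\area{\TT_\coarse(T)}$ and pick $q\in\PP^p(\omega)$ with $\norm{v-q}{L^2(\omega)}+h_T\seminorm{v-q}{H^1(\omega)}\lesssim h_T\seminorm{v}{H^1(\omega)}$ (Bramble--Hilbert/Poincaré). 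Local reproduction of $\PP^p$ — from the exact quadrature above together with the partition of unity $\sum_z\phi_{\coarse,z}\equiv1$ (near $\Gamma$ the constant-subtraction step is replaced by a Friedrichs inequality) — gives $(\transfer{\fine}{\coarse}v)|_T=q|_T+\bigl(\transfer{\fine}{\coarse}(v-q)\bigr)|_T$. Estimating $\bigl(\transfer{\fine}{\coarse}(v-q)\bigr)|_T=\sum_z\phi_{\coarse,z}|_T\int_{\sigma_{\coarse,z}}\psi_{\coarse,z}(v-q)$ termwise with Cauchy--Schwarz, the $\psi$-bound, the inverse estimates $\norm{\nabla\phi_{\coarse,z}}{L^2(T)}\lesssim1$ and $\norm{\phi_{\coarse,z}}{L^2(T)}\lesssim h_T$, a trace inequality when $\sigma_{\coarse,z}$ is an edge, and the bound on $v-q$, and finally summing over the (by $\Cmesh$ uniformly bounded) number of contributing nodes, yields \ref{enum:szproperties1}--\ref{enum:szproperties4} with $\Csz$ depending only on $\Cmesh$ and $p$.

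\emph{Main obstacle.} The technical heart is \ref{enum:szproperties1}--\ref{enum:szproperties4} for general $p\ge1$: one must track the correct scaling of the dual basis functions $\psi_{\coarse,z}$ both on edges and on triangles, absorb the two possible dimensions of $\sigma_{\coarse,z}$ through trace inequalities, and check that every constant depends only on $\Cmesh$ and $p$ (and not on $\TT_\coarse$ or $\TT_\fine$). A secondary, combinatorial subtlety is to show cleanly in \ref{enum:transferproperties1}--\ref{enum:transferproperties2} that no $\sigma_{\coarse,z}$ straddles the interface between $\area{\TT_\coarse\cap\TT_\fine}$ and $\area{\TT_\coarse\setminus\TT_\fine}$ — which is precisely what the non-standard rule \ref{enum:transfer3} is designed to guarantee.
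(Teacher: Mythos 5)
Your proposal is correct and follows exactly the route the paper delegates to its references: items \ref{enum:szproperties1}--\ref{enum:szproperties4} are the classical Scott--Zhang scaling/Bramble--Hilbert argument of \cite{sz90} (with the dual-basis scaling $\norm{\psi_{\coarse,z}}{L^2(\sigma_{\coarse,z})}\lesssim|\sigma_{\coarse,z}|^{-1/2}$ being the only $p$-dependent ingredient), and items \ref{enum:transferproperties1}--\ref{enum:transferproperties3} are the locality/reproduction argument of \cite[Lemma~3.5]{dks16}, which transfers verbatim to $p\ge1$ just as you argue. The only point worth making explicit is that for claim \ref{enum:transferproperties3} one must additionally adopt the standard Scott--Zhang convention that boundary Lagrange nodes receive a simplex $\sigma_{\coarse,z}\subset\Gamma$, which the constraints \ref{enum:transfer1}--\ref{enum:transfer3} as stated do not literally enforce but which you correctly use.
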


\begin{proof}
The claims~\ref{enum:szproperties1}--\ref{enum:szproperties4} are proved in~\cite{sz90}. For~\ref{enum:transferproperties1}--\ref{enum:transferproperties3}, we refer to~\cite[Lemma~3.5]{dks16} (which directly transfers from $p = 1$ to $p \ge 1$).
\end{proof}

\subsection{Verification of~\ref{enum:A:lde}: Diamond estimate}\label{subsec:energy}
In order to prove the diamond estimate~\ref{enum:A:lde}, we employ the Scott--Zhang projector from the previous section.
The following lemma is proved in~\cite[Theorem~3.7]{dks16} (which directly transfers from $p=1$ to $p \geq 1$).

\begin{lemma}
Let $(\TT_\coarse,\TT_\fine; \TT_1, \ldots, \TT_m) \in \T^{m+2}$ be a diamond and $p\in\N$.
Then, the Scott--Zhang projectors $\transfer{\fine}{i}$ commute pairwise and the projection 
\begin{align}
 \transfer{\fine}{\circ} := \transfer{\fine}{1} \circ \ldots \circ \transfer{\fine}{m}: \mathcal{S}^p_0(\TT_\fine) \to \mathcal{S}^p_0(\TT_\coarse)
\end{align} 
is well-defined and satisfies that
\begin{equation}
 \seminorm{\transfer{\fine}{\circ} v_\fine}{H^1(\Omega)} \leq C \seminorm{v_\fine}{H^1(\Omega)}
 \quad \text{for all } v_\fine \SS^p_0(\TT_\fine),
\end{equation}
where $C > 0$ depends only on $\Cshapereg$ and $p$. Moreover, with $\Omega_i := \area{\TT_i \setminus \TT_\fine}$ for $i=1,\ldots,m$, it holds that
\begin{equation}\label{eq:transferareas}
 \transfer{\fine}{\circ} v_\fine =
		\left\{
		\begin{array}{ll}
			\transfer{\fine}{i} v_\fine & \textrm{ on } \Omega_i,\\
			v_\fine & \textrm{ on } \Omega \setminus \bigcup_{i=1}^m \Omega_i.
		\end{array}
		\right.
	\end{equation}
	\hfill $\square$
\end{lemma}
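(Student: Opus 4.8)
The plan is to deduce everything from the single-level properties of the Scott--Zhang operator collected in Proposition~\ref{prop:szproperties}, combined with the disjointness of the regions $\Omega_i := \area{\TT_i \setminus \TT_\fine}$ that is built into the definition of a diamond. First I would record two purely combinatorial facts that follow directly from the explicit descriptions of $\TT_\coarse$ and $\TT_\fine$ given above: each $\TT_i$ coincides with the overlay $\TT_\fine$ on $\Omega \setminus \Omega_i$ and with the common coarsening $\TT_\coarse$ on $\Omega_i$; consequently $\TT_\coarse = \TT_\fine$ on $\Omega \setminus \bigcup_{i=1}^{m}\Omega_i$, every element of $\TT_\coarse$ is contained in exactly one of the regions $\Omega_1,\dots,\Omega_m,\Omega\setminus\bigcup_i\Omega_i$, and $\SS^p(\TT_i)\subseteq\SS^p(\TT_\fine)$. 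Since $\TT_\fine$ is the coarsest common refinement, $\TT_\fine\in\refine{\TT_i}{}$, so the operator $\transfer{\fine}{i}$ from Section~\ref{subsec:scottzhang} is well defined for every $i$. The one further observation I need is an immediate consequence of item~\ref{enum:transferproperties2} of Proposition~\ref{prop:szproperties}: for every $w\in\SS^p(\TT_\fine)$ one has $\transfer{\fine}{i}w = w$ on $\area{\TT_i\cap\TT_\fine}=\Omega\setminus\Omega_i$, i.e.\ $\transfer{\fine}{i}$ only changes its argument inside $\Omega_i$.

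Next I would prove well-definedness of the composition and formula~\eqref{eq:transferareas} by induction on the number of factors. Put $w_{m+1}:=v_\fine$ and $w_k:=\transfer{\fine}{k}w_{k+1}$ for $k=m,m-1,\dots,1$, and suppose $w_{k+1}\in\SS^p(\TT_\fine)$ satisfies $w_{k+1}=\transfer{\fine}{i}v_\fine$ on $\Omega_i$ for all $i>k$ and $w_{k+1}=v_\fine$ on $\Omega\setminus\bigcup_{i>k}\Omega_i$. Since $\transfer{\fine}{k}$ maps into $\SS^p(\TT_k)\subseteq\SS^p(\TT_\fine)$, also $w_k\in\SS^p(\TT_\fine)$, and by item~\ref{enum:transferproperties3} homogeneous boundary values are preserved, so $v_\fine\in\SS^p_0(\TT_\fine)$ forces $w_k\in\SS^p_0(\TT_\fine)$. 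For $i>k$ the disjointness gives $\Omega_i\subseteq\Omega\setminus\Omega_k$, hence $w_k=w_{k+1}=\transfer{\fine}{i}v_\fine$ there by the observation above and the induction hypothesis; likewise $w_k=w_{k+1}=v_\fine$ on $\Omega\setminus\bigcup_{i\ge k}\Omega_i$. On $\Omega_k$ the induction hypothesis yields $w_{k+1}=v_\fine$ (because $\Omega_k\subseteq\Omega\setminus\bigcup_{i>k}\Omega_i$), so the locality item~\ref{enum:transferproperties1} applied with $\UU=\TT_k\setminus\TT_\fine$ gives $w_k|_{\Omega_k}=\transfer{\fine}{k}(v_\fine)|_{\Omega_k}$. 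This closes the induction, and $w_1=\transfer{\fine}{\circ}v_\fine$ is precisely~\eqref{eq:transferareas}. Pairwise commutativity of the $\transfer{\fine}{i}$ on $\SS^p(\TT_\fine)$ follows from the two-factor version of the same argument: for $i\ne j$ both $\transfer{\fine}{i}\transfer{\fine}{j}v_\fine$ and $\transfer{\fine}{j}\transfer{\fine}{i}v_\fine$ equal $\transfer{\fine}{i}v_\fine$ on $\Omega_i$, $\transfer{\fine}{j}v_\fine$ on $\Omega_j$, and $v_\fine$ elsewhere, so they agree; in particular $\transfer{\fine}{\circ}$ is independent of the ordering.

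It remains to identify the target space and to establish the stability bound. By~\eqref{eq:transferareas} and the structural facts, on each $\Omega_i$ the function $\transfer{\fine}{\circ}v_\fine$ is a polynomial on every element of $\TT_i=\TT_\coarse$, while on $\Omega\setminus\bigcup_i\Omega_i$ it equals $v_\fine$, which is a piecewise polynomial with respect to $\TT_\fine=\TT_\coarse$ there; as each $\TT_\coarse$-element lies in a single region, $\transfer{\fine}{\circ}v_\fine\in\PP^p(\TT_\coarse)$, and being continuous and vanishing on $\Gamma$ (item~\ref{enum:transferproperties3}) it lies in $\SS^p_0(\TT_\coarse)$. For stability I would again split, via~\eqref{eq:transferareas}, the quantity $\seminorm{\transfer{\fine}{\circ}v_\fine}{H^1(\Omega)}^2$ into the contributions of the $\Omega_i$ and of $\Omega\setminus\bigcup_i\Omega_i$; the latter equals the corresponding piece of $\seminorm{v_\fine}{H^1(\Omega)}^2$, while item~\ref{enum:szproperties1} bounds the contribution of $\Omega_i$ by $\Csz^2$ times $\seminorm{v_\fine}{H^1(\omega_i)}^2$, where $\omega_i$ is the union of the $\TT_i$-patches of the elements contained in $\Omega_i$. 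Since the $\Omega_i$ are pairwise disjoint and the hierarchy $\T$ is uniformly shape regular, these enlargements $\omega_i$ have finite overlap with a bound depending only on $\Cshapereg$, so summing gives $\seminorm{\transfer{\fine}{\circ}v_\fine}{H^1(\Omega)}\le C\,\seminorm{v_\fine}{H^1(\Omega)}$ with $C$ depending only on $\Cshapereg$ and $p$. I expect the main obstacle to be the bookkeeping in the induction: verifying that every partial composition stays in $\SS^p(\TT_\fine)$ (so that items~\ref{enum:transferproperties1}--\ref{enum:transferproperties3} of Proposition~\ref{prop:szproperties}, which are stated only for $\TT_\fine$-functions, remain applicable) and keeping track of which region each factor acts on; the patch-overlap estimate in the stability bound is routine but likewise relies on shape-regularity of the whole hierarchy.
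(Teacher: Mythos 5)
Your argument is correct, and it essentially reconstructs the proof of \cite[Theorem~3.7]{dks16}, which is all the paper itself offers for this lemma (it is stated with a citation and no proof). The induction establishing \eqref{eq:transferareas}, the derivation of commutativity from the two-factor case, and the identification of the target space via the combinatorial facts about diamonds all check out against Proposition~\ref{prop:szproperties}\ref{enum:szproperties1},\ref{enum:transferproperties1}--\ref{enum:transferproperties3}.

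The one step whose justification is thinner than it should be is the finite-overlap claim for the enlarged sets $\omega_i$ in the stability estimate. Pairwise disjointness of the $\Omega_i$ together with shape regularity of the hierarchy does \emph{not} by itself bound the overlap of one-element-layer enlargements taken in $m$ different meshes: if the layers around disjoint regions at many different scales were built from elements of unrelated triangulations, a single point could lie in arbitrarily many $\omega_i$. What saves the argument is precisely the structural fact you record in your preliminaries but do not invoke at this point: $\TT_i$ coincides with $\TT_\fine$ on $\Omega \setminus \Omega_i$, so the layer $\omega_i \setminus \Omega_i$ consists of elements of the \emph{single} mesh $\TT_\fine$ touching $\Omega_i$. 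Since each $\Omega_i$ is a union of $\TT_\fine$-elements and the $\Omega_i$ are disjoint, any $T \in \TT_\fine$ can meet at most as many regions $\Omega_i$ as its $\TT_\fine$-patch has elements, and any point lies in boundedly many $\TT_\fine$-elements; both bounds depend only on $\Cshapereg$, which yields the overlap constant. With that sentence added, the proof is complete.
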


With this auxiliary result, we can prove the diamond estimate~\ref{enum:A:lde}.

\begin{proposition}\label{prop:lde}
The diamond estimate~{\rm\ref{enum:A:lde}} holds with a constant $\Clde > 0$ depending only on $\Cshapereg$, $p$, and $A$.
\end{proposition}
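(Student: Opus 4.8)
The plan is to prove the diamond estimate~\ref{enum:A:lde} by combining the energy-difference identity from Section~\ref{subsec:A5} with the properties of the commuting Scott--Zhang projector $\transfer{\fine}{\circ}$ from the previous lemma, following closely the argument of~\cite[Theorem~3.7]{dks16} but tracking the data resolution terms $\data$ carefully so that the shift in~\eqref{eq:def:energy} is accounted for. Recall from~\eqref{eq:galerkin:A5} that for any $\TT_\coarse \in \T$ and $\TT_\fine \in \refine{\TT_\coarse}{}$ we have
\begin{equation*}
	\energy(\TT_\coarse) - \energy(\TT_\fine)
	= \tfrac12 \, \enorm{u_\coarse - u_\fine}^2 + \data_\coarse(\TT_\coarse)^2 - \data_\fine(\TT_\fine)^2 .
\end{equation*}
The key observation is that the data resolution terms localize: $\data_\coarse(\TT_\coarse)^2 - \data_\fine(\TT_\fine)^2 = \data_\coarse(\TT_\coarse \setminus \TT_\fine)^2 - \data_\fine(\TT_\fine \setminus \TT_\coarse)^2$, since the elements and edges of $\TT_\coarse \cap \TT_\fine$ contribute identically (the $L^2$-projections and the data agree there). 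This, together with $\data_\fine(\TT_\fine \setminus \TT_\coarse) \lesssim \data_\coarse(\TT_\coarse \setminus \TT_\fine)$ and $\data_\coarse(\TT_\coarse \setminus \TT_\fine) \lesssim \enorm{u_\coarse - u_\fine} + \data_\coarse(\TT_\coarse \setminus \TT_\fine)$-type estimates, means that the sole nontrivial point is to compare the Galerkin increments $\enorm{u_\coarse - u_\fine}^2$ across the diamond.

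For the Galerkin increment I would argue as follows. By the Galerkin orthogonality, $\enorm{u_\coarse - u_\fine} = \min_{v_\coarse \in \SS^p_0(\TT_\coarse)} \enorm{v_\coarse - u_\fine}$; choosing $v_\coarse = \transfer{\fine}{\circ} u_\fine$ and using~\eqref{eq:transferareas}, the difference $\transfer{\fine}{\circ} u_\fine - u_\fine$ is supported on $\bigcup_{i=1}^m \Omega_i$ with the pieces $\Omega_i$ pairwise disjoint, and on each $\Omega_i$ it equals $\transfer{\fine}{i} u_\fine - u_\fine$. Hence $\enorm{u_\coarse - u_\fine}^2 \le \sum_{i=1}^m \enorm{(\transfer{\fine}{i} - 1) u_\fine}_{\Omega_i}^2$, and each summand is in turn controlled by $\enorm{u_i - u_\fine}^2$ (since $u_i$ is the Galerkin solution on $\TT_i$ and $\transfer{\fine}{i} u_\fine \in \SS^p_0(\TT_i)$ agrees with $u_\fine$ outside $\Omega_i$, using Proposition~\ref{prop:szproperties}\ref{enum:transferproperties2}). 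This gives the upper bound $\energy(\TT_\coarse) - \energy(\TT_\fine) \lesssim \sum_{j=1}^m [\energy(\TT_j) - \energy(\TT_\fine)]$. For the reverse inequality, $\energy(\TT_j) - \energy(\TT_\fine) \lesssim \energy(\TT_\coarse) - \energy(\TT_\fine)$ holds summand-wise from monotonicity~\ref{enum:A:mon} of $\energy$ together with the fact that the refinements $\TT_j \setminus \TT_\fine$ occupy disjoint regions, so that by~\ref{enum:A:dle} (the local energy estimate, proved next in Proposition~\ref{prop:dle}) the increments $\est_\coarse(\EE_\coarse \setminus \EE_j)^2$ add up to at most $\est_\coarse(\EE_\coarse \setminus \EE_\fine)^2$ up to the constant $\Cest^2$; alternatively one invokes the localization of $\data$ directly and a telescoping over the $\Omega_i$.

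The main obstacle is the higher-order generalization: in~\cite{dks16} the argument is carried out for $p=1$, where the data resolution term involves only $f+\div\f$ on elements and the normal jump of $\f$ on edges with trivial projections, whereas here for $p \ge 1$ one must handle the $L^2$-projections $\Pi_T$, $\Pi_E$ in~\eqref{eq:def-oscillations} and verify that the cancellation $\data_\coarse(\TT_\coarse)^2 - \data_\fine(\TT_\fine)^2 = \data_\coarse(\TT_\coarse \setminus \TT_\fine)^2 - \data_\fine(\TT_\fine \setminus \TT_\coarse)^2$ truly holds — i.e.\ that the projections on an unrefined element or edge are exactly the same for $\TT_\coarse$ and $\TT_\fine$. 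This is true because $\PP^{p-2}(T)$ and $\PP^{p-1}(E)$ depend only on the simplex $T$, $E$ and not on the surrounding mesh, but it must be stated cleanly. The remaining technical work — the approximation and stability estimates for $\transfer{\fine}{i}$ and the bound $\data_\fine(\TT_\fine \setminus \TT_\coarse) \lesssim \enorm{u_\coarse - u_\fine} + \data_\coarse(\TT_\coarse \setminus \TT_\fine)$ via~\eqref{eq:osc-eta} and the efficiency-type estimate — is routine once Proposition~\ref{prop:szproperties} is in place, with constants depending only on $\Cshapereg$, $p$, and $A$.
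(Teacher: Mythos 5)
Your overall architecture is the right one and matches the paper's: use the identity \eqref{eq:galerkin:A5} to split the energy drop into the Galerkin increment plus a difference of data terms, treat the Galerkin increments via the composite Scott--Zhang projector $\transfer{\fine}{\circ}$ and the disjointness of the $\Omega_i$, and localize the data terms. The upper-bound direction for the Galerkin part ($\enorm{u_\coarse-u_\fine}^2\lesssim\sum_i\enorm{u_i-u_\fine}^2$ via C\'ea and \eqref{eq:transferareas}) is correct. However, there are two concrete gaps. First, for the data terms you only assert $\data_\fine(\TT_\fine\setminus\TT_\coarse)\lesssim\data_\coarse(\TT_\coarse\setminus\TT_\fine)$ with an unspecified constant (and your second ``estimate'' $\data_\coarse(\TT_\coarse\setminus\TT_\fine)\lesssim\enorm{u_\coarse-u_\fine}+\data_\coarse(\TT_\coarse\setminus\TT_\fine)$ is vacuous). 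A one-sided bound with constant possibly $\ge 1$ is useless here: the difference $\data_\coarse(\TT_\coarse\setminus\TT_\fine)^2-\data_\fine(\TT_\fine\setminus\TT_\coarse)^2$ could then be zero while $\data_\coarse(\TT_\coarse\setminus\TT_\fine)^2$ is large, and the two-sided equivalence needed on both meshes $\TT_\fine$ and $\TT_i$ in the diamond would fail. What is actually required --- and what the paper proves as \eqref{eq:prop:lde} --- is a strict contraction $\data_\fine(\TT_\fine\setminus\TT_\coarse)^2\le\tfrac{1}{\sqrt2}\,\data_\coarse(\TT_\coarse\setminus\TT_\fine)^2$, obtained from the NVB halving $|T'|\le\tfrac12|T|$ of element areas combined with the fact that $\jump{\f\cdot\normal}$ vanishes on newly created interior edges (using $\div\f\in L^2(T)$ for $T\in\TT_0$); this yields $\data_\bullet(\TT_\bullet)^2-\data_\circ(\TT_\circ)^2\ge(1-\tfrac{1}{\sqrt2})\,\data_\bullet(\TT_\bullet\setminus\TT_\circ)^2$, which is the missing lower bound.

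Second, your reverse inequality for the Galerkin increments does not go through as written. Summand-wise monotonicity \ref{enum:A:mon} gives $\energy(\TT_j)-\energy(\TT_\fine)\le\energy(\TT_\coarse)-\energy(\TT_\fine)$, but summing over $j$ produces a factor $m$, which is not admissible since $\Clde$ must be independent of the diamond. Your fallback via \ref{enum:A:dle} also misfires: that property would control $\energy(\TT_\coarse)-\energy(\TT_j)$ through $\est_\coarse(\EE_\coarse\setminus\EE_j)$, not $\energy(\TT_j)-\energy(\TT_\fine)$, and the relevant local estimators for the latter live on the different meshes $\TT_j$ with different discrete solutions $u_j$, so they do not simply ``add up to'' $\est_\coarse(\EE_\coarse\setminus\EE_\fine)^2$. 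The clean argument (as in the paper's Step~1) is the two-sided equivalence $\enorm{u_\fine-u_i}\simeq\enorm{(1-\transfer{\fine}{i})u_\fine}$ for each $i$ (C\'ea in one direction, $H^1$-stability of $\transfer{\fine}{i}$ applied to $u_\fine-u_i$ in the other), combined with the exact identity $\enorm{u_\fine-\transfer{\fine}{\circ}u_\fine}^2=\sum_{i=1}^m\enorm{u_\fine-\transfer{\fine}{i}u_\fine}^2$ from \eqref{eq:transferareas}; this gives $\enorm{u_\fine-u_\coarse}^2\simeq\sum_i\enorm{u_\fine-u_i}^2$ in both directions with constants depending only on $\Cshapereg$, $p$, and $A$. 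Your remark on the $p$-dependence of the projections $\Pi_T,\Pi_E$ (that they are intrinsic to the simplex and hence cancel on $\TT_\coarse\cap\TT_\fine$) is correct and is indeed the point where the generalization from $p=1$ needs care.
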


\begin{proof}[Sketch of proof]
The proof is split into three steps.

{\bf Step~1.} From the best approximation property of FEM solutions with respect to the energy norm and the stability of Scott--Zhang operators, we infer that
	\begin{equation*}
		\enorm[]{u_{\fine} - u_{i}}
		\simeq
		\enorm[]{u_{\fine} - \transfer{\fine}{i} u_{\fine}}
		\quad \text{for all }
		i=1,\ldots,m;
	\end{equation*}
	see also~\cite[Lemma~3.4]{dks16}.
	This equivalence holds also for $u_{\coarse}$ and $\transfer{\fine}{\circ}$ instead of $u_{i}$ and $\transfer{\fine}{i}$, respectively.
	Together with \eqref{eq:transferareas} and Proposition~\ref{prop:szproperties}\ref{enum:transferproperties2}, we obtain that
	\begin{equation}\label{eq:lde:step1}
		\enorm[]{u_{\fine} - u_{\coarse}}^2
		\simeq
		\enorm[]{u_{\fine} - \transfer{\fine}{\circ} u_{\fine}}^2
		\reff{eq:transferareas}{=}
		\sum_{i=1}^{m} \enorm[]{u_{\fine} - \transfer{\fine}{i} u_{\fine}}^2 \\
		\simeq
		\sum_{i=1}^{m} \enorm[]{u_{\fine} -  u_{i}}^2.
	\end{equation}

{\bf Step~2.} Let $\TT_\bullet \in \T$ and $\TT_\circ \in \refine{\TT_\bullet}{}$.
	Then, newest vertex bisection guarantees that
	\begin{equation*}
		|T'| \leq \tfrac{1}{2} |T|
		\quad \text{for all} ~
		T \in \TT_{\bullet} \setminus \TT_\circ
		\text{ and }
		T' \in \TT_{\circ} \setminus \TT_\bullet
		\text{ with }
		T' \subset T.
	\end{equation*}
	Together with the fact that $\jump{\f \cdot \normal}$ vanishes on all newly created edges, since $\div \f \in L^2(T)$ for every $T \in \TT_0$, this shows the equivalence
	\begin{equation}
	\label{eq:prop:lde}
		\big( 1 - \tfrac{1}{\sqrt{2}} \big) \data_\bullet(\TT_\bullet \setminus \TT_\circ)^2
		\leq
		\data_\bullet(\TT_\bullet)^2 - \data_\circ(\TT_\circ)^2
		\leq
		\data_\bullet(\TT_\bullet \setminus \TT_\circ)^2.
	\end{equation}

{\bf Step~3.} 	
We use \eqref{eq:prop:lde} on the meshes $\TT_\fine, \TT_i \in {\rm refine}(\TT_\coarse)$.
	This yields that
	\begin{align*}
		&\data_\coarse(\TT_\coarse)^2 - \data_\fine(\TT_\fine)^2
		\stackrel{\eqref{eq:prop:lde}}{\simeq}
		\data_\coarse(\TT_\coarse \setminus \TT_\fine,)^2
		=
		\data_\coarse\Big(\bigcup_{i=1}^m (\TT_i \setminus \TT_\fine) \Big)^2 
		\\& \quad
		=
		\sum_{i=1}^m \data_\coarse(\TT_i \setminus \TT_\fine)^2
		\stackrel{\eqref{eq:prop:lde}}{\simeq}
		\sum_{i=1}^m \big[ \, \data_i(\TT_i)^2 - \data_\fine(\TT_\fine)^2 \, \big].
	\end{align*}
Together with~\eqref{eq:lde:step1}, we see that
\begin{align*}
 \energy(\TT_\coarse) - \energy(\TT_\fine)
 &\reff{eq:galerkin:A5}{=} \frac{1}{2} \, \enorm{u_\coarse - u_\fine}^2 + \data_\coarse(\TT_\coarse)^2 - \data_\fine(\TT_\fine)^2
 \\&
 \reff{eq:lde:step1}\simeq \sum_{i=1}^m \big[ \, \frac{1}{2} \enorm[]{u_{i} -  u_{\fine}}^2 + \data_i(\TT_i) - \data_\fine(\TT_\fine)^2 \, \big]
 \reff{eq:galerkin:A5}{=} \sum_{i=1}^m \big[ \, \energy(\TT_i) - \energy(\TT_h) \, \big].
\end{align*}
This concludes the proof.
\end{proof}

\subsection{Verification of~\ref{enum:A:dle}: Local energy estimates for the estimator}\label{subsec:dle}
Since we have already verified \eqref{eq:galerkin:A5}, it suffices to show the discrete local estimates \ref{enum:A:dle} for the total error.

\subsubsection{\bfseries Discrete reliability}
We note that
\begin{equation*}
	\data_\coarse(\TT_\coarse)^2 - \data_\fine(\TT_\fine)^2
	\stackrel{\eqref{eq:prop:lde}}{\simeq}
	\data_\coarse(\TT_\coarse \setminus \TT_\fine)^2
	\stackrel{\eqref{eq:osc-eta}}{\leq}
	\eta_\coarse(\EE_\coarse \setminus \EE_\fine)^2.
\end{equation*}
Thus, the next proposition shows the upper bound in \ref{enum:A:dle}.
\begin{proposition}
	Let $\TT_\coarse \in \T$ and $\TT_\fine \in \refine{\TT_\coarse}{}$.
	Let $p \in \N$.
	Then, it holds that
	\begin{equation}
	\label{eq:discreteReliability}
		\enorm{u_\coarse - u_{\fine}}^2
		\leq
		\Cdrel \, \est_\coarse(\EE_\coarse \setminus \EE_\fine)^2.
	\end{equation}
	The constant $\Cdrel > 0$ depends only on $\Cshapereg$, $p$, and $A$.
\end{proposition}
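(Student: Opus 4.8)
The plan is to run the classical residual argument for discrete reliability with a Scott--Zhang quasi-interpolant of the discrete error as test function; the locality properties collected in Proposition~\ref{prop:szproperties} will confine the residual to the refined region. I organize the argument in three steps.

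\textbf{Step~1 (residual representation).} I would set $v_\fine := u_\fine - u_\coarse \in \SS^p_0(\TT_\fine)$ and $w := (1-\transfer{\fine}{\coarse})v_\fine$. Since $\transfer{\fine}{\coarse}v_\fine \in \SS^p_0(\TT_\coarse)$ by Proposition~\ref{prop:szproperties}\ref{enum:transferproperties3}, the Galerkin orthogonality $a(u_\fine-u_\coarse,\cdot) = 0$ on $\SS^p_0(\TT_\coarse)$ yields $\enorm{u_\coarse - u_\fine}^2 = a(u_\fine-u_\coarse,v_\fine) = a(u_\fine-u_\coarse,w) = F(w) - a(u_\coarse,w)$. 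Integrating by parts element-wise over $\TT_\coarse$ (the boundary contributions on $\Gamma$ vanish because $w|_\Gamma = 0$, again by Proposition~\ref{prop:szproperties}\ref{enum:transferproperties3}) then represents $\enorm{u_\coarse-u_\fine}^2$ as a sum of volume residuals $\int_T (f + \div(A\nabla u_\coarse + \f))\,w\d{x}$ over $T \in \TT_\coarse$ and edge jumps $\int_E \jump{(A\nabla u_\coarse + \f)\cdot\normal}\,w\d{s}$ over interior edges $E \in \EE_\coarse^{\Omega}$.

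\textbf{Step~2 (localization to the refined edges).} The key observation is that, by Proposition~\ref{prop:szproperties}\ref{enum:transferproperties2}, $w$ vanishes on every unchanged element $T \in \TT_\coarse \cap \TT_\fine$, and since $w$ is continuous it therefore also vanishes on every coarse edge bounding such an element. Hence only volume terms over $T \in \TT_\coarse \setminus \TT_\fine$ and jump terms over edges adjacent to $\TT_\coarse \setminus \TT_\fine$ survive in Step~1. I would then verify that each surviving datum is controlled by $\est_\coarse(E)^2$ for a suitable refined edge $E \in \EE_\coarse \setminus \EE_\fine$: newest vertex bisection forces every $T \in \TT_\coarse \setminus \TT_\fine$ to have its bisected reference edge in $\EE_\coarse \setminus \EE_\fine$, and by~\eqref{eq:def-eta} the estimator of that edge already contains the volume residual of $T$ (since $T \in \TT_\coarse^{\rm red}(E)$); the surviving jump terms are handled by a matching argument exploiting the admissibility of $\TT_0$ and shape-regularity. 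Together with the uniformly bounded overlap of the patches $\TT_\coarse(T)$ and $\TT_\coarse(E)$ (a consequence of $\Cshapereg$), this gives that the sum of all surviving volume and jump data is $\lesssim \est_\coarse(\EE_\coarse \setminus \EE_\fine)^2$. I expect this bookkeeping --- matching the support of $w$ exactly to $\EE_\coarse \setminus \EE_\fine$ via the NVB and admissibility structure --- to be the main obstacle.

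\textbf{Step~3 (conclusion).} Applying the Cauchy--Schwarz inequality to the surviving terms of Step~1 and the Scott--Zhang bounds $\norm{w}{L^2(T)} \lesssim |T|^{1/2}\seminorm{v_\fine}{H^1(\TT_\coarse(T))}$ and $\norm{w}{L^2(E)} \lesssim |E|^{1/2}\seminorm{v_\fine}{H^1(\TT_\coarse(E))}$ from Proposition~\ref{prop:szproperties}\ref{enum:szproperties3}--\ref{enum:szproperties4}, then summing with bounded overlap and invoking Step~2, I obtain $\enorm{u_\coarse - u_\fine}^2 \lesssim \est_\coarse(\EE_\coarse \setminus \EE_\fine)\,\seminorm{v_\fine}{H^1(\Omega)} \simeq \est_\coarse(\EE_\coarse \setminus \EE_\fine)\,\enorm{u_\coarse - u_\fine}$, where I used $\seminorm{\cdot}{H^1(\Omega)} \simeq \enorm{\cdot}$ on $H^1_0(\Omega)$. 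Dividing by $\enorm{u_\coarse - u_\fine}$ (the estimate being trivial if it vanishes) and squaring yields~\eqref{eq:discreteReliability}, with $\Cdrel$ depending only on $\Cshapereg$, $p$, and $A$.
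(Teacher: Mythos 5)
Your proposal follows essentially the same route as the paper's own (sketched) proof: Galerkin orthogonality tested with $(1-\transfer{\fine}{\coarse})(u_\fine-u_\coarse)$, $\TT_\coarse$-elementwise integration by parts, localization to the refined region via Proposition~\ref{prop:szproperties}\ref{enum:transferproperties2}, and then Cauchy--Schwarz combined with the approximation estimates \ref{enum:szproperties3}--\ref{enum:szproperties4} and the fact that every bisected element contributes its volume residual to the indicator of its (necessarily bisected) reference edge. The bookkeeping you flag in Step~2 --- in particular matching the surviving jump contributions on edges shared by two refined elements to indicators of edges in $\EE_\coarse\setminus\EE_\fine$ --- is precisely what the paper also suppresses under ``standard estimates'', so the two arguments coincide in structure and in level of detail.
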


\begin{proof}[Sketch of proof]
	Recall the Galerkin orthogonality
	\begin{equation}\label{eq:galerkinorth}
		\int_{\Omega} A \nabla (u_{\fine} - u_\coarse) \cdot \nabla v_\coarse \d{x}
		=
		0
		\qquad \text{for all }
		v_\coarse \in \mathcal{S}^p_0(\TT_\coarse).
	\end{equation}
	Therefore, we can insert $\transfer{\fine}{\coarse} (u_{\fine} - u_\coarse) \in \mathcal{S}^p_0(\TT_\coarse)$ into the bilinear form $a(\cdot,\cdot)$.
	With $v_\fine := (1-\transfer{\fine}{\coarse})(u_{\fine} - u_\coarse) \in \mathcal{S}^p_0(\TT_\fine)$, this yields that
	\begin{equation*}
		\enorm{u_{\fine} - u_\coarse}^2
		\refstack[]{\eqref{eq:galerkinorth}}{=}
		a(u_{\fine} - u_\coarse,(1 - \transfer{\fine}{\coarse}) (u_{\fine} - u_\coarse))\\
		=
		a(u_{\fine},v_\fine) - \int_{\Omega} A \nabla u_\coarse \cdot \nabla v_\fine \d{x}.
	\end{equation*}
	Using $\TT_\coarse$-elementwise integration by parts, we see that
	\begin{equation*}
	\label{eq:dlressential}
		\enorm{u_{\fine} - u_\coarse}^2
		=
		\!\!\! \sum_{T \in \TT_\coarse \setminus \TT_\fine} \int_T \big(f + \div \f + \div(A \nabla u_\coarse)\big) v_\fine \d{x}
		+ 
		\!\!\! \sum_{E \in \EE_\coarse^\Omega \setminus \EE_\fine^\Omega} \int_{E} \jump{ (A \nabla u_\coarse + \f) \cdot \normal} v_\fine \d{s}.
	\end{equation*}
	Standard estimates conclude \eqref{eq:discreteReliability}.
\end{proof}

\subsubsection{\bfseries Discrete efficiency}

The following proposition is proved along the lines of \cite[Proposition~2]{flop10} and adapts Verf\"urth's bubble function technique with cleverly chosen bubble functions.
We note that the idea goes back to the seminal works \cite{doerfler1996,mns2000}.
This result extends \cite[Lemma~4.3]{dks16} to polynomial degrees $p \geq 1$.

\begin{proposition}\label{prop:dle}
	Let $\TT_\coarse \in \T$ and $\TT_\fine \in \refine{\TT_\coarse}{}$.
	Let $p \in \N$.
	For $T \in \TT_\coarse \setminus \TT_\fine$ and $E \in \EE_\coarse \setminus \EE_\fine$ there hold the estimates
	\begin{align}
	\label{eq:prop:dle1}
		|E| \, \norm{\jump{(A \nabla u_\coarse + \f)\cdot\normal}}{L^2(E)}^2
		& \lesssim
		\norm{A^{1/2} \nabla (u_\coarse - u_{\fine})}{L^2(\TT_\coarse^{\rm red}(E))}^2
		+ \data_\coarse(\TT_\coarse^{\rm red}(E))^2 \\
	\nonumber
		&  \quad \qquad + \sum_{T \in \TT_\coarse^{\rm red}(E)} |T| \, \norm{f + \div (A \nabla u_\coarse + \f)}{L^2(T)}^2, \\
	\label{eq:prop:dle2}
		|T| \, \norm{f + \div (A \nabla u_\coarse + \f)}{L^2(T)}^2
		&\lesssim
		\norm{A^{1/2} \nabla (u_{\coarse} - u_{\fine})}{L^2(T)}^2 + \data_\coarse(T)^2.
		\hspace{-240pt}&
	\end{align}
	Together, there exists a constant $\Cdeff > 0$ such that there holds \emph{discrete local efficiency}
	\begin{equation}\label{eq:prop:dle9}
		\est^2_\coarse(\EE_\coarse \setminus \EE_\fine)
		\leq
		\Cdeff \big[ \, \enorm{u_\coarse - u_{\fine}}^2 + \data_\coarse(\TT_\coarse \setminus \TT_\fine)^2 \, \big].
	\end{equation}
	The constant $\Cdeff$ depends only on $\Cshapereg$, $A$, and $p$.
\end{proposition}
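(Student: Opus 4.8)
The plan is to follow Verfürth's bubble-function technique, with the crucial twist—already used in \cite{flop10,dks16}—that the bubble functions are built on the \emph{refined} mesh $\TT_\fine$, so that the Galerkin orthogonality $a(u_\fine,v_\fine)=F(v_\fine)$ for $v_\fine\in\SS^p_0(\TT_\fine)$ may be exploited to produce $u_\coarse-u_\fine$ rather than $u_\coarse-u$. I would first establish the volume estimate~\eqref{eq:prop:dle2}, then the edge estimate~\eqref{eq:prop:dle1}, and finally sum up to obtain~\eqref{eq:prop:dle9}. Throughout, all hidden constants depend only on $\Cshapereg$, $A$, and $p$, because newest vertex bisection of an admissible initial mesh produces only finitely many local refinement patterns, so the required norm equivalences on finite-dimensional polynomial spaces are uniform.

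\emph{Volume residual.} For $T\in\TT_\coarse\setminus\TT_\fine$ abbreviate $R_T:=(f+\div(A\nabla u_\coarse+\f))|_T$ and split $R_T=Q_T+(1-\Pi_T)(f+\div\f)$ with $Q_T:=\Pi_T(f+\div\f)+\div(A\nabla u_\coarse)|_T\in\PP^{p-2}(T)$ (where $\PP^{-1}(T):=\{0\}$). For $p=1$ the piecewise-constant $A$ forces $\div(A\nabla u_\coarse)|_T=0$ and $\Pi_T=0$, hence $Q_T=0$ and $|T|\,\norm{R_T}{L^2(T)}^2=|T|\,\norm{(1-\Pi_T)(f+\div\f)}{L^2(T)}^2\le\data_\coarse(T)^2$, which is already~\eqref{eq:prop:dle2}. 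For $p\ge2$, I use that $T\notin\TT_\fine$ is bisected, so one NVB bisection splits $T$ along an edge $\sigma$ interior to $T$; let $b_T$ be the associated $\TT_\fine$-piecewise-quadratic edge bubble, so that $0\le b_T\lesssim1$, $\operatorname{supp}b_T\subseteq T$, $b_T|_{\partial T}=0$, $b_T\in\SS^p(\TT_\fine)$, and $\int_T b_T w^2\simeq\norm{w}{L^2(T)}^2$, $\norm{b_Tw}{L^2(T)}\lesssim\norm{w}{L^2(T)}$, $\seminorm{b_Tw}{H^1(T)}\lesssim|T|^{-1/2}\norm{w}{L^2(T)}$ for all $w\in\PP^{p-2}(T)$. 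Testing with $v_\fine:=b_TQ_T\in\SS^p_0(\TT_\fine)$, integrating by parts over $T$ (no boundary term since $v_\fine|_{\partial T}=0$), and inserting $F(v_\fine)=a(u_\fine,v_\fine)$ gives $\int_T R_T v_\fine=\int_T A\nabla(u_\fine-u_\coarse)\cdot\nabla v_\fine$. Hence $\norm{Q_T}{L^2(T)}^2\lesssim\int_T b_TQ_T^2=\int_T R_Tv_\fine-\int_T(1-\Pi_T)(f+\div\f)\,v_\fine$; by Cauchy--Schwarz, the scaling of $b_T$, and $|T|^{1/2}\simeq\operatorname{diam}(T)$, one absorbs $\norm{Q_T}{L^2(T)}$ and obtains $|T|\,\norm{Q_T}{L^2(T)}^2\lesssim\norm{A^{1/2}\nabla(u_\coarse-u_\fine)}{L^2(T)}^2+\data_\coarse(T)^2$, which with $|T|\,\norm{(1-\Pi_T)(f+\div\f)}{L^2(T)}^2\le\data_\coarse(T)^2$ yields~\eqref{eq:prop:dle2}.

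\emph{Edge residual.} For an interior edge $E=T\cap T'\in\EE_\coarse\setminus\EE_\fine$ (for boundary edges the jump is absent and only~\eqref{eq:prop:dle2} is needed) set $J_E:=\jump{(A\nabla u_\coarse+\f)\cdot\normal}|_E$ and split $J_E=P_E+(1-\Pi_E)\jump{\f\cdot\normal}$ with $P_E:=\jump{A\nabla u_\coarse\cdot\normal}+\Pi_E\jump{\f\cdot\normal}\in\PP^{p-1}(E)$. Since $E\notin\EE_\fine$, its midpoint $m_E$ is a vertex of $\TT_\fine$, and the $\TT_\fine$-hat function $b_E$ at $m_E$ satisfies $b_E\in\SS^1_0(\TT_\fine)\subseteq\SS^p(\TT_\fine)$, $\operatorname{supp}b_E\subseteq\area{\TT_\coarse^{\rm red}(E)}=T\cup T'$, $b_E|_{\partial(T\cup T')}=0$, and $0\le b_E\lesssim1$. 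With a degree-$(p-1)$ polynomial extension $\widetilde P_E$ of $P_E$, the test function $v_\fine:=b_E\widetilde P_E\in\SS^p_0(\TT_\fine)$ is supported in $T\cup T'$ and vanishes on $\partial(T\cup T')$. Elementwise integration by parts over $T$ and $T'$, plus $F(v_\fine)=a(u_\fine,v_\fine)$, give
\[
 \int_E J_E\,v_\fine=\sum_{T''\in\TT_\coarse^{\rm red}(E)}\Big[\int_{T''}A\nabla(u_\coarse-u_\fine)\cdot\nabla v_\fine\;+\;\int_{T''}R_{T''}\,v_\fine\Big].
\]
Combining $\norm{P_E}{L^2(E)}^2\lesssim\int_E b_EP_E^2=\int_E J_E v_\fine-\int_E(1-\Pi_E)\jump{\f\cdot\normal}\,v_\fine$ with the standard trace/scaling bounds $\norm{v_\fine}{L^2(E)}\lesssim\norm{P_E}{L^2(E)}$, $\norm{v_\fine}{L^2(T'')}\lesssim|E|^{1/2}\norm{P_E}{L^2(E)}$, $\seminorm{v_\fine}{H^1(T'')}\lesssim|E|^{-1/2}\norm{P_E}{L^2(E)}$, absorbing $\norm{P_E}{L^2(E)}$, and using $|E|^2\simeq|T''|$, one arrives at~\eqref{eq:prop:dle1}: the residual $R_{T''}$ reproduces the volume term on the right-hand side, while the $(1-\Pi_E)\jump{\f\cdot\normal}$ contribution is bounded by $\data_\coarse(\TT_\coarse^{\rm red}(E))^2$.

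\emph{Summation and main obstacle.} Summing $\eta_\coarse(E)^2=|E|\,\norm{\jump{(A\nabla u_\coarse+\f)\cdot\normal}}{L^2(E)}^2+\sum_{T\in\TT_\coarse^{\rm red}(E)}|T|\,\norm{R_T}{L^2(T)}^2$ over $E\in\EE_\coarse\setminus\EE_\fine$, inserting~\eqref{eq:prop:dle1}--\eqref{eq:prop:dle2}, and using that any $T$ carrying a refined edge lies in $\TT_\coarse\setminus\TT_\fine$ and belongs to at most three patches $\TT_\coarse^{\rm red}(E)$, yields $\est_\coarse(\EE_\coarse\setminus\EE_\fine)^2\lesssim\enorm{u_\coarse-u_\fine}^2+\data_\coarse(\TT_\coarse\setminus\TT_\fine)^2$, i.e.~\eqref{eq:prop:dle9}. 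I expect the only genuinely delicate point to be the construction of the bubble functions: they must lie in $\SS^p(\TT_\fine)$ (not merely in $\SS^p(\TT_\coarse)$), have the prescribed support, vanish on the relevant boundary and on $\Gamma$, and satisfy the norm equivalences uniformly over the finitely many NVB patterns—in particular, for $p\ge2$ replacing the classical cubic element bubble by the quadratic edge bubble of the once-bisected triangle, whereas the case $p=1$ is special in that the volume residual is absorbed directly into $\data_\coarse$ because $\div(A\nabla u_\coarse)$ vanishes elementwise. The remaining steps are the routine Verfürth absorption estimates.
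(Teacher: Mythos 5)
Your proof is correct and follows essentially the same route as the paper's: Verf\"urth's bubble technique with discrete conforming bubbles on $\TT_\fine$ (the fine hat at the edge midpoint for the jump term, and for the volume term a bubble that coincides on $T$ with the paper's $\phi_{\coarse,z}\phi_{\fine,z'}$), combined with the observation that for $p=1$ the volume residual is absorbed directly into $\data_\coarse(T)$. Your explicit splitting of the residuals into a polynomial part and the $(1-\Pi_T)$-, $(1-\Pi_E)$-remainders is exactly the standard way to realize the paper's sketched argument, and the summation with finite-overlap of the patches is handled correctly.
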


\begin{proof}[Sketch of proof]
	We use ideas from \cite[Lemma~11]{hh2} and employ Verf\"urth's bubble function technique with discrete, conforming bubbles.
	To this end, let $\TT_\coarse \in \T$ and $\TT_\fine \in {\rm refine}(\TT_\coarse)$.
	For $z \in \VV_\coarse$, let $\phi_{\coarse,z} \in \SS^1(\TT_\coarse)$ be the piece-wise affine hat function.
	An element $T \in \TT_\coarse \setminus \TT_\fine$ has at least one edge $E \subset \partial T$ with $E \in \EE_\coarse \setminus \EE_\fine$.
	We denote the midpoint of this edge by $z' \in \VV_\fine$ and the vertex opposite to $E$ by $z \in \VV_\coarse$.
	We then define the corresponding edge and element bubble functions as
	\begin{equation}\label{eq:bubblefunc}
		b_{E} := \phi_{\fine, z'} \in \mathcal{S}^1(\TT_\fine)
		~\text{ and }~
		b_{T} := \phi_{\coarse,z} \phi_{\fine, z'} \in \mathcal{S}^2_0(\TT_\fine),
	\end{equation}
	respectively.
	The estimates \eqref{eq:prop:dle1} for $p \geq 1$ and \eqref{eq:prop:dle2} for $p \geq 2$ follow directly from the bubble function technique with $b_E$ and $b_T$, respectively.
	For \eqref{eq:prop:dle2} with $p=1$, it holds that
	\begin{equation*}
		|T| \, \norm{f + \div (A \nabla u_\coarse + \f)}{L^2(T)}^2
		=
		|T| \, \norm{f + \div \f}{L^2(T)}^2
		\leq
		\data_\coarse(T)^2.
	\end{equation*}
	Combining~\eqref{eq:prop:dle1}--\eqref{eq:prop:dle2}, we conclude~\eqref{eq:prop:dle9}.
\end{proof}

\subsection{Proof of Theorem~\ref{theorem:instanceoptimality}}\label{sec:proofafem}
In the last sections, we have verified \ref{enum:A:mark}--\ref{enum:A:equiv} for the primal problem~\eqref{eq:intro:primal}.
From Proposition~\ref{theorem:abstractio}, we thus infer instance optimality (Theorem~\ref{theorem:instanceoptimality}) of Algorithm~\ref{alg:abstractafem}.
Clearly, the same results hold for the dual problem~\eqref{eq:intro:dual}, which differs from~\eqref{eq:intro:primal} only through the right-hand side $G$ instead of $F$.

\section{Proof of Theorem~\ref{theorem:goafem}}
\label{sec:proofgoafem}

The key observation for the proof of Theorem~\ref{theorem:goafem} is that the proposed GOAFEM (Algorithm~\ref{alg:abstractgoafem}) is simultaneously instance optimal for both, the primal and the dual error estimate. Since the properties~\ref{enum:A:mon}--\ref{enum:A:equiv} are already verified for primal and dual problem (see Section~\ref{sec:auxiliary}), it only remains to show that the marking strategy of GOAFEM (Algorithm~\ref{alg:goafemmarking}) satisfies~\ref{enum:A:mark} for both, the primal and the dual error estimator.

\begin{lemma}\label{lemma:goafemmarking}
For $\TT_\coarse \in \T$, let $\MM_\coarse \subseteq \EE_\coarse$ be set of marked edges generated by  Algorithm~\ref{alg:goafemmarking}. Then, it holds that $\MM_\coarse \neq 0$ as well as
\begin{align}
 \label{eq:goafemmmarking:primal}
 \est_\coarse (\tail{\MM_\coarse}{\coarse})
 &\ge C \, \#\MM_\coarse \max_{E \in \EE_\coarse} \est_\coarse(\tail{E}{\coarse}), 
 \\
 \label{eq:goafemmmarking:dual}
 \est_\coarse^\ast (\tail{\MM_\coarse}{\coarse})
 &\ge C \, \#\MM_\coarse \max_{E \in \EE_\coarse} \est_\coarse^\ast(\tail{E}{\coarse}),
\end{align}
where $C > 0$ depends only on $\Cmin > 0$.
\end{lemma}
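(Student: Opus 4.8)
The plan is to read off the conclusion directly from how Algorithm~\ref{alg:goafemmarking} constructs $\MM_\coarse$, combined with the marking property~\ref{enum:A:mark} (equivalently Proposition~\ref{prop:marking}) applied separately to the primal and dual estimators. First I would fix notation: let $\overline\MM_\coarse := {\rm markAFEM}(\EE_\coarse,\eta_\coarse,\vartheta)$ and $\overline\MM^\ast_\coarse := {\rm markAFEM}(\EE_\coarse,\eta^\ast_\coarse,\vartheta)$, so that by Proposition~\ref{prop:marking} every nonempty subset of $\overline\MM_\coarse$ satisfies~\ref{enum:A:mark} for $\eta_\coarse$ with $\Cmark = \vartheta$, and every nonempty subset of $\overline\MM^\ast_\coarse$ satisfies~\ref{enum:A:mark} for $\eta^\ast_\coarse$ with $\Cmark = \vartheta$. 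Since by construction $\MM_{\rm min} \in \{\overline\MM_\coarse, \overline\MM^\ast_\coarse\}$ and $\MM_\coarse = \MM_{\rm min} \cup \MM_{\rm max}'$ with $\MM_{\rm min} \subseteq \MM_\coarse$, and since $\MM_{\rm max}'$ is a (possibly empty) subset of the other set $\MM_{\rm max}$, I have $\MM_\coarse \cap \overline\MM_\coarse \supseteq$ one of the two: either $\MM_{\rm min}$ (if $\MM_{\rm min}=\overline\MM_\coarse$) or $\MM_{\rm max}'$; in fact in both cases $\MM_\coarse$ contains all of $\overline\MM_\coarse$ if $\overline\MM_\coarse = \MM_{\rm min}$, and $\overline\MM_\coarse = \MM_{\rm max} \supseteq \MM_{\rm max}'$ otherwise. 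The point is only that $\MM_\coarse$ has a nonempty intersection with $\overline\MM_\coarse$ (and with $\overline\MM^\ast_\coarse$) that is \emph{comparable in cardinality} to $\#\MM_\coarse$.

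The key quantitative step is the cardinality bookkeeping. By Step~4 of Algorithm~\ref{alg:goafemmarking}, $\#\MM_{\rm max}' = n \le \Cmin \#\MM_{\rm min}$ when $\Cmin\#\MM_{\rm min}\ge 1$, and $n=\min\{\#\MM_{\rm max},\max\{1,\lfloor\Cmin\#\MM_{\rm min}\rfloor\}\}$ in general, so that $\#\MM_\coarse = \#\MM_{\rm min} + n \le (1+\Cmin)\#\MM_{\rm min}$ whenever $\Cmin\#\MM_{\rm min}\ge 1$, while if $\Cmin\#\MM_{\rm min}<1$ then $n=\min\{\#\MM_{\rm max},1\}\le 1$ and again $\#\MM_\coarse \le \#\MM_{\rm min}+1 \le 2\#\MM_{\rm min}$ (note $\#\MM_{\rm min}\ge 1$ since $\overline\MM_\coarse,\overline\MM^\ast_\coarse\neq\emptyset$ by~\ref{enum:A:mark}). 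Hence $\#\MM_{\rm min} \ge c_1 \#\MM_\coarse$ with $c_1 := \min\{1/(1+\Cmin), 1/2\}$. I would then split into the two symmetric cases. If $\overline\MM_\coarse = \MM_{\rm min}$: then $\overline\MM_\coarse \subseteq \MM_\coarse$, so $\tail{\overline\MM_\coarse}{\coarse} \subseteq \tail{\MM_\coarse}{\coarse}$, monotonicity of $\eta_\coarse(\cdot)$ over subsets gives $\eta_\coarse(\tail{\MM_\coarse}{\coarse})^2 \ge \eta_\coarse(\tail{\overline\MM_\coarse}{\coarse})^2 \ge \vartheta\,(\#\overline\MM_\coarse)\max_E\eta_\coarse(\tail{E}{\coarse})^2 \ge \vartheta c_1 (\#\MM_\coarse)\max_E\eta_\coarse(\tail{E}{\coarse})^2$, which is~\eqref{eq:goafemmmarking:primal}; and $\overline\MM^\ast_\coarse = \MM_{\rm max}$, but $\MM_{\rm max}' \subseteq \MM_\coarse$ is a nonempty-or-empty subset of $\overline\MM^\ast_\coarse$ — here I must be a little careful, since if $n=0$ the set $\MM_{\rm max}'$ is empty. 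However $n=0$ forces $\#\MM_{\rm max}=0$, i.e.\ $\overline\MM^\ast_\coarse=\emptyset$, contradicting $\overline\MM^\ast_\coarse\neq\emptyset$; so $n\ge 1$ always and $\MM_{\rm max}'\neq\emptyset$. Then by Proposition~\ref{prop:marking} applied to the nonempty subset $\MM_{\rm max}'\subseteq\overline\MM^\ast_\coarse$, $\eta^\ast_\coarse(\tail{\MM_\coarse}{\coarse})^2 \ge \eta^\ast_\coarse(\tail{\MM_{\rm max}'}{\coarse})^2 \ge \vartheta\,(\#\MM_{\rm max}')\max_E\eta^\ast_\coarse(\tail{E}{\coarse})^2 = \vartheta n \max_E\eta^\ast_\coarse(\tail{E}{\coarse})^2$, and since $n \ge c_1\#\MM_\coarse$ (from the same bookkeeping: $\#\MM_\coarse = \#\MM_{\rm min}+n \le (1+\tfrac1\Cmin)n$ when $n\ge 1$, using $\#\MM_{\rm min}\le n/\Cmin$... wait, that direction needs $n\approx\Cmin\#\MM_{\rm min}$), I get~\eqref{eq:goafemmmarking:dual}.

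The main obstacle — and where I would spend the most care — is precisely getting $n$ comparable to $\#\MM_\coarse$ from below, because the flooring and the cap $n=\min\{\#\MM_{\rm max},\ldots\}$ can make $n$ much smaller than $\Cmin\#\MM_{\rm min}$ when $\#\MM_{\rm max}$ is small. The resolution is that when $n=\#\MM_{\rm max}$ (the cap is active) then $\MM_{\rm max}'=\MM_{\rm max}=\overline\MM^\ast_\coarse$ entirely, so the full marking property for the dual holds on $\overline\MM^\ast_\coarse$ with its true cardinality $\#\overline\MM^\ast_\coarse = \#\MM_{\rm max} = n$; and when $n = \max\{1,\lfloor\Cmin\#\MM_{\rm min}\rfloor\}$ (the floor is active), then $n \ge \max\{1, \Cmin\#\MM_{\rm min}/2\} \ge c_2\#\MM_{\rm min}$ for $c_2:=\min\{1,\Cmin/2\}$ (using $\lfloor x\rfloor \ge x/2$ for $x\ge 1$ together with $n\ge 1$), so $\#\MM_\coarse = \#\MM_{\rm min}+n \le (1/c_2 + 1)n$. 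In either sub-case $n \ge c_3 \#\MM_\coarse$ for an explicit $c_3 = c_3(\Cmin)$, and likewise $\#\MM_{\rm min}\ge c_1\#\MM_\coarse$; taking $C := \vartheta \min\{c_1,c_3\}$ (with the obvious extra factor from the shape-regularity-independence already absorbed, as in Proposition~\ref{prop:marking}) and restoring the squares appropriately to match the statement of~\eqref{eq:goafemmmarking:primal}--\eqref{eq:goafemmmarking:dual} completes the proof. I would present the cardinality lemma ($\#\MM_{\rm min}\gtrsim\#\MM_\coarse$ and $n\gtrsim\#\MM_\coarse$) as a short standalone paragraph, then the two symmetric estimates as a two-line argument each, emphasizing that the symmetry of the roles of $\eta_\coarse$ and $\eta^\ast_\coarse$ means only one case needs to be written out in full.
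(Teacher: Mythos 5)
Your proposal is correct and follows essentially the same route as the paper's proof: apply Proposition~\ref{prop:marking} to the nonempty subsets $\MM_{\rm min}\subseteq\overline\MM_\coarse$ and $\MM_{\rm max}'\subseteq\overline\MM_\coarse^\ast$, then establish $\#\MM_\coarse\simeq\#\MM_{\rm min}\simeq\#\MM_{\rm max}'$ by the same two-case analysis of the definition of $n$ (cap active vs.\ floor active), differing only in the elementary bound used for the Gauss bracket ($\lfloor x\rfloor\ge x/2$ versus the paper's $\lfloor x\rfloor\ge x-1$ combined with $\#\MM_{\rm max}'\ge1$). Your explicit tracking of the $\vartheta$-dependence in the final constant is, if anything, slightly more careful than the paper's statement.
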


\begin{proof}
According to~\cite[Proposition 5.1]{dks16}, Algorithm~\ref{alg:marking} guarantees that
\begin{align*}
 \overline{\MM}_\coarse &\neq \emptyset
 & \text{with } \qquad
 \est_\coarse (\tail{\overline{\MM}_\coarse}{\coarse})
 &\ge \Cmark \, \#\overline{\MM}_\coarse \max_{E \in \EE_\coarse} \est_\coarse(\tail{E}{\coarse}),
 \\
 \overline{\MM}_\coarse^\ast &\neq \emptyset
 & \text{with } \qquad
 \est_\coarse^\ast (\tail{\overline{\MM}_\coarse^\ast}{\coarse})
 &\ge \Cmark \, \#\overline{\MM}_\coarse^\ast \max_{E \in \EE_\coarse} \est_\coarse^\ast(\tail{E}{\coarse}).
\end{align*}
Without loss of generality, we may assume that $\MM_{\rm min} = \overline{\MM}_\coarse$ and $\MM_{\rm max} = \overline{\MM}_\coarse^\ast$. Recall that $\#\MM_{\rm min} \le \#\MM_{\rm max}$ and $\MM_\coarse := \MM_{\rm min} \cup \MM_{\rm max}' \neq \emptyset$, where 
\begin{align*}
 \MM_{\rm max}' \subseteq \MM_{\rm max}
 \quad \text{with} \quad
 \#\MM_{\rm max}' = n := \min \big\{ \#\MM_{\rm max}, \max \{ 1, \lfloor \Cmin \#\MM_{\rm min} \rfloor \} \big\}.
\end{align*}
First, note that $\MM_{\rm max} \supseteq \MM_{\rm max}' \neq \emptyset$.
With Proposition~\ref{prop:marking}, it follows that
\begin{align*}
 \est_\coarse (\tail{\MM_\coarse}{\coarse})
 \ge \est_\coarse (\tail{\MM_{\rm min}}{\coarse})
 &\ge \Cmark \, \#\MM_{\rm min} \max_{E \in \EE_\coarse} \est_\coarse(\tail{E}{\coarse}),
 \\
 \est_\coarse^\ast (\tail{\MM_\coarse}{\coarse})
 \ge \est_\coarse^\ast (\tail{\MM_{\rm max}'}{\coarse})
 &\ge \Cmark \, \#\MM_{\rm max}' \max_{E \in \EE_\coarse} \est_\coarse^\ast(\tail{E}{\coarse}),
\end{align*}
In view of~\eqref{eq:goafemmmarking:primal}--\eqref{eq:goafemmmarking:dual}, it only remains to prove that $\#\MM_\coarse \lesssim \#\MM_{\rm min} \lesssim \#\MM_{\rm max}'$, where the hidden constants depend only on $\Cmin$. First, note that
\begin{align*}
 \#\MM_\coarse
 &\le \#\MM_\textrm{min} + \#\MM_\textrm{max}' \\
 &\le \#\MM_\textrm{min} + \max \{ 1, \lfloor \Cmin \#\MM_{\rm min} \rfloor \}
 \le \max \{ 2, (1 + \Cmin) \} \, \#\MM_{\rm min}.
\end{align*} 
This already guarantees that
\begin{align*}
 \frac{1}{\max \{ 2, (1 + \Cmin) \}} \, \#\MM_\coarse \le \#\MM_{\rm min} 
 \le \#\MM_\coarse.
\end{align*}
To estimate $\#\MM_{\rm max}'$, we consider two cases:

{\bf Case 1.} If $n = \#\MM_{\rm max} \le \max \{ 1, \lfloor \Cmin \#\MM_{\rm min} \rfloor \}$, then $\MM_{\rm max} = \MM_{\rm max}'$. Therefore,
\begin{align*}
 \frac{1}{\max \{ 2, (1 + \Cmin) \}} \, \#\MM_\coarse \le \#\MM_{\rm min} 
 \le \#\MM_{\rm max} \le \#\MM_\coarse.
\end{align*}

{\bf Case 2.} If $n = \max \{ 1, \lfloor \Cmin \#\MM_{\rm min} \rfloor \} < \#\MM_{\rm max}$, then $\#\MM_{\rm max}^\prime \geq 1$ leads to
\begin{align*}
		\#\MM_{\rm max}^\prime
		\ge
		\lfloor \Cmin \#\MM_{\rm min} \rfloor
		\geq
		\Cmin \#\MM_{\rm min} - 1
		\geq
		\Cmin \#\MM_{\rm min} - \#\MM_\textrm{max}^\prime.
\end{align*}
Hence, we see that
\begin{align*}
 \frac{1}{\max \{ 2, (1 + \Cmin) \}} \, \#\MM_\coarse 
 \le \#\MM_{\rm min} 
 \le \frac{2}{\Cmin} \, \#\MM_{\rm max}'
 \le \frac{2}{\Cmin} \, \MM_\coarse. 
\end{align*}
In any case, we see that $\#\MM_\coarse \simeq \#\MM_{\rm min} \simeq \#\MM_{\max}'$. This concludes the proof.
\end{proof}

\begin{proof}[\bfseries Proof of Theorem~\ref{theorem:goafem}]
Section~\ref{sec:auxiliary} shows that \ref{enum:A:mon}--\ref{enum:A:equiv} are satisfied for the primal and the dual problem.
Lemma~\ref{lemma:goafemmarking} shows that $\MM_\coarse$ from Algorithm~\ref{alg:goafemmarking} satisfies \ref{enum:A:mark} simultaneously for both estimators.
Proposition~\ref{theorem:abstractio} thus implies instance optimality of Algorithm~\ref{alg:abstractgoafem} for the primal and dual energy. In explicit terms, there exists $C, C^\ast > 0$ such that for all $\ell \in \N_0$
\begin{align*}
 \forall \, \TT_\coarse \in {\rm refine}(\TT_0) : \quad
 &\big[ \, C \, \#(\TT_\coarse \backslash \TT_0) \le \#(\TT_\ell \backslash \TT_0) \, \Longrightarrow \,
 \energy(\TT_\ell)
 \le \energy(\TT_\coarse) \, \big],
 \\
 \forall \, \TT_{\coarse^\ast} \in {\rm refine}(\TT_0) : \quad
 &\big[ \, C^\ast \, \#(\TT_{\coarse^\ast} \backslash \TT_0) \le \#(\TT_\ell \backslash \TT_0) \, \Longrightarrow \,
 \energy^\ast(\TT_\ell)
 \le \energy^\ast(\TT_{\coarse^\ast}) \, \big],
\end{align*}
where $\energy(\cdot)$ denotes the energy~\eqref{eq:def:energy} for the primal problem, and $\energy^\ast(\cdot)$ denotes the energy for the dual problem. Obviously, this leads to
\begin{align*}
 \forall \, \TT_\coarse, \TT_{\coarse^\ast} \in {\rm refine}(\TT_0) : \quad
 \big[ \, \max\{C, C^\ast\} \, \max\{ &\#(\TT_\coarse \backslash \TT_0) \,,\, \#(\TT_{\coarse^\ast} \backslash \TT_0)\} \le \#(\TT_\ell \backslash \TT_0) \, 
 \\&\quad
 \Longrightarrow \,
 \energy(\TT_\ell) \, \energy^\ast(\TT_\ell)
 \le \energy(\TT_\coarse) \, \energy^\ast(\TT_{\coarse^\ast}) \, \big],
\end{align*}
Using the equivalence~\ref{enum:A:equiv} of energy and total error (for primal and dual problem), we conclude the proof.
\end{proof}

\section{Numerical experiments}
\label{sec:numerics}

We conclude this work with some numerical experiments performed in MATLAB, where our implementation builds on the codes provided in~\cite{p1afem} for $p=1$ and~\cite{p2afem} for $p=2$.
For the modified maximum criterion (Algorithm~\ref{alg:marking}), we have implemented a recursive variant proposed in~\cite[Algorithm~7.2]{dks16}.

\subsection{Adaptive FEM with Z-shaped domain}
\label{subsec:afem-numerics}

We consider the problem
\begin{subequations}
	\label{eq:numerics:afem-problem}
	\begin{align}
		\hspace*{20mm} -\Delta u &= 1 && \textrm{ in } \Omega :=(-1,1)^2 \setminus {\rm conv}\{(0,0), (-1,0), (-1,-1) \}, \hspace*{10mm} \\
		u &= 0  && \textrm{ on } \Gamma \coloneqq \partial \Omega, \hspace*{10mm}
	\end{align}
\end{subequations}
where ${\rm conv}(\cdot)$ denotes the convex hull and $\Omega$ is the Z-shaped domain from Figure~\ref{fig:mesh-compare}.
This problem is solved with the instance-optimal algorithm from~\cite{dks16}, i.e., Algorithm~\ref{alg:abstractafem}. Moreover, we compare the results with a rate-optimal algorithm, which builds on an edge-based D\"orfler marking criterion \cite{doerfler1996}:
Find a subset $\MM_\coarse \subseteq \EE_\coarse$ with minimal cardinality such that
\begin{equation}
\label{eq:doerfler}
	\theta \, \mu_\coarse(\EE_\coarse)^2 \leq \mu_\coarse(\MM_\coarse)^2
\end{equation}
for an edge-based error estimator $\mu_\coarse: \EE_\coarse \to \R$.
Note that uniform refinement corresponds to $\theta = 1$ in~\eqref{eq:doerfler} but $\vartheta = 0$ in Algorithm~\ref{alg:marking}.
Therefore, we set $\theta = 1 - \vartheta$ in the following to account for the different interpretations of the marking parameters. We note that both adaptive strategies only differ by the marking criterion. Throughout, we consider $\vartheta = 0.5$ and $\Cmin = 1$.

In Figure~\ref{fig:error-estimator-zshape},  we visualize the edge-based residual error estimator $\eta_\ell$ and the energy error $\enorm{u-u_\ell}$.
Experimentally, both strategies lead to optimal convergence of error and error estimator with rate $(\#\TT_\ell)^{-p/2}$.
We stress that this is mathematically guaranteed for the maximum strategy, while available results for the D\"orfler marking criterion require that $\theta$ is sufficiently small.
Since the exact solution $u$ is unknown, we extrapolate $\enorm{u}$ from the computed values $\enorm{u_\ell}$ and use the Galerkin orthogonality~\eqref{eq:galerkinorth} to obtain
\begin{equation*}
	\enorm{u-u_\ell}^2
	=
	\enorm{u}^2 - \enorm{u_\ell}^2.
\end{equation*}%
Since~\cite{dks16} does not provide any numerical experiments, we also give qualitative plots of the resulting meshes in Figure~\ref{fig:mesh-compare}.
Both strategies mark edges near the re-entrant corner.
For the D\"orfler marking criterion~\eqref{eq:doerfler}, edges in the interior are refined mostly by mesh closure.
Instead, the modified maximum criterion (Algorithm~\ref{alg:marking}) also marks edges in the interior that have long tails.

\begin{figure}
	\centering
	\hfill
	\begin{subfigure}[t]{0.45\textwidth}
		\centering
		\includegraphics[width=\linewidth]{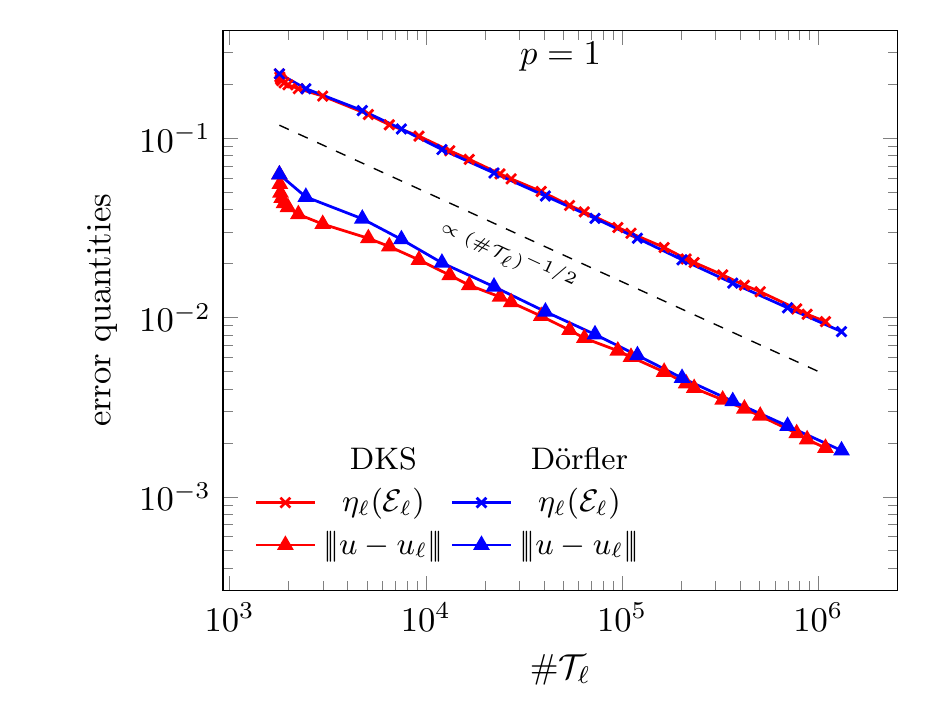}
	\end{subfigure}
	\hfill
	\begin{subfigure}[t]{0.45\textwidth}
		\centering
		\includegraphics[width=\linewidth]{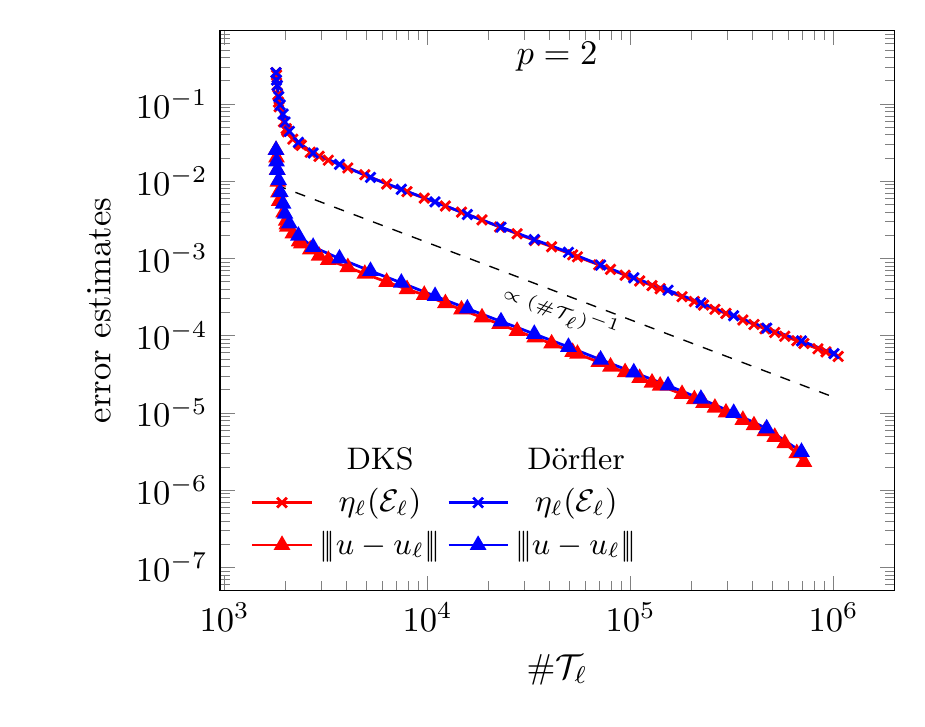}
	\end{subfigure}
	\hfill\hphantom{*}
	\caption{Error estimators and energy error for problem~\eqref{eq:numerics:afem-problem} with FEM of order $p=1$ (left) and $p=2$ (right). We compare the modified maximum criterion (DKS) with the D\"orfler marking~\eqref{eq:doerfler} for $\vartheta = 0.5$.}
	\label{fig:error-estimator-zshape}
\end{figure}

\begin{figure}
	\centering
	\hfill
	\begin{subfigure}[b]{0.49\textwidth}
		\centering
		\includegraphics[width=0.7\linewidth]{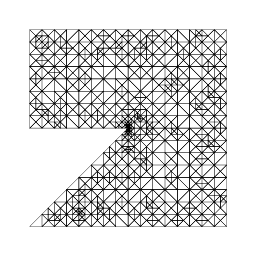}
		\caption{Modified maximum marking, $\#\TT_{10} = 1204$.}
	\end{subfigure}
	\hfill
	\begin{subfigure}[b]{0.49\textwidth}
		\centering
		\includegraphics[width=0.7\linewidth]{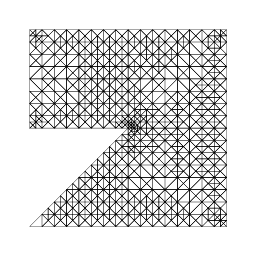}
		\caption{D\"orfler marking~\eqref{eq:doerfler}, $\#\TT_{7} = 1196$.}
	\end{subfigure}
	\hfill\hphantom{*}
	\caption{Comparison of meshes generated by 	
	 Algorithm~\ref{alg:abstractafem} with $p=1$, $\vartheta = 0.5$, and different marking strategies for problem~\eqref{eq:numerics:afem-problem}.}
	\label{fig:mesh-compare}
\end{figure}

\subsection{Goal-oriented AFEM}
\label{subsec:goafem-numerics}
The following numerical example empirically shows how the proposed goal-oriented adaptivity (Algorithm~\ref{alg:abstractgoafem}) handles possible singularities.
We consider a problem proposed in \cite{ms2008}, where the primal problem reads
\begin{subequations}
\label{eq:numerics:problem}
\begin{align}
	\hspace*{20mm} -\Delta u &= \div \f &&\hspace*{-25mm} \textrm{ in } \Omega :=(0,1)^2, \hspace*{10mm} \\
	u &= 0  &&\hspace*{-25mm} \textrm{ on } \Gamma \coloneqq \partial (0,1)^2, \hspace*{10mm}
\end{align}
\end{subequations}
with
\begin{equation*}
	\f(x) = \begin{cases}
	(1,0)^\top & \text{if } x \in T_F := \set{x \in \Omega}{x_1 + x_2 \leq 1/2}, \\
	(0,0)^\top & \text{else}.
	\end{cases}
\end{equation*}
With $T_G := \set{x \in \Omega}{x_1 + x_2 \geq 3/2}$ and
\begin{equation*}
	g = 0
	\qquad \text{and} \qquad
	\g(x) = \begin{cases}
	(1,0)^\top & \text{if } x \in T_G, \\
	(0,0)^\top & \text{else},
	\end{cases}
\end{equation*}%
the goal functional from~\eqref{eq:intro:goal} takes the form
\begin{equation*}
	G(v) = \int_{T_G} \frac{\partial v}{\partial x_1} \d{x}
	\qquad
	\text{for } v \in H_0^1(\Omega).
\end{equation*}
The initial triangulation $\TT_0$ with the subsets $T_F$ and $T_G$, together with approximations to the primal and dual solution can be seen in Figure~\ref{fig:goafem-solution}.
In particular, it is visible that the singularity of $u$ and $u^\ast$ are well separated so that optimal convergence rates can only be achieved if both singularities are appropriately resolved.

\begin{figure}
	\centering
	\hfill
	\begin{subfigure}[b]{0.24\textwidth}
		\centering
		\includegraphics[width=\linewidth]{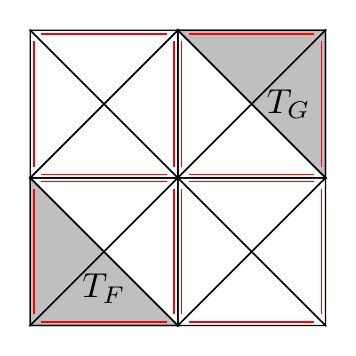}
	\end{subfigure}
	\vspace{-20pt}
	\begin{subfigure}[b]{0.37\textwidth}
		\centering
		\includegraphics[width=\linewidth]{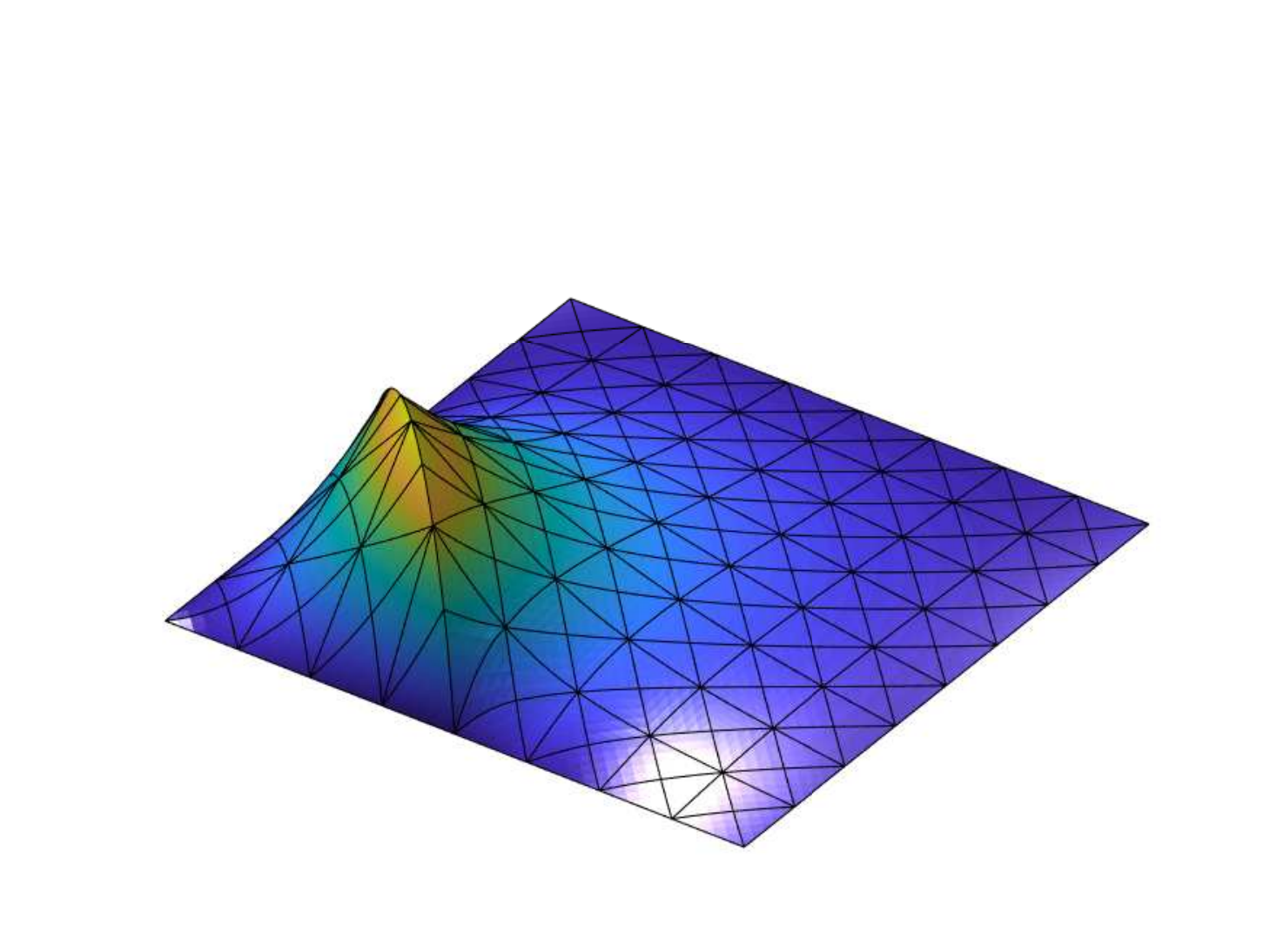}
	\end{subfigure}
	\begin{subfigure}[b]{0.37\textwidth}
		\centering
		\includegraphics[width=\linewidth]{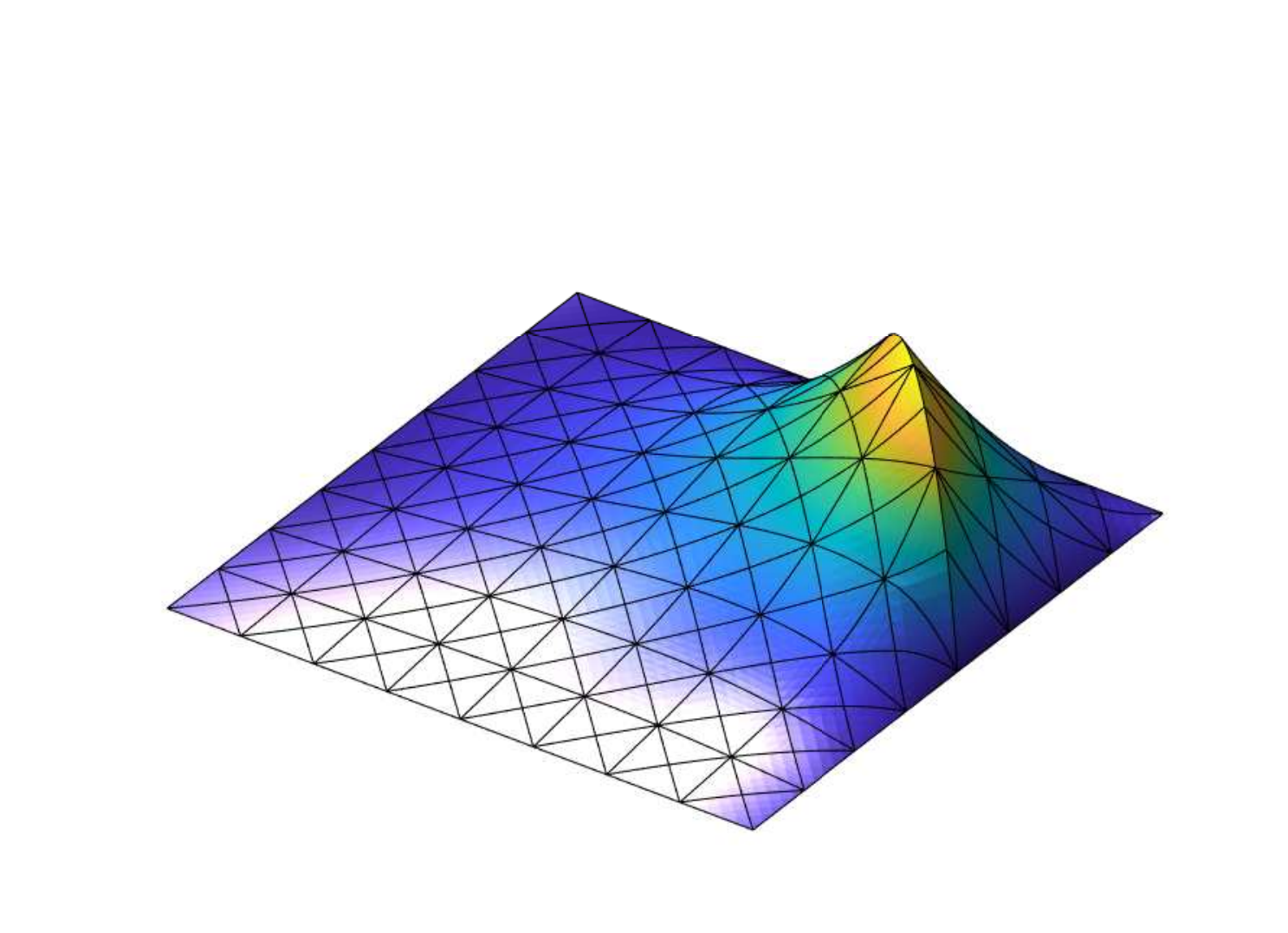}
	\end{subfigure}
	\hfill\hphantom{*}
	\caption{Initial mesh (left), and qualitative plot of primal solution (middle) and negative dual solution (right) of \eqref{eq:numerics:problem}.
	In the initial mesh, the sets $T_F$ and $T_G$ are highlighted in grey and the reference edge of each triangle is highlighted by a red line.}
	\label{fig:goafem-solution}
\end{figure}

In addition to Algorithm~\ref{alg:abstractgoafem}, we investigate the rate-optimal algorithms from \cite{ms2008,goafem,bet2011}.
These build on the D\"orfler marking criterion~\eqref{eq:doerfler}.
For the convenience of the reader, we briefly outline these marking strategies:
\begin{itemize}
	\item In \cite{ms2008}, the D\"orfler criterion~\eqref{eq:doerfler} is employed separately for $\mu_\coarse = \eta_\coarse$ as well as $\mu_\coarse = \eta_\coarse^\ast$ and thus provides sets $\overline{\MM}_\coarse, \overline{\MM}_\coarse^\ast \subseteq \TT_\coarse$.
	Then $\MM_\coarse$ is chosen as the smaller set $\MM_\coarse := \arg\min \{ \overline{\MM}_\coarse, \overline{\MM}_\coarse^\ast \}$.
	
	\item In \cite{goafem}, one proceeds analogously, but chooses $\MM_\coarse := \underline{\MM}_\coarse \cup \underline{\MM}_\coarse^\ast$, where $\underline{\MM}_\coarse \subseteq \overline{\MM}_\coarse$ and $\underline{\MM}_\coarse^\ast \subseteq \overline{\MM}_\coarse^\ast$ satisfy $\#\underline{\MM}_\coarse = \#\underline{\MM}_\coarse^\ast = \min \{ \#\overline{\MM}_\coarse, \#\overline{\MM}_\coarse^\ast \}$.
	
	\item For \cite{bet2011}, the D\"orfler criterion~\eqref{eq:doerfler} is employed for
	\begin{equation*}
		\mu_\coarse(E)^2 := \eta_\coarse(E)^2 \eta^\ast_\coarse(\EE_\coarse)^2 + \eta_\coarse(\EE_\coarse)^2 \eta^\ast_\coarse(E)^2.
	\end{equation*} 
\end{itemize}

In Figure~\ref{fig:error-estimator}, we compare the products of error estimates for the primal and dual problem for the above marking strategies.
For $p=1,2$ and all four strategies, this product decays with optimal rate $(\#\TT_\ell)^{-p}$.
Furthermore, Figure~\ref{fig:mesh-size} gives qualitative plots of the local mesh-size.

\begin{remark}
	Figures~\ref{fig:error-estimator-zshape} and~\ref{fig:error-estimator} underline that our instance optimal algorithms achieve the same (optimal) rate as the rate optimal algorithms, as can be expected from the discussion in the introduction.
	Note that it is difficult to show instance optimality directly in numerical examples, since the computational cost of computing the optimal mesh in~\eqref{eq:intro:instance} grows exponentially in the number of refined edges.
\end{remark}

\begin{figure}
	\centering
	\hfill
	\begin{subfigure}[t]{0.45\textwidth}
		\centering
		\includegraphics[width=\linewidth]{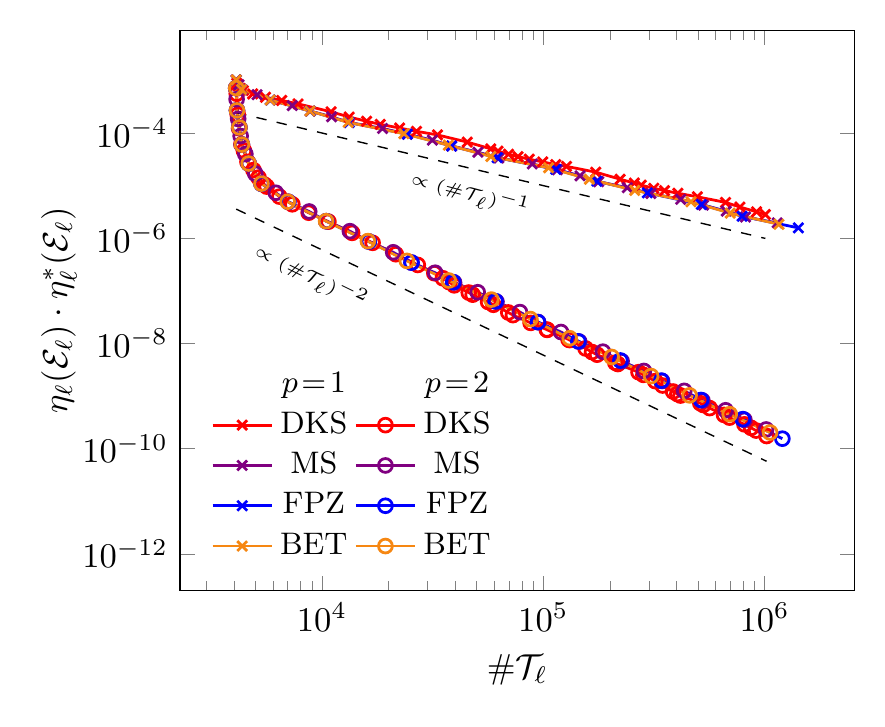}
		\caption{Different marking strategies in Algorithm~\ref{alg:abstractgoafem}, where we compare Algorithm~\ref{alg:goafemmarking} (DKS) and \cite{ms2008,goafem,bet2011} with marking parameter $\vartheta = \theta = 0.5$.}
	\end{subfigure}
	\hfill
	\begin{subfigure}[t]{0.45\textwidth}
		\centering
		\includegraphics[width=\linewidth]{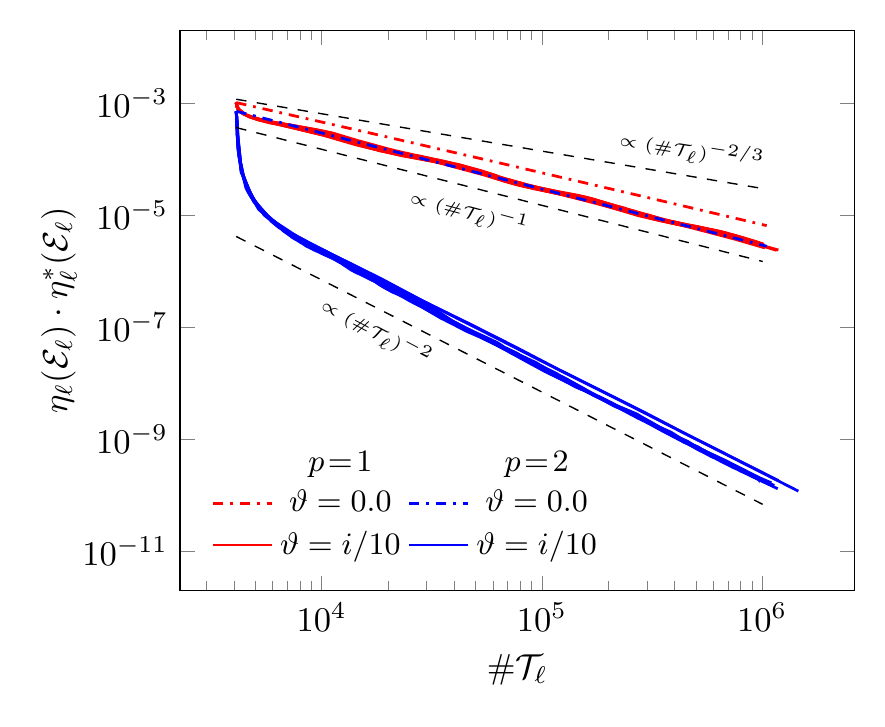}
		\caption{Comparison of estimates for Algorithm~\ref{alg:goafemmarking} (DKS) and different parameters $\vartheta$.
		For the adaptive case $\vartheta \in (0,1)$, we chose $\vartheta = i/10$ with $i=1,\ldots,9$.}
	\end{subfigure}
	\hfill\hphantom{*}
	\caption{Product of error estimates for the solution of problem~\eqref{eq:numerics:problem} with Algorithm~\ref{alg:abstractgoafem} and FEM of order $p=1$ and $p=2$.}
	\label{fig:error-estimator}
\end{figure}

\begin{figure}
	\centering
	\hfill
	\begin{subfigure}[c]{0.22\textwidth}
		\centering
		\includegraphics[trim=70 10 70 10, clip=true, width=\linewidth]{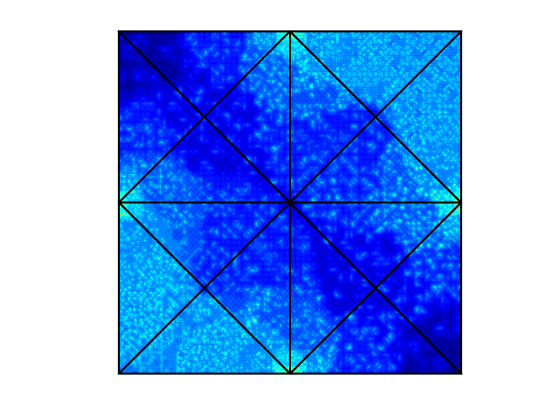}
		\caption{$\#\TT_{22} = 53997$.}
	\end{subfigure}
	\hfill
	\begin{subfigure}[c]{0.22\textwidth}
		\centering
		\includegraphics[trim=70 10 70 10, clip=true, width=\linewidth]{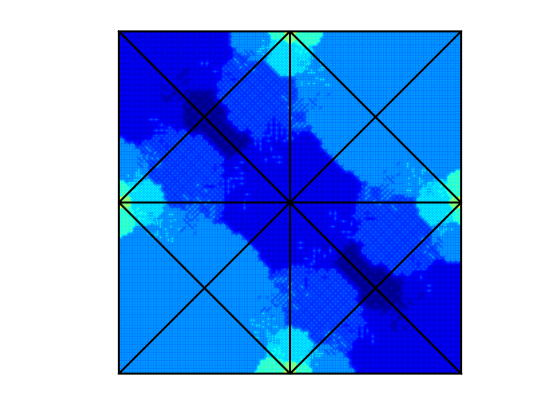}
		\caption{$\#\TT_{18} = 52596$.}
	\end{subfigure}
	\hfill
	\begin{subfigure}[c]{0.22\textwidth}
		\centering
		\includegraphics[trim=70 10 70 10, clip=true, width=\linewidth]{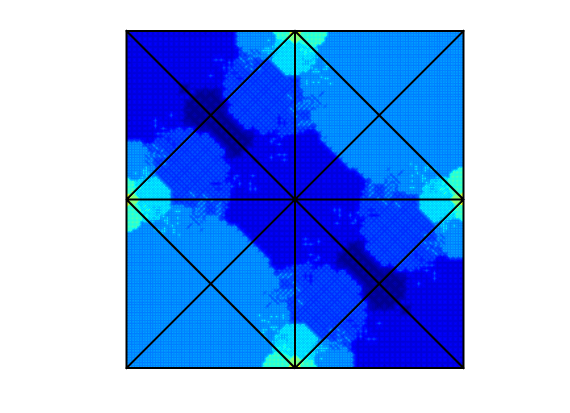}
		\caption{$\#\TT_{9} = 52078$.}
	\end{subfigure}
	\hfill
	\begin{subfigure}[c]{0.22\textwidth}
		\centering
		\includegraphics[trim=70 10 70 10, clip=true, width=\linewidth]{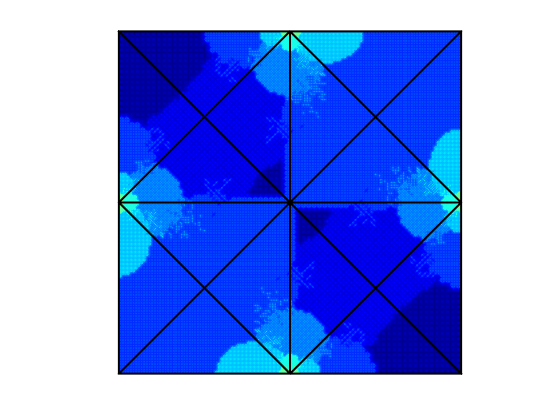}
		\caption{$\#\TT_{10} = 80142$.}
	\end{subfigure}
	\hfill
	\begin{subfigure}[c]{0.06\textwidth}
		\centering
		\includegraphics[trim=350 10 0 10, clip=true, width=\linewidth]{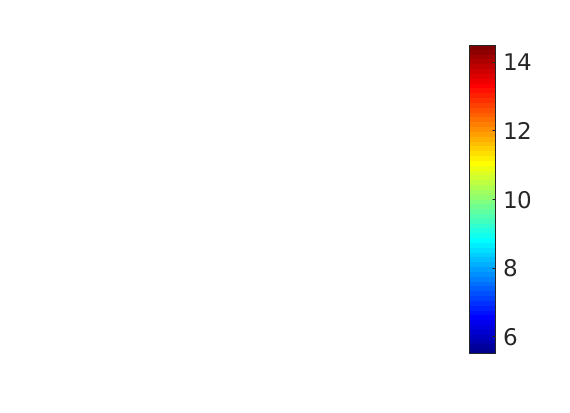}
	\end{subfigure}
	\hfill\hphantom{*}
	\\
	\hfill
	\begin{subfigure}[c]{0.22\textwidth}
		\centering
		\includegraphics[trim=70 10 70 10, clip=true, width=\linewidth]{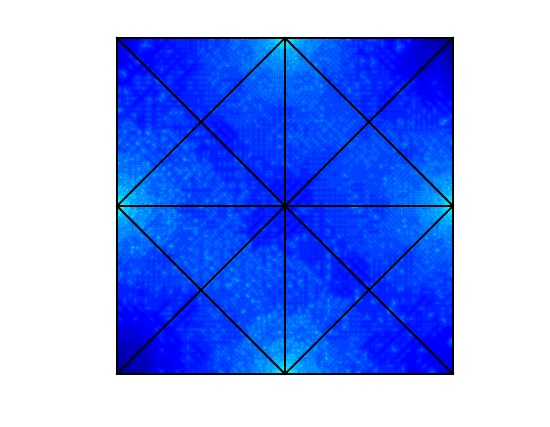}
		\caption{$\#\TT_{33} = 51619$.}
	\end{subfigure}
	\hfill
	\begin{subfigure}[c]{0.22\textwidth}
		\centering
		\includegraphics[trim=70 10 70 10, clip=true, width=\linewidth]{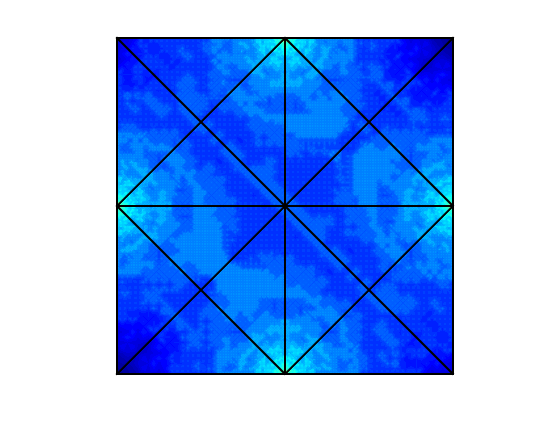}
		\caption{$\#\TT_{24} = 55118$.}
	\end{subfigure}
	\hfill
	\begin{subfigure}[c]{0.22\textwidth}
		\centering
		\includegraphics[trim=70 10 70 10, clip=true, width=\linewidth]{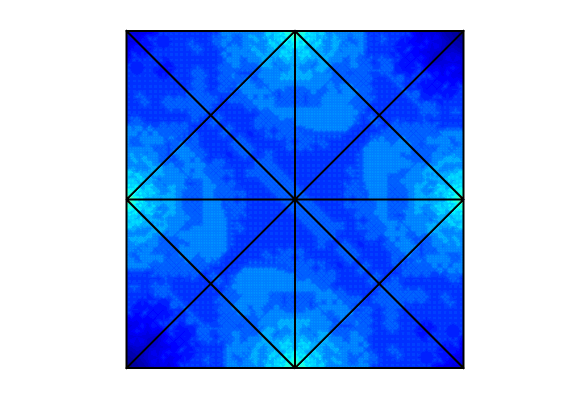}
		\caption{$\#\TT_{12} = 54164$.}
	\end{subfigure}
	\hfill
	\begin{subfigure}[c]{0.22\textwidth}
		\centering
		\includegraphics[trim=70 10 70 10, clip=true, width=\linewidth]{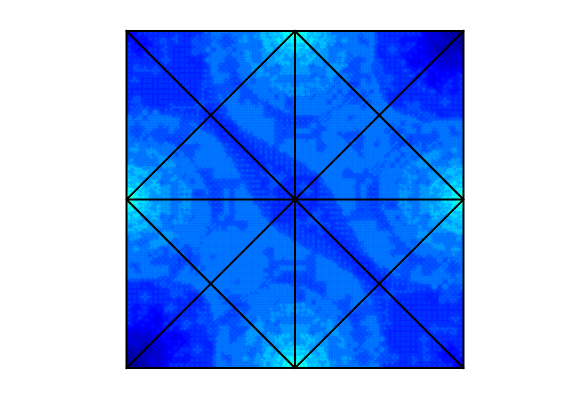}
		\caption{$\#\TT_{13} = 71080$.}
	\end{subfigure}
	\hfill
	\begin{subfigure}[c]{0.06\textwidth}
		\centering
		\includegraphics[trim=350 10 0 10, clip=true, width=\linewidth]{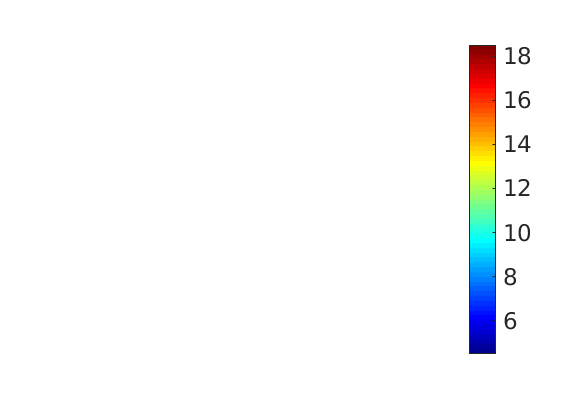}
	\end{subfigure}
	\hfill\hphantom{*}
	\caption{Comparison of meshes resulting from the recursive marking strategy from~\cite{dks16}, \cite{ms2008}, \cite{goafem}, and~\cite{bet2011} (left to right) with polynomial degree $p=1$~(top) and $p=2$~(bottom), and marking parameter $\vartheta = 0.5$.
	The colors show the value of $\log_2(1/|T|)$ for every element $T$, which corresponds to the element's level.}
	\label{fig:mesh-size}
\end{figure}

\bibliographystyle{alpha}
\bibliography{literature}


\end{document}